\tikzstyle{V}=[fill=black,circle,scale=0.2, outer sep = 4pt]
\newtheorem{thm}{Theorem}[section]
\newtheorem{prop}[thm]{Proposition}
\newtheorem{cor}[thm]{Corollary}
\theoremstyle{remark}
\newtheorem{rmk}[thm]{Remark}
\newtheorem{example}[thm]{Example}
\theoremstyle{definition}
\newtheorem{defn}[thm]{Definition}
\newtheorem{hypothesis}[thm]{Hypothesis}
\DeclareMathOperator{\diam}{diam}
\renewcommand{\1}{{\bf 1}}
\newcommand{\bi}{\begin{itemize}}
\newcommand{\ei}{\end{itemize}}
\newcommand{\be}{\begin{enumerate}}
\newcommand{\ee}{\end{enumerate}}
\newcommand{\C}{\mathbb{C}}
\renewcommand{\H}{\mathcal{H}}
\newcommand{\K}{\mathcal{K}}
\newcommand{\R}{\mathbb{R}}
\newcommand{\N}{\mathbb{N}}
\providecommand{\keywords}[1]{{\textit{Key words and phrases:}} #1}
\providecommand{\classification}[1]{{\textit{2010 Mathematics Subject Classification:}} #1}
\def\IoIIdimdots(#1/#2/#3,#4){\node at (#1,#4) {$.$};\node at (#2,#4) {$.$};\node at (#3,#4) {$.$};}
\def\IIoIIdimdots(#1,#2/#3/#4){\node at (#1,#2) {$.$};\node at (#1,#3) {$.$};\node at (#1,#4) {$.$};}
\def\IoIIIdimdots(#1/#2/#3,#4,#5){\node at (#1,#4,#5) {$.$};\node at (#2,#4,#5) {$.$};\node at (#3,#4,#5) {$.$};}
\def\IIoIIIdimdots(#1,#2/#3/#4,#5){\node at (#1,#2,#5) {$.$};\node at (#1,#3,#5) {$.$};\node at (#1,#4,#5) {$.$};}
\def\IIIoIIIdimdots(#1,#2,#3/#4/#5){\node at (#1,#2,#3) {$.$};\node at (#1,#2,#4) {$.$};\node at (#1,#2,#5) {$.$};}
\begin{document}

\title{Wavelets and spectral triples for higher-rank graphs}

\author{Carla Farsi, Elizabeth Gillaspy, Antoine Julien, Sooran Kang, and Judith Packer}

\date{\today}

\maketitle

\begin{abstract}

In this paper, we present two new ways to associate a spectral triple to a higher-rank graph $\Lambda$.  Moreover, we prove that these spectral triples are intimately connected to the wavelet decomposition of the infinite path space of $\Lambda$ which was introduced by Farsi, Gillaspy, Kang, and Packer in 2015.  We first introduce the concept of stationary $k$-Bratteli diagrams, to associate a family of ultrametric Cantor sets to a finite, strongly connected higher-rank graph $\Lambda$.  Then we show that {under mild hypotheses,} the  Pearson-Bellissard  spectral triples of  such Cantor sets have a regular $\zeta$-function, whose abscissa of convergence agrees with the Hausdorff dimension of the Cantor set, and that {the measure $\mu$ induced by the associated Dixmier trace  agrees with the measure $M$ on the infinite path space $\Lambda^\infty$ of $\Lambda$ which was introduced by an Huef, Laca, Raeburn, and Sims.  Furthermore, we prove that $\mu = M$ is a rescaled version of the Hausdorff measure of the ultrametric Cantor set.}

 From work of Julien and Savinien, we know that for $\zeta$-regular Pearson-Bellissard spectral triples, the eigenspaces of the associated Laplace-Beltrami operator constitute an orthogonal decomposition of $L^2(\Lambda^\infty, \mu)$; we show that this orthogonal decomposition refines the wavelet decomposition of Farsi et al. In addition, we generalize a spectral triple of Consani and Marcolli from Cuntz-Krieger algebras to higher-rank graph $C^*$-algebras, and prove that the wavelet decomposition of Farsi et al.~describes the eigenspaces of its Dirac operator. 
\end{abstract}

\classification{46L05, 46L87, 58J42.}

\keywords{Spectral triple, wavelets, higher-rank graph, Laplace-Beltrami operator,  Hausdorff measure, $\zeta$-function, Dixmier trace, $k$-Bratteli diagram, ultrametric Cantor set.}

\tableofcontents

\section{Introduction}
Both spectral triples and wavelets are algebraic structures which encode geometrical information.  However, to our knowledge our earlier paper \cite{FGJKP1} was the first to highlight a connection between wavelets and spectral triples.  In this paper, we expand the correspondence established in \cite{FGJKP1} between wavelets and spectral triples for the Cuntz algebras $\mathcal O_N$ to the setting of higher-rank graphs.  We also introduce a new spectral triple for the $C^*$-algebra $C^*(\Lambda)$ of a higher-rank graph $\Lambda$, and establish its compatibility with the wavelet decomposition of \cite{FGKP}.

The objective of the initial work on wavelet analysis, pioneered by    Mallat \cite{mallat},  Meyer \cite{meyer}, and  Daubechies \cite{daubechies} in the late 1980's, was to identify orthonormal bases or frames for     $L^2(\mathbb R^n)$  which behaved well under compression algorithms, for the purpose of signal or image storage.  A few years later,   Jonsson \cite{jonsson} and Strichartz \cite{strichartz} began to study orthonormal bases of more general Hilbert spaces $L^2(X, \mu)$ which arise from dilations and translations of finite sets of functions in $L^2(X, \mu)$.  
 When $X$ is a fractal space, Jonsson and Strichartz' wavelets reflect the self-similar structure of $X$; these fractal wavelets were the inspiration for the wavelet decompositions associated to graphs \cite{marcolli-paolucci} and higher-rank graphs \cite{FGKP} which are {the wavelets we focus on} in this paper. 

 Wavelet analysis has many applications in various areas of mathematics, physics and engineering. For example, it  has been used to study $p$-adic spectral analysis \cite{kozyrev}, pseudodifferential operators and dynamics on ultrametric spaces \cite{khrennikov-kozyrev,khrennikov-kozyrev2}, and the theory of quantum gravity \cite{ellis-mavromatos-nanopoulos-sakharov,battle-federbush-uhilg}.

 The idea of a spectral triple was  introduced by Connes in \cite{connes} as the noncommutative generalization  of  a compact Riemannian manifold.  A spectral triple consists of a representation $\pi$ of a pre-$C^*$-algebra $\mathcal{A}$ on Hilbert space $\mathcal{H}$, together with a Dirac-type operator $D$ on $\mathcal{H}$, which satisfy certain commutation relations.
In the case when $\mathcal{A} = C^\infty(X)$ is the algebra of smooth  functions on a compact spin manifold, Connes showed \cite{connes-reconstruction} that the algebraic structure of the associated spectral triple suffices to reconstruct the Riemannian metric on $X$. 

In addition to spin manifolds, Connes studied spectral triples for the triadic Cantor set and Julia set in \cite{connes}.   Shortly thereafter, Lapidus \cite{lapidus} suggested studying spectral triples $(\mathcal{A}, \H, D)$ where $\mathcal{A} $ is a commutative algebra of functions on a fractal space $X$, and 
investigating which aspects of the geometry of $X$ are recovered from the spectral triple.  Such spectral triples usually recover the fractal dimension of $X$, and in particularly nice cases \cite{pearson-bellissard, christensen-ivan-lapidus, guido-isola, lapidus-sarhad} they also recover the metric structure of the fractal space.

As we show in Theorem \ref{thm:JS-wavelets} below, the spectral triples $(\mathcal{A}, \H, D)$  of Pearson and Bellissard \cite{pearson-bellissard} are closely related to the wavelets of \cite{marcolli-paolucci,FGKP} (see Equation \eqref{eq:wavelet-intro} below).
 Here,  $\mathcal{A}  = C_{Lip}(X)$ is the algebra of Lipschitz continuous functions on an ultrametric Cantor set $X$ constructed from a weighted tree. Since the wavelet decompositions of \cite{marcolli-paolucci, FGKP} involve the infinite-path space $\Lambda^\infty$ of a graph or higher-rank graph $\Lambda$, we first explain in Section \ref{sec:background}  below how to construct a family of ultrametric Cantor sets $\{ (\Lambda^\infty, d_\delta): \delta \in (0,1)\}$ from a given higher-rank graph $\Lambda$. A $k$-dimensional generalization of directed graphs, higher-rank graphs (also called $k$-graphs) were introduced by Kumjian and Pask in \cite{kp} to provide computable, combinatorial examples of $C^*$-algebras.
The combinatorial character of $k$-graph $C^*$-algebras has facilitated the analysis of their structural properties,  such as  simplicity and ideal structure \cite{rsy2, robertson-sims, davidson-yang-periodicity, kang-pask, ckss}, quasidiagonality \cite{clark-huef-sims} and KMS states \cite{aHLRS, aHLRS1, aHKR}.  In particular, results such as \cite{spielberg-kirchberg,bnrsw,bcfs,prrs} show that  higher-rank graphs often provide concrete examples of $C^*$-algebras which are relevant to Elliott's  classification program for simple separable nuclear $C^*$-algebras.
 
Having laid the groundwork in Section \ref{sec:background} for associating a Pearson-Bellissard spectral triple to a $k$-graph, we study the $\zeta$-function and Dixmier trace of this spectral triple in Section \ref{sec:zeta-regular}. Inspired by Julien and Savinien \cite{julien-savinien}, we use Bratteli diagrams (more precisely, the stationary $k$-Bratteli diagrams which we introduce in Definition \ref{def-k-brat-diagrm}) to facilitate this analysis.  
  Theorem \ref{thm:abscissa-conv} establishes that the $\zeta$-function of the spectral triple associated to the ultrametric Cantor set $(\Lambda^\infty, d_\delta)$ has abscissa of convergence $\delta$; we later show in Corollary \ref{cor:Hausdorff-dim} that $\delta$ is also the Hausdorff dimension of $(\Lambda^\infty, d_\delta)$.  Weaker analogues of both of these results were obtained in \cite{pearson-bellissard}; we relied heavily on the Bratteli diagram structure to obtain our stronger results.
  
The structure of the stationary $k$-Bratteli diagrams also enables us to prove (in Theorem \ref{thm:dixmier-measure}) that  in our setting, the Pearson-Bellissard spectral triples satisfy the crucial hypothesis of $\zeta$-regularity invoked in \cite{pearson-bellissard}.  This implies the existence, for each $\delta \in (0,1)$, of a Dixmier trace $\mu_\delta$ on $\mathcal A = C_{Lip}(\Lambda^\infty)$, and an associated probability measure (also denoted $\mu_\delta$) on $\Lambda^\infty$.  Corollary \ref{cor:dixmier-aHLRS} shows that the probability measures $\mu_\delta$ agree with the Borel probability measure $M$ on $\Lambda^\infty$ which was identified in Proposition 8.1 of \cite{aHLRS} and which we used in \cite{FGKP} to construct a wavelet decomposition of $L^2(\Lambda^\infty, M)$.  Section \ref{sec:zeta-regular} concludes with Theorem \ref{thm:anHuef-Hausdorff-meas}, which proves that, after rescaling, the Hausdorff measure of $(\Lambda^\infty, d_\delta)$ also agrees with $M$ and $\mu_\delta$.

  Section \ref{sec-wavelets-as-eigenfunctions} presents the promised connection between the Pearson-Bellissard spectral triples and the wavelet decomposition of $L^2(\Lambda^\infty, M)$ from \cite{FGKP}.  In \cite{FGKP}, four of the authors of the current paper constructed an orthogonal decomposition 
  \begin{equation}
\label{eq:wavelet-intro}  
  L^2(\Lambda^\infty, M) = \mathscr V_0 \oplus \bigoplus_{n\geq 0} \mathcal W_n,\end{equation}
  where each subspace\footnote{The subspaces denoted in this paper by $\mathcal W_n$ were labeled $\mathcal{W}_{j,\Lambda}$ for $j\in \N$ in Theorem~4.2 of \cite{FGKP}.} $\mathcal W_n = \{ S_\lambda f: f \in \mathcal W_0, \lambda \in \Lambda^{(n, \ldots, n)}\}$ is constructed from $\mathcal W_0$ by means of generalized ``scaling and translation'' operators which reflect the (higher-rank) graph structure of $\Lambda$.  The geometric structure of this orthogonal decomposition led the authors of \cite{FGKP} to label it a wavelet decomposition, following Marcolli and Paolucci \cite{marcolli-paolucci}, Jonsson \cite{jonsson} and Strichartz \cite{strichartz}.
  
  As Julien and Savinien show in Theorem~4.3 of \cite{julien-savinien}, the Pearson-Bellissard spectral triples give rise to another orthogonal decomposition of $L^2(\Lambda^\infty, M)$.  To be precise, given an ultrametric Cantor set $(X, d)$ whose associated Pearson-Bellissard spectral triple is  $\zeta$-regular, write $\mu$ for the associated Dixmier trace.  Pearson and Bellissard constructed in \cite{pearson-bellissard} an associated family $\{\Delta_s: s \in \R\}$ of Laplace-Beltrami operators on $L^2(X, \mu)$.  Theorem~4.3 of \cite{julien-savinien} shows that the eigenspaces of $\Delta_s$ are independent of $s \in \R$ and they form an orthogonal decomposition of $L^2(X, \mu)$; moreover, when $X$ arises from a  Bratteli diagram, the eigenspaces $E_\gamma$ of $\Delta_s$ are labeled by the finite paths $\gamma$ in the Bratteli diagram.
  
  Theorem \ref{thm:JS-wavelets} of the current paper shows that these two orthogonal decompositions are compatible.  More precisely, it proves that 
  \[ \mathcal W_n = \bigoplus_{nk \leq |\gamma| < (n+1)k} E_\gamma,\]
  so the eigenspaces of the Laplace-Beltrami operators refine the wavelet decomposition of \cite{FGKP}.  The remainder of Section \ref{sec-wavelets-as-eigenfunctions} presents some variations of the wavelet decomposition of \cite{FGKP} which are also related to the Pearson-Bellissard spectral triples and their associated Laplace-Beltrami operators.
  
  Inspired by this close relationship between spectral triples and wavelets on the infinite path space of a $k$-graph, one might (naturally) ask whether there is a connection between the wavelets of \cite{marcolli-paolucci} or \cite{FGKP} and any of the known noncommutative spectral triples \cite{pask-rennie, consani-marcolli, goffeng-mesland, goffeng-mesland-ON}
associated to graph $C^*$-algebras. 
Of the spectral triples listed above, only the one given by  Consani and Marcolli  in \cite{consani-marcolli} uses $L^2(\Lambda^\infty, M)$ for the Hilbert space. 

We conclude this paper in Section \ref{sec-Consani-Marcolli-spectral-triples-for-k-graphs} by establishing a link between  Consani-Marcolli type spectral triples for graph or $k$-graph $C^*$-algebras and \cite{FGKP}'s wavelet decomposition of $L^2(\Lambda^\infty, M)$. 
More precisely, given a $k$-graph $\Lambda$, we  construct in Theorem~\ref{thm-Consani-Marcolli-spectral-triples-k-graphs} a spectral triple $(\mathcal{A}_\Lambda, L^2(\Lambda^\infty, M), D)$, where $\mathcal A_\Lambda$ is a dense (noncommutative) subalgebra of $C^*(\Lambda)$.  Theorem \ref{thm:CM-Dirac-wavelets} then establishes that the eigenspaces of the Dirac operator $D$ of this spectral triple agree with the wavelet decomposition of \cite{FGKP}.

\subsection*{Acknowledgments} We would like to thank Palle Jorgensen for helpful discussions about Bratteli diagrams associated to higher-rank graphs.  E.G.~was partially supported by the SFB 878 ``Groups, Geometry, and Actions'' of the Westf\"alische-Wilhelms-Universit\"at M\"unster. J.P.~was  partially supported by a grant from the Simons Foundation (\#316981).

\section{Higher-rank graphs and ultrametric Cantor sets}
\label{sec:background}
 In this section, we review the basic definitions and results that we will need about directed graphs, higher-rank graphs, (weighted/stationary) Bratteli diagrams, infinite path spaces, and (ultrametric) Cantor sets.  Throughout this article, $\N$ will denote the non-negative integers.

\subsection{Bratteli diagram}

A  \emph{directed graph} is given by a quadruple $E = (E^0, E^1, r, s)$, where $E^0$ is the set of vertices of the graph, $E^1$ is the set of edges, and $r, s: E^1 \to E^0$ denote the range and source of each edge.
A vertex $v$ in a directed graph $E$ is a \emph{sink} if $s^{-1}(v) = \emptyset;$ we say $v$ is a \emph{source} if $r^{-1}(v) = \emptyset$.

\begin{defn}\label{def:bratteli-diagram}\cite{bezuglyi-jorgensen}
A \emph{Bratteli diagram} $\mathcal{B}=(\mathcal{V}, \mathcal{E})$ is a directed graph with vertex set $\mathcal{V}= \bigsqcup_{n \in \N} \mathcal{V}_n$, and edge set $\mathcal{E} = \bigsqcup_{n\geq 1} \mathcal{E}_n$, where $\mathcal{E}_n$ consists of edges whose source vertex lies in $\mathcal{V}_{n}$ and whose range vertex lies in $\mathcal{V}_{n-1}$, and  $\mathcal{V}_n$ and $\mathcal{E}_n$ are finite sets for all $n$.

For a Bratteli diagram $\mathcal{B}=(\mathcal{V}, \mathcal{E})$, define a sequence of adjacency matrices $A_n=(f^n(v,w))_{v,w}$ of $\mathcal{B}$ for $n\ge 1$, where
\[
f^n(v,w)=\#\Big( \{ e\in \mathcal{E}_{n} : r(e)=v\in \mathcal{V}_{n-1}, \,s(e)=w \in \mathcal{V}_{n}\} \Big),
\]
where by $\#(Q)$ we denote the cardinality of the set $Q$.
A Bratteli diagram is \emph{stationary} if $A_n=A_1=:A$ are the same for all $n\ge 1$.  We say that $\eta$ is a \emph{finite} path of $\mathcal{B}$ if there exists $m\in \N$ such that $\eta=\eta_1\dots \eta_m$ for $\eta_i\in \mathcal{E}_i$, and in that case the \emph{length} of $\eta$, denoted by $| \eta|$, is $m$.
\end{defn}

\begin{rmk}
In the literature, Bratteli diagrams traditionally have $s(\mathcal{E}_n) = \mathcal{V}_{n}$ and $r(\mathcal{E}_n) = \mathcal{V}_{n+1}$; our edges point the other direction for consistency with the standard conventions for higher-rank graphs and their $C^*$-algebras.  

It is also common in the literature to require $| \mathcal{V}_0 | = 1$ and to call this vertex the \emph{root} of the Bratteli diagram; we will NOT invoke this hypothesis in this paper.
\end{rmk}

\begin{defn}
\label{def-k-brat-diagrm-inf-path-space} 
Given a Bratteli diagram $\mathcal{B}=(\mathcal{V}, \mathcal{E})$,  denote by $X_{\mathcal{B}}$ the set of all of its infinite paths:
\[ X_{\mathcal{B}} = \{(x_n)_{n \ge 1} : x_n \in \mathcal{E}_n \text{ and } s(x_n) = r(x_{n+1})\;\;\text{for $n\ge 1$}\}.
\]
 For each finite path $\lambda = \lambda_1 \lambda_2 \cdots \lambda_\ell$  in $\mathcal{B}$ with $r(\lambda) \in \mathcal{V}_0$, $\lambda_i\in \mathcal{E}_i$ and $|\lambda| = \ell$, 
 define the \emph{cylinder set} $[\lambda]$ by 
\[ [ \lambda ] = \{ x = (x_n)_{n \ge 1} \in X_{\mathcal{B}}:  x_i = \lambda_i \;\;\text{for}\;\;  1 \leq i \leq \ell\}.\]
The collection $\mathcal{T}$ of all cylinder sets forms a compact open sub-basis for a locally compact Hausdorff topology on $X_{\mathcal{B}}$ and cylinder sets are clopen; we will always consider $X_{\mathcal{B}}$ with this topology.
\end{defn}

The following proposition will tell us
when $X_{\mathcal{B}}$ is a {\em Cantor set}; that is, a totally disconnected, compact, perfect topological space.

\begin{prop} (Lemma 6.4. of  \cite{amini-elliott-golestani})
\label{pr:inf-path-cantor}
 Let $\mathcal{B} = (\mathcal{V},\mathcal{E})$ be a Bratteli diagram such that $\mathcal{B}$ has no sinks outside of $\mathcal{V}_0$, and no sources. Then $X_{\mathcal{B}}$ is a totally disconnected compact Haudorff space, and the following statements are equivalent:
\begin{enumerate}
\item The infinite path space $ X_{\mathcal{B}}$ of $\mathcal{B}$  is a  Cantor set;
\item For each infinite path $x = (x_1, x_2, ....)$ in $ X_{\mathcal{B}}$ and each $n \geq 1$ there is
an infinite path $y = (y_1, y_2, ....)$  with 
\[
x \not= y \ \text{and}\  x_k = y_k \text{ for } 1 \leq k \leq n;
\]
\item  For each $n \in \N$ and each $v \in \mathcal{V}_n$ there is $m \geq n$ and $w \in \mathcal{V}_m$ such that there is a path from $w$ to $v$ and 
\[
\#(r^{-1}(\{w\})) \geq 2.
\]
\end{enumerate}
\end{prop}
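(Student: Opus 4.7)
My plan is to first establish the preliminary topological claim and then handle the two biconditionals separately. For the preliminary claim, I would embed $X_\mathcal{B}$ as a closed subspace of the product $\prod_{n \ge 1} \mathcal{E}_n$, with each factor carrying the discrete topology. Since each $\mathcal{E}_n$ is finite, Tychonoff gives compactness of the product, and the path-compatibility conditions $s(x_n) = r(x_{n+1})$ are equations with values in a discrete target, so they cut out a closed subset. The product of discrete spaces is Hausdorff and totally disconnected, and both properties pass to subspaces; equivalently, the cylinder sets $[\lambda]$ form a clopen basis for the topology.

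For (1) $\Leftrightarrow$ (2), I would observe that a Cantor set is precisely a totally disconnected compact Hausdorff space that is also \emph{perfect}, meaning every point is an accumulation point. Since $\{[x_1 \cdots x_n]\}_{n \ge 1}$ is a neighborhood basis of an infinite path $x \in X_\mathcal{B}$, the point $x$ is an accumulation point exactly when every cylinder $[x_1 \cdots x_n]$ contains some $y \ne x$; this is verbatim condition (2). So, given the preliminary claim, (1) and (2) are equivalent.

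For (2) $\Rightarrow$ (3) I would argue contrapositively: suppose (3) fails and fix a witnessing $n$ and $v \in \mathcal{V}_n$. Then every $w \in \mathcal{V}_m$ ($m \ge n$) admitting a path to $v$ satisfies $\#(r^{-1}(w)) \le 1$, and the ``no sources'' hypothesis upgrades this to $\#(r^{-1}(w)) = 1$. Using ``no sinks outside $\mathcal{V}_0$'', I would fix any finite path $x_1 \cdots x_n$ down into $\mathcal{V}_0$ with $s(x_n) = v$. The forward extension is then forced uniquely at each step: $x_{n+1}$ is the sole element of $r^{-1}(v)$, and inductively $x_{n+k+1}$ is the sole element of $r^{-1}(s(x_{n+k}))$, since $s(x_{n+k})$ is still connected to $v$ via $x_{n+1} \cdots x_{n+k}$. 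The resulting infinite path $x$ satisfies $[x_1 \cdots x_n] = \{x\}$, contradicting (2).

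For (3) $\Rightarrow$ (2), given $x$ and $n$, I would apply (3) to $v = s(x_n)$ to obtain $m \ge n$, a vertex $w \in \mathcal{V}_m$, a path $\beta_{n+1} \cdots \beta_m$ from $w$ down to $v$, and two distinct edges $e_1, e_2 \in r^{-1}(w)$. I would then define $y$ by $y_k = x_k$ for $k \le n$, $y_k = \beta_k$ for $n < k \le m$ (empty range if $m = n$), choose $y_{m+1} \in \{e_1, e_2\} \setminus \{x_{m+1}\}$ (nonempty since at most one of $e_1, e_2$ equals $x_{m+1}$), and complete the tail by repeatedly invoking ``no sinks outside $\mathcal{V}_0$''. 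This yields $y \in [x_1 \cdots x_n]$ with $y \ne x$, since $y_{m+1}$ and $x_{m+1}$ are distinct edges of $\mathcal{E}_{m+1}$. The main bookkeeping hazard throughout is the direction convention: edges in $\mathcal{E}_n$ point from level $n$ down to level $n-1$, so a ``path from $w$ to $v$'' in (3) actually traverses decreasing level indices. Once that convention is pinned down, each individual step is essentially routine.
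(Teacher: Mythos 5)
The paper does not prove this proposition at all --- it is imported verbatim as Lemma~6.4 of the cited reference \cite{amini-elliott-golestani} --- so there is no in-paper argument to compare against. Your proof is the standard one and is essentially correct: the embedding of $X_{\mathcal{B}}$ as a closed subspace of $\prod_{n\ge 1}\mathcal{E}_n$ gives the preliminary topological claim, (1)$\Leftrightarrow$(2) is exactly the statement that perfectness can be tested on the cylinder neighborhood basis, and both directions of (2)$\Leftrightarrow$(3) are handled correctly, including the key point in (3)$\Rightarrow$(2) that at most one of the two edges in $r^{-1}(w)$ can coincide with $x_{m+1}$.

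One small correction: in the last step of (3)$\Rightarrow$(2), completing the tail of $y$ beyond level $m+1$ requires, at each vertex $u\in\mathcal{V}_j$ reached, an edge in $\mathcal{E}_{j+1}$ with \emph{range} $u$, i.e.\ $r^{-1}(u)\neq\emptyset$. That is the ``no sources'' hypothesis, not ``no sinks outside $\mathcal{V}_0$'' as you wrote; with the paper's conventions, ``no sinks outside $\mathcal{V}_0$'' is what lets you walk \emph{down} to $\mathcal{V}_0$ (as you correctly used it in the contrapositive of (2)$\Rightarrow$(3)), while ``no sources'' is what lets you extend infinite paths \emph{upward}. Since both hypotheses are available, this is a mislabeling rather than a gap, but given that you flagged the direction convention as the main hazard, it is worth fixing.
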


\subsection{Higher-rank graphs and stationary $k$-Bratteli diagrams}
\begin{defn}
\label{def-k-brat-diagrm}
Let $A_1, A_2, \cdots, A_k$ be  $N \times N$ matrices with non-negative integer entries.  The \emph{stationary $k$-Bratteli diagram} associated to the matrices $A_1, \ldots, A_k$, which we will call  $ \mathcal{B}_{(A_j)_{j=1,...,k} }$, is given by a filtered set of vertices $\mathcal{V} = \bigsqcup_{n\in \N} \mathcal{V}_n$ and a filtered set of edges $\mathcal{E} = \bigsqcup_{n\geq 1} \mathcal{E}_n$, where the edges in $\mathcal{E}_n$ go from $\mathcal{V}_{n}$ to $\mathcal{V}_{n-1}$, such that:
\begin{itemize}
\item[(a)] For each $n \in \N$, $\mathcal{V}_n$ consists of $N$ vertices, which we will label $1, 2, \ldots, N$. 

\item[(b)]  When $ n \equiv i \pmod{k}$, there are $A_i(p,q)$ edges whose range is the vertex $p$ of $\mathcal{V}_{n-1}$ and whose source is the vertex $q$ of $\mathcal{V}_{n}$. 
\end{itemize} 

\end{defn}

In other words, the matrix $A_1$ determines the edges with source  in $\mathcal{V}_1$ and range  in $\mathcal{V}_0$; then the matrix $A_2$ determines the edges with source in $\mathcal{V}_2$ and range  in $\mathcal{V}_1$; etc.  The matrix $A_k$ determines the edges with source in $\mathcal{V}_{k}$ and range in $\mathcal{V}_{k-1}$, and the matrix $A_1$ determines the edges with range in $\mathcal{V}_{k}$ and source in $\mathcal{V}_{k+1}$.

Note that a stationary 1-Bratteli diagram is often denoted a \emph{stationary Bratteli diagram} in the literature (cf.~\cite{bezuglyi-jorgensen, julien-savinien}).

Just as a directed graph has an associated adjacency matrix $A$ which also describes a stationary Bratteli diagram $\mathcal{B}_A$, the higher-dimensional generalizations of directed graphs known as \emph{higher-rank graphs} or $k$-graphs give us $k$ commuting matrices $A_1, \ldots, A_k$ and hence a stationary $k$-Bratteli diagram.  

  We use the standard terminology and notation for higher-rank graphs, which we review below for the reader's convenience.  

\begin{defn}
\label{def:k-graph}\cite{kp}
    A \emph{$k$-graph} is a countable small category $\Lambda$ equipped with a degree functor\footnote{We view $\N^k$ as a category with one object, namely $0$, and with composition of morphisms given by addition.} $d: \Lambda \to \N^k$ satisfying the \emph{factorization property}: whenever $\lambda$ is a morphism in $\Lambda$ such that $d(\lambda) = m+n$, there are unique morphisms $\mu, \nu \in \Lambda$ such that $d(\mu) = m, d(\nu) = n$, and $\lambda = \mu \nu$.   
    
We use the arrows-only picture of category theory; thus, $\lambda \in \Lambda$ means that $\lambda$ is a morphism in $\Lambda$.  
For $ n \in \N^k$, we write 
\[ \Lambda^n := \{ \lambda \in \Lambda: d(\lambda) = n\} .\]
When $n=0$,  $\Lambda^0$ is the set of objects of $\Lambda$, which we also refer to as the \emph{vertices} of $\Lambda$. 

Let  $r, s: \Lambda \to \Lambda^0$  identify the range and source of each morphism, respectively.  For $v \in \Lambda^0$ a vertex, we define 
\[v\Lambda^n := \{\lambda \in \Lambda^n : r(\lambda) = v\} \text{ and } \Lambda^n w := \{ \lambda \in \Lambda^n: s(\lambda) = w\}.\]
We say that $\Lambda$ is \emph{finite} if $\#(\Lambda^n )< \infty$ for all $n \in \N^k$, and  we say $\Lambda$ is \emph{source-free} or \emph{has no sources} if $\#(v\Lambda^n ) > 0$ for all $v \in \Lambda^0$ and   $n \in \N^k$.

For $1 \leq i \leq k$, write $e_i$ for the $i$th standard basis vector of $\N^k$, and define a matrix $A_i \in M_{\Lambda^0}(\N)$ by 
\[ A_i(v, w) = \#( v \Lambda^{e_i} w ).\]
We call $A_i$ the \emph{$i$th adjacency matrix} of $\Lambda$.  Note that the factorization property implies that the matrices $A_i$ commute.
\end{defn}

Despite their formal definition as a category, it is often useful to think of $k$-graphs as $k$-dimensional generalizations of directed graphs.  In this interpretation, $\Lambda^{e_i}$ is the set of ``edges of color $i$'' in $\Lambda$.  The factorization property implies that each $\lambda \in \Lambda$ can be written as a concatenation of edges in the following sense: A morphism $\lambda \in \Lambda$ with $d(\lambda) = (n_1, n_2, \ldots, n_k)$ can be thought of as a $k$-dimensional hyper-rectangle of dimension $n_1 
\times n_2 \times \cdots \times n_k$.  Any minimal-length lattice path in $\N^k$ through the rectangle lying between 0 and $(n_1, \ldots, n_k)$ corresponds to a choice of how to order the edges making up $\lambda$, and hence to a unique decomposition or ``factorization'' of $\lambda$.  For example, the lattice path given by walking in straight lines from $0$ to $(n_1, 0, \ldots, 0)$ to $(n_1, n_2, 0, \ldots, 0)$ to $(n_1, n_2, n_3, 0, \ldots, 0)$, and so on,   corresponds to the factorization of  $\lambda$ into edges of color 1, then edges of color 2, then edges of color 3, etc.

For any directed graph $E$, the category of its finite paths $\Lambda_E$ is a 1-graph; the degree functor $d: \Lambda_E \to \N$ takes a finite path $\lambda$ to its length $|\lambda|$.  Example \ref{ex:Omega-k} below gives a less trivial example of a $k$-graph.  The $k$-graphs $\Omega_k$ of Example \ref{ex:Omega-k}
are also fundamental to the definition of the space of infinite paths in a $k$-graph.

\begin{example}
For $k\ge 1$, let $\Omega_k$ be the small category with 
\[ \text{Obj}\, (\Omega_k) = \N^k, \  \text{Mor} \, (\Omega_k) = \{ (m, n) \in \N^k \times \N^k: m \leq n \}, \quad \ r(m,n) = m, \ s(m,n) = n.\]
If we define $d:\Omega_k\to \N^k$ by $d(m,n)=n-m$, then $\Omega_k$ is a $k$-graph with degree functor $d$.
\label{ex:Omega-k}
\end{example}

\begin{defn} 
\label{def:inf-path-space}
Let $\Lambda$ be a $k$-graph.
An \emph{infinite path} of $\Lambda$ is a $k$-graph morphism 
\[
x: \Omega_k \to \Lambda;
\]
 we write $\Lambda^\infty$ for the set of infinite paths in $\Lambda$.  For each $p \in \N^k$, we have a map $\sigma^p: \Lambda^\infty \to \Lambda^\infty$ given by 
  \[
  \sigma^p(x)(m,n) = x(m+p, n+p)
  \]
  for $x\in \Lambda^\infty$ and $(m,n)\in \Omega_k$.
  
\end{defn}
 
 \begin{rmk} \label{rmk:S2_shift}
 \begin{itemize}
\item[(a)] Given $x\in \Lambda^\infty$, we often write $r(x):=x(0)=x(0,0)$ for the terminal vertex of $x$. This convention means that an infinite path has a range but not a source. 

We equip $\Lambda^\infty$ with the topology generated by the sub-basis $\{[\lambda]: \lambda \in \Lambda\}$ of compact open sets, where 
\[ [\lambda] = \{x\in \Lambda^\infty: x(0, d(\lambda)) = \lambda\}.\]
Note that we use the same notation for a cylinder set of $\Lambda^\infty$ and a cylinder set of $X_{\mathcal{B}}$ in Definition~\ref{def-k-brat-diagrm-inf-path-space} since $\Lambda^\infty$ is homeomorphic to $X_{\mathcal{B}_\Lambda}$ for a finite, source-free $k$-graph $\Lambda$. See the details in Proposition~\ref{pr:kgraph-bratteli-inf-path-spaces}.
Remark 2.5 of \cite{kp} establishes that, with this topology, $\Lambda^\infty$ is a locally compact Hausdorff space.
 
\item[(b)] For any $\lambda \in \Lambda$ and any $x \in \Lambda^\infty$ with $r(x) = s(\lambda)$, we write $\lambda x$ for the unique infinite path $y \in \Lambda^\infty$ such that $y(0, d(\lambda)) = \lambda$ and $\sigma^{d(\lambda)}(y) = x$.  
If $d(\lambda) = p$, the maps $\sigma^{p}$ and $\sigma_\lambda := x \mapsto \lambda x$ are local homeomorphisms which are mutually inverse: 
\[\sigma^p \circ \sigma_\lambda = id_{[s(\lambda)]}, \quad \sigma_\lambda \circ \sigma^p = id_{[\lambda]},\]
although the domain of $\sigma^p$ is $\Lambda^\infty \supsetneq [\lambda]$. 

Informally, one should think of $\sigma^p$ as ``chopping off'' the initial segment of length $p$, and the map $x \mapsto \lambda x$ as ``gluing $\lambda$ on'' to the front of $x$.  By ``front'' and ``initial segment'' we mean the range of $x$, since an infinite path has no source.

\end{itemize}
\end{rmk}

We can now state precisely the connection between $k$-graphs and stationary $k$-Bratteli diagrams.

\begin{prop}
\label{pr:kgraph-bratteli-inf-path-spaces}
Let $\Lambda$ be a finite, source-free $k$-graph with adjacency matrices $A_1, \ldots, A_k$.  Denote by $\mathcal{B}_\Lambda$ the stationary $k$-Bratteli diagram associated to the matrices $\{A_i\}_{i=1}^k$.  Then $X_{\mathcal{B}_\Lambda}$ is homeomorphic to $\Lambda^\infty$.
\end{prop}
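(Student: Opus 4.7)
The plan is to build an explicit homeomorphism $\phi\colon \Lambda^\infty \to X_{\mathcal{B}_\Lambda}$ by slicing each infinite $k$-graph path along a fixed ``staircase'' lattice path in $\N^k$. To set up, I would identify the labels $\{1,\dots, N\}$ of each $\mathcal{V}_n$ with $\Lambda^0$ once and for all, and choose, for every pair $(p,q) \in \mathcal{V}_{n-1}\times\mathcal{V}_n$, a bijection between the edges in $\mathcal{E}_n$ from $q$ to $p$ and the $k$-graph edges $v_p \Lambda^{e_i} v_q$, where $i \in \{1,\dots,k\}$ satisfies $i \equiv n \pmod k$. Such bijections exist by the very definition of $\mathcal{B}_\Lambda$ via the adjacency matrices $A_i$. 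Next, define the staircase sequence $(p_n)_{n \geq 0}$ in $\N^k$ by $p_0 = 0$ and $p_n - p_{n-1} = e_{i(n)}$, where $i(n) \equiv n \pmod k$; in other words, $p_n$ is reached by walking one step at a time in directions $e_1, e_2, \ldots, e_k, e_1, e_2, \ldots$. Given $x\in\Lambda^\infty$, set $\phi(x) = (y_n)_{n \geq 1}$, where $y_n$ is the image of the edge $x(p_{n-1}, p_n) \in \Lambda^{e_{i(n)}}$ under the chosen bijection. The identities $r(y_n) = x(p_{n-1})$ and $s(y_n) = x(p_n) = r(y_{n+1})$ show that $\phi(x)$ is indeed an infinite path of $\mathcal{B}_\Lambda$.

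To show $\phi$ is a bijection, I would lean on the factorization property of $\Lambda$. For injectivity, if $\phi(x) = \phi(x')$ then $x$ and $x'$ agree on every edge $(p_{n-1}, p_n)$, so by concatenation they agree on $(0, p_N)$ for all $N$; for a general morphism $(m,n) \in \Omega_k$, choose $N$ with $n \leq p_N$ and invoke the uniqueness in the three-fold factorization $x(0, p_N) = x(0, m)\, x(m, n)\, x(n, p_N)$ to conclude $x = x'$. For surjectivity, given $(y_n)_{n \geq 1} \in X_{\mathcal{B}_\Lambda}$, translate back into $\Lambda$ and form the composable string $\lambda_N := y_1 \cdots y_N \in \Lambda^{p_N}$. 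Then define $x(0, m)$, for each $m \in \N^k$, to be the unique prefix of degree $m$ of $\lambda_N$ for any $N$ with $m \leq p_N$. Independence of $N$ and the extension to a well-defined $k$-graph morphism $\Omega_k \to \Lambda$ (with $x(a,b)$ the unique factor of degree $b-a$ between $x(0,a)$ and $x(0,b)$) both follow from the factorization property, and by construction $\phi(x) = (y_n)$.

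Finally, to promote $\phi$ to a homeomorphism, I would observe that for every finite path $\eta = \eta_1 \cdots \eta_n$ in $\mathcal{B}_\Lambda$ with $r(\eta) \in \mathcal{V}_0$ one has $\phi^{-1}([\eta]) = [\lambda]$, where $\lambda = y_1 \cdots y_n \in \Lambda^{p_n}$ is the corresponding $k$-graph path; this shows $\phi$ is continuous with respect to the sub-bases of cylinder sets. Since $\Lambda^0$ is finite, $\Lambda^\infty = \bigsqcup_{v \in \Lambda^0}[v]$ is compact, and $X_{\mathcal{B}_\Lambda}$ is Hausdorff, so the continuous bijection $\phi$ is automatically a homeomorphism.

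The main delicate point is the surjectivity argument: converting the essentially one-dimensional data of a Bratteli path into a fully-fledged $k$-graph morphism $x\colon \Omega_k \to \Lambda$ requires repeated use of the factorization property to define $x(m,n)$ coherently for $m, n \in \N^k$ that do not lie on the staircase $(p_j)_j$, and to verify that the resulting $x$ really is functorial. Once that bookkeeping is done, continuity and the compact-Hausdorff argument make the rest essentially automatic.
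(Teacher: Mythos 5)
Your proposal is correct and follows essentially the same route as the paper: both decompose an infinite path along the ``rainbow''/staircase order $e_1,\dots,e_k,e_1,\dots$ to get a Bratteli path, invert using the factorization property (the paper cites Remark~2.2 of \cite{kp} for the fact that a $k$-graph morphism $\Omega_k\to\Lambda$ is determined by its square prefixes, which is the coherence point you flag), and then match cylinder sets. The only cosmetic difference is at the end: the paper checks that the bijection carries cylinder sets to cylinder sets in both directions and invokes the fact that square cylinder sets generate the topology on $\Lambda^\infty$, whereas you verify continuity in one direction and use that a continuous bijection from a compact space to a Hausdorff space is a homeomorphism; both are valid.
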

\begin{proof}
Fix $x\in \Lambda^\infty$ and write $\1 := (1, 1, \ldots, 1) \in \N^k$. Then the factorization property for $\Lambda^\infty$ implies that there is a unique sequence 
\[ (\lambda_i)_i \in \prod_{i=1}^\infty \Lambda^{\1}\]
such that $x = \lambda_1 \lambda_2 \lambda_3 \cdots$ with  $\lambda_i = x((i-1)\1, i \1)$. (See the details in Remark~2.2 and Proposition~2.3 of \cite{kp}).  Since there is a unique way to write  $\lambda_i = f_1^i f_2^i \cdots f_k^i$ as a composable sequence of edges with $d(f_j^i) = e_j$, we have
\[x = f_1^1 f_1^2 \cdots f_1^k f_2^1 f_2^2 \cdots f_2^k f_3^1 \cdots\; , \]
 where the $n k + j$th edge has color $j$. Thus, for each $j$, $f^i_j$ corresponds to an entry in $A_j$, and hence 
\[f_1^1 f_1^2 \cdots f_1^k f_2^1 f_2^2 \cdots f_2^k f_3^1 \cdots \in X_{\mathcal{B}_\Lambda}.\]

Conversely, given $y= (g_\ell)_\ell \in X_{\mathcal{B}_\Lambda}$, we  construct an associated $k$-graph infinite path
$\tilde y \in \Lambda^\infty$ as follows.  
To $y = (g_\ell)_\ell $ we associate a sequence $(\eta_n)_{n\geq 1}$ of finite paths in $\Lambda$, where 
\[\eta_n = g_1 \cdots g_{n k}\]
is the unique morphism in $\Lambda$ of degree $(n, \ldots, n)$ represented by the sequence of composable edges $g_1 \cdots g_{n k}$.  

Recall from \cite{kp} Remark 2.2 that  a morphism $\tilde y : \Omega_k \to \Lambda$ is uniquely determined by $\{\tilde y(0, n\1) \}_{n \in \N}$.  Thus, the sequence $(\eta_n)_n$ determines $\tilde y$:
\[ \tilde y(0, 0) = r(y) = r(g_1), \qquad \tilde y(0, n \1) := \eta_n \ \forall \ n \geq 1.\]
 The map $y \mapsto \tilde{y}$ is easily checked to be a bijection which is inverse to the map $x \mapsto  f_1^1 f_1^2 \cdots f_1^k f_2^1 f_2^2 \cdots f_2^k f_3^1 \cdots $.
 
Moreover, for any $i \in \N$,  $ 0 \leq j\leq k-1$, and any 
\[\lambda = f_1^1 f_1^2 \cdots f_1^k f_2^1 f_2^2 \cdots f_2^k f_3^1 \cdots f^i_j\]
 with $d(\lambda) = i \1  + (\overbrace{1, \ldots, 1}^j, 0, \ldots, 0)$, both of these bijections preserve the cylinder set $[\lambda]$.  In particular, these bijections preserve the ``square'' cylinder sets $[\lambda]$ associated to paths $\lambda$ with $d(\lambda) =  i \1 $ for some $i \in \N$.  (If $i =0$ then we interpret $d(\lambda) = 0 \cdot \1$ as meaning that $\lambda$ is a vertex in $\mathcal{V}_0 \cong \Lambda^0$.)
 From the proof of Lemma~4.1 of \cite{FGKP}, any cylinder set can be written as a disjoint union of square cylinder sets, and therefore the square cylinder sets generate the topology on $\Lambda^\infty$. We deduce that $\Lambda^\infty$ and $X_{\mathcal{B}_\Lambda}$ are homeomorphic, as claimed.
\end{proof}

\begin{rmk}
\label{rmk:no-bij-finite-paths}
\begin{itemize}
\item[(a)] Thanks to Proposition \ref{pr:kgraph-bratteli-inf-path-spaces}, we will usually identify the infinite path spaces $X_{\mathcal B_\Lambda}$ and $\Lambda^\infty$, denoting this space by the symbol which is most appropriate for the context.  In particular, the Borel structures on $X_{\mathcal B_\Lambda}$ and $\Lambda^\infty$ are isomorphic, and so any Borel measure on $\Lambda^\infty$ induces a unique Borel measure on $X_{\mathcal B_\Lambda}$ and vice versa.
\item[(b)] The bijection of Proposition \ref{pr:kgraph-bratteli-inf-path-spaces}
 between infinite paths in the $k$-graph $\Lambda$ and in the associated Bratteli diagram $\mathcal{B}_\Lambda$ does not extend to finite paths.
While any finite path in the Bratteli diagram determines a finite path, or morphism, in $\Lambda$, not all morphisms in $\Lambda$ have a representation in the Bratteli diagram. For example, if $e_1$ is a morphism of degree $(1,0, \ldots, 0)\in \N^k$ in a $k$-graph ($k > 1$) with $r(e_1) = s(e_1)$, the composition $e_1 e_1$ is a morphism in the $k$-graph which cannot be represented as a path on the Bratteli diagram.
 However, the proof of Proposition \ref{pr:kgraph-bratteli-inf-path-spaces} above establishes that ``rainbow'' paths in $\Lambda$ -- morphisms of degree $(\overbrace{q+1, \ldots, q+1}^j, q, \ldots, q)$ for some $q \in \N$ and $1 \leq j \leq k$ --  can be represented uniquely as paths of length $kq+j$ in the Bratteli diagram. 
\end{itemize}
\end{rmk}

\subsection{Ultrametrics on $X_{\mathcal{B}}$}
\label{sec:ultrametrics}
Although the Cantor set is unique up to homeomorphism, different metrics on it can induce quite different geometric structures. In this section, we will focus on Bratteli diagrams $\mathcal B$ for which the infinite path space $X_{\mathcal B}$ is a Cantor set.  In this setting, we
 construct ultrametrics on $X_{{\mathcal B}}$ by using weights on $\mathcal{B}$.  To do so, we first need to introduce some definitions and notation.

\begin{defn}  A metric $d$ on a Cantor set $\mathcal{C}$ is called an {\it ultrametric}
if $d$ induces the 
Cantor set topology and  satisfies
\begin{equation}\label{eq:strong-inequality}
d(x,y ) \leq \max\{ d(x,z),d(y,z)\} \quad\text{for all $x,y,z \in \mathcal{C}$}.
\end{equation}
\end{defn}
The inequality of \eqref{eq:strong-inequality} often called the strong triangle inequality.

\begin{defn}
\label{def:finite-path-Bratteli}
Let $\mathcal{B}$ be a Bratteli diagram.  Denote by $F\mathcal{B}$ the set of finite paths in $\mathcal{B}$ with range in $\mathcal{V}_0$.  For any $n \in \N$, we write 
\[ F^n \mathcal{B} = \{ \lambda \in F\mathcal{B}: |\lambda| = n \}.\]   

Given two (finite or infinite) paths $\lambda, \eta$ in $\mathcal{B}$, we say $\eta$ is a {\em sub-path} of $\lambda$ if there is a sequence $\gamma$ of edges, with $r(\gamma) = s(\eta)$, such that $\lambda = \eta \gamma$.

For any two infinite paths $x, y \in X_{\mathcal{B}}$, we define $x \wedge y $ to be the longest path $ \lambda \in F\mathcal{B}$ such that $ \lambda $ is a sub-path of $x$ and $y$.  We write 
 $x \wedge y = \emptyset$ when no such path $\lambda$ exists.
\end{defn}

\begin{defn}
\label{def:weight}
 A \emph{weight} on a Bratteli diagram $\mathcal{B}$ is a function $w: F\mathcal{B} \to \R^+$ such that 
\begin{itemize}
\item For any vertex $v \in \mathcal{V}_0, \ w(v) <1$.
\item  $\lim_{n\to \infty} \sup \{ w(\lambda): \lambda \in F^n\mathcal{B} \} = 0 .$
\item If $\eta$ is a sub-path of $\lambda$, then $w(\lambda) < w(\eta)$.
\end{itemize}
A Bratteli diagram with a weight often called a weighted Bratteli diagram and denoted by $(\mathcal{B},w)$.
\end{defn}
Observe that the third condition  implies that for any path $x = (x_n)_n \in \mathcal{B}$  (finite or infinite), 
\[w\left( x_1 x_2 \ldots x_n \right) > w\left( x_1 x_2 \cdots x_{n+1} \right) \quad\text{for all $n$}.\]

The concept above of a weight was inspired by Definition 2.9 of \cite{julien-savinien}; indeed, if one denotes a weight in the sense of \cite{julien-savinien} Definition~2.9 by $w'$, and defines $w(\lambda) := w'(s(\lambda))$, then  $w$ is a weight on $\mathcal{B}$ in the sense of Definition~\ref{def:weight} above.

\begin{prop}\label{pro:weight-to-metric}
Let $(\mathcal{B},w)$ be a weighted Bratteli diagram such that $X_{\mathcal{B}}$ is a Cantor set.  The function $d_w: X_\mathcal{B} \times X_{\mathcal{B}} \to \R^+$ given by 
\[ 
d_w(x,y) = \begin{cases}
1 & \text{ if } x \wedge y = \emptyset, \\
 0  &  \text{ if } x=y, \\
w(x \wedge y) & \text{ else.}
\end{cases}
\]
is an ultrametric on $X_{\mathcal{B}}$. 
\label{pr:weight-ultrametric}
\end{prop}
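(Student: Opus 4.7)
The plan is to verify the three metric axioms (positivity with equality iff $x = y$, symmetry, and the strong triangle inequality), and then show that $d_w$ induces the cylinder-set topology on $X_\mathcal{B}$. Positivity and symmetry are immediate from the definition: $w$ takes values in $\R^+$ with all values strictly less than $1$ (by iterating condition three starting from the range vertex, which lies in $\mathcal V_0$ where $w < 1$), so $d_w(x,y) \in [0,1]$; and $x \wedge y = y \wedge x$ by definition. The case $d_w(x,y)=0$ forces $x = y$ since otherwise there is a first index $n$ at which $x_n \neq y_n$, and then $x\wedge y$ is a finite path of positive weight (or $x \wedge y = \emptyset$ giving distance $1$).

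For the strong triangle inequality, I would fix $x,y,z \in X_\mathcal{B}$ and set $\alpha = x \wedge z$, $\beta = y \wedge z$, $\gamma = x\wedge y$. The key observation is that $\alpha$ and $\beta$ are both sub-paths of $z$, hence one is a sub-path of the other; say $|\alpha| \leq |\beta|$, so $\alpha$ is a sub-path of $\beta$. Then $\alpha$ is a common sub-path of $x$ (by definition of $\alpha$) and of $y$ (since $\beta$ is a sub-path of $y$), so $\alpha$ is a sub-path of $\gamma$. Condition three of Definition~\ref{def:weight} (and the monotonicity observation following it) gives $w(\gamma) \leq w(\alpha)$, i.e.\ $d_w(x,y) \leq d_w(x,z) \leq \max\{d_w(x,z),d_w(y,z)\}$. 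The empty-path case needs separate bookkeeping: if $\gamma = \emptyset$ while both $\alpha$ and $\beta$ are nonempty, the same WLOG argument would force $\alpha$ to be a common sub-path of $x$ and $y$, contradicting $\gamma = \emptyset$; so at least one of $\alpha,\beta$ is empty and the corresponding distance equals $1 = d_w(x,y)$.

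To identify the two topologies, I would use that, for $x \in [\lambda]$ with $|\lambda|=n$, the set $[\lambda]$ coincides with $\{y : |x \wedge y| \geq n\}$, and that by condition three $|x \wedge y| \geq n$ is equivalent to $w(x \wedge y) \leq w(\lambda)$. Thus each $[\lambda]$ is a closed $d_w$-ball around any of its points; picking a radius $r$ with $w(x_1 \cdots x_{n+1}) < r \leq w(\lambda)$ exhibits $[\lambda]$ as containing the open $d_w$-ball $B(x,r)$, showing cylinder sets are $d_w$-open. Conversely, condition two of Definition~\ref{def:weight} guarantees $w(x_1\cdots x_n) \to 0$, so cylinder sets around $x$ are cofinal in the $d_w$-neighborhood filter of $x$. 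Hence the two topologies agree, and $d_w$ is an ultrametric in the sense required.

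I do not expect a serious obstacle: the argument is essentially a direct unpacking of the three weight axioms. The only mild subtlety is the bookkeeping around empty common sub-paths (the ``$d_w = 1$'' case) in the strong triangle inequality, and the mild subtlety that comparisons between $w$-values of arbitrary finite paths are only guaranteed when one is a sub-path of the other --- which is precisely why the proof hinges on the observation that $\alpha$ and $\beta$ are automatically comparable as sub-paths of $z$.
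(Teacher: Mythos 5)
Your proof is correct and follows essentially the same route as the paper's: verify the metric axioms, establish the strong triangle inequality via the key fact that $x\wedge z$ and $y\wedge z$ are comparable initial segments of $z$ whose weights are ordered by the monotonicity axiom, and identify the topologies by showing open balls coincide with cylinder sets. Your organization of the ultrametric inequality (working symmetrically with $\alpha$, $\beta$, $\gamma$ rather than the paper's case split on whether some $z$ satisfies $d_w(x,z)<d_w(x,y)$) is a cosmetic streamlining, not a different argument.
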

\begin{proof}
It is evident from the defining conditions of a weight that $d_w$ is symmetric and satisfies $d_w(x, y) = 0 \Leftrightarrow x=y$.
Since the inequality  \eqref{eq:strong-inequality}  is stronger than the triangle inequality, once we show that $d_w$ satisfies the ultrametric condition \eqref{eq:strong-inequality} it will follow that $d_w$ is indeed a metric.

To that end, first suppose that $d_w(x, y) = 1$; in other words, $x$ and $y$ have no common sub-path.  This implies that for any $z \in X_{\mathcal{B}}$, at least one of $d(x, z)$ and $d(y,z)$ must be 1, so 
\[ d_w(x, y) \leq \max \{ d_w(x, z), d_w(y,z)\},\]
as desired.  Now, suppose that $d_w(x, y) = w(x \wedge y ) < 1$. If $d_w(x, z) \geq d_w(x, y)$ for all $z \in X_\mathcal{B}$ then we are done.  On the other hand, if there exists $z \in X_\mathcal{B}$ such that $d_w(x, z) < d_w(x, y)$, then the maximal common sub-path of $x$ and $z$ must be  longer than that of $x$ and $y$.  This implies that 
\[ d_w(y, z) := w(y \wedge z) = w(y \wedge x) = d_w(x,y);\]
consequently, in this case as well we have $d_w(x, y) \leq \max \{ d(x, z), d_w(y,z)\}$.

Finally, we observe that the metric topology induced by $d_w$ agrees with the cylinder set topology.  To see this, fix $x = (x_n)_n \in X_{\mathcal{B}}$ and  $r >0$. Then the  conditions in Definition \ref{def:weight} imply that there is a smallest $n \in \N$ such that $w(x_1 \cdots x_n) < r$.  Then, 
\[B_r(x) = \{y \in X_\mathcal{B} : w(x \wedge y) < r \} =  [x_1 \cdots x_n],\]
so cylinder sets of $X_{\mathcal{B}}$ and open balls induced by the metric $d_w$ agree.  (If $n =0$ then we interpret $x_1 
\cdots x_n$ as $r(x)$.)
\end{proof}

\subsection{Strongly connected higher-rank graphs}
\label{sec:strongly-conn}
When $\Lambda$ is a finite $k$-graph whose adjacency matrices satisfy some additional properties, there is a natural family $\{w_\delta\}_{0 < \delta < 1}$ of weights on the associated Bratteli diagram $\mathcal{B}_\Lambda$ which induce  ultrametrics on the infinite path space $X_{\mathcal{B}_\Lambda}$.  We describe these additional properties on $\Lambda$ and the formula of the weights $w_\delta$  below.

\begin{defn}
A $k$-graph $\Lambda$ is \emph{strongly connected} if, for all $v, w \in \Lambda^0$, $v\Lambda w \not= \emptyset$.
\end{defn}

In Lemma 4.1 of \cite{aHLRS}, an Huef et al.~show that a finite $k$-graph $\Lambda$ is strongly connected if and only if the adjacency matrices $A_1, \ldots, A_k$ of $\Lambda$ form an \emph{irreducible family of matrices}.    Also,  Proposition 3.1 of \cite{aHLRS} implies that if $\Lambda$ is a finite strongly connected $k$-graph, then there is a unique positive vector $x^\Lambda \in (0, \infty)^{\Lambda^0}$ such that $\sum_{v \in \Lambda^0} x^\Lambda_v =1$ and  for  all $1 \leq i \leq k$,
\[ A_i x^\Lambda = \rho_i x^\Lambda,\]
where $\rho_i$ denotes the spectral radius of $A_i$.  We call $x^\Lambda$ the \emph{Perron-Frobenius eigenvector} of $\Lambda$. Moreover, an Huef et al.~constructed a Borel probability measure $M$ on $\Lambda^\infty$ in Proposition~8.1 of \cite{aHLRS} when $\Lambda$ is finite, strongly connected $k$-graph. 
The measure $M$ on $\Lambda^\infty$  is given by 

\begin{equation}\label{eq:kgraph_Measure}
M([\lambda])=\rho(\Lambda)^{-d(\lambda)}x^{\Lambda}_{s(\lambda)} \quad\text{for $\lambda\in\Lambda$,}
\end{equation}
where $x^\Lambda$ is the Perron-Frobenius eigenvector of $\Lambda$ and  $\rho(\Lambda)=(\rho_1,\dots, \rho_k)$, and for $n=(n_1, \ldots, n_k) \in \N^k$, 
\[ 
\rho(\Lambda)^n : = \rho^{n_1}_1 \cdots \rho_k^{n_k}.
\]

{We know from Remark \ref{rmk:no-bij-finite-paths} that every finite path $\lambda \in  {\mathcal B_\Lambda}$ corresponds to a unique morphism in $\Lambda$.  Using this correspondence and the homeomorphism $X_{\mathcal B_\Lambda} \cong \Lambda^\infty$ of Proposition \ref{pr:kgraph-bratteli-inf-path-spaces}, Equation \eqref{eq:kgraph_Measure} translates into the formula }
\begin{equation}\label{eq:M-measure}
M([\lambda]) = (\rho_1 \cdots \rho_t)^{-(q+1)} (\rho_{t+1} \cdots \rho_k)^{-q} x^\Lambda_{s(\lambda)}
\end{equation}
for $[\lambda] \subseteq X_{\mathcal B_\Lambda}$, where $\lambda\in F\mathcal{B}_\Lambda$ with $|\lambda| = qk +t$ and $x^\Lambda$ is the Perron-Frobenius eigenvector of $\Lambda$.

In the proof that follows, we rely heavily on the identification between $\Lambda^\infty$ and $X_{\mathcal{B}_\Lambda}$ of Proposition \ref{pr:kgraph-bratteli-inf-path-spaces}.  We also use the observation from Remark \ref{rmk:no-bij-finite-paths} that every finite path in $F \mathcal{B}_\Lambda$ corresponds to a unique finite path $\lambda \in \Lambda$.

\begin{prop}
\label{pr:spec-rad-cantor}
Let $\Lambda$ be a finite, strongly connected $k$-graph with adjacency matrices $A_i$. Then the infinite path space $\Lambda^\infty$ is a Cantor set whenever $\prod_i \rho_i > 1$.
\end{prop}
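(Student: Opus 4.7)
The plan is to identify $\Lambda^\infty$ with $X_{\mathcal B_\Lambda}$ via Proposition \ref{pr:kgraph-bratteli-inf-path-spaces} and then verify criterion (2) of Proposition \ref{pr:inf-path-cantor}. First, I would confirm the standing hypotheses of Proposition \ref{pr:inf-path-cantor}: that $\mathcal B_\Lambda$ has no sinks outside $\mathcal V_0$ and no sources. In $\Lambda$-language these translate to $\Lambda^{e_i} v \neq \emptyset$ and $v \Lambda^{e_i} \neq \emptyset$ for every $v \in \Lambda^0$ and every $i \in \{1,\ldots,k\}$. The hypothesis $\prod_i \rho_i > 1$ rules out $\rho_i = 0$, so each $A_i$ is nonzero and at least one edge of each color exists. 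Combining this with strong connectedness (to produce a morphism between any two vertices) and the factorization property (which allows a color-$i$ edge appearing anywhere in such a morphism to be slid to its range or source end) establishes the two required non-emptiness conditions.

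The key quantitative input is then the following pigeonhole step. Since $\prod_i \rho_i > 1$ and every $\rho_i \geq 0$, at least one index $i_0$ satisfies $\rho_{i_0} > 1$. Because the maximum row sum of a non-negative matrix is an upper bound for its spectral radius, some row of the integer matrix $A_{i_0}$ must have sum at least $2$. Equivalently, there exists a vertex $u_0 \in \Lambda^0$ together with two distinct edges $e_1, e_2 \in u_0\Lambda^{e_{i_0}}$.

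To verify criterion (2) of Proposition \ref{pr:inf-path-cantor}, it suffices to show that every cylinder set in $\Lambda^\infty$ contains at least two distinct paths, which (via the bijection $[\lambda] \leftrightarrow s(\lambda)\Lambda^\infty$ induced by $y \mapsto \lambda y$) reduces to proving $|v\Lambda^\infty| \geq 2$ for every $v \in \Lambda^0$. Given such $v$, strong connectedness yields $\mu \in v\Lambda u_0$, so that $\mu e_1$ and $\mu e_2$ are two distinct morphisms in $v\Lambda$. Using the source-freeness established in the first step, I would extend each to an infinite path $\mu e_j y_j \in v\Lambda^\infty$, with $y_j \in s(e_j)\Lambda^\infty$; these two paths are distinct because their initial segments of degree $d(\mu) + e_{i_0}$ already differ.

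I expect the only mildly technical point is the preliminary deduction of source-freeness (and its dual) from strong connectedness plus non-vanishing of each $A_i$; once that is in place, the remainder is a pigeonhole argument on the rows of $A_{i_0}$ combined with concatenation via strong connectedness.
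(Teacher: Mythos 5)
Your proof is correct, but it takes a genuinely different — and considerably more elementary — route than the paper's. The paper argues by contraposition: an isolated point $x$ forces each sufficiently long initial segment of $x$ to admit a unique extension of each degree, and averaging the resulting sequence of source vertices over a period produces a nonnegative eigenvector of $A^T$ with eigenvalue $1$; strong connectedness (via a positive matrix $A_F$ built from the $A_i$) upgrades this to a positive eigenvector, and the subinvariance Lemma~3.2 of \cite{aHLRS} then yields $\prod_i \rho_i = \rho(A) \le 1$. You instead observe that $\prod_i\rho_i>1$ forces some $\rho_{i_0}>1$, hence (row sums of a nonnegative matrix dominate its spectral radius, and here they are integers) a vertex $u_0$ with $\#(u_0\Lambda^{e_{i_0}})\ge 2$, and you use strong connectedness to transport this single branching into every cylinder set via $\#[\lambda]=\#\bigl(s(\lambda)\Lambda^\infty\bigr)$. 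This is precisely the mechanism the authors themselves note in the remark following the proposition, but under the much stronger hypothesis that \emph{every} row sum of \emph{every} $A_i$ is at least $2$; your point is that one branching vertex suffices once strong connectedness is available. The supporting steps all check out: no sources and no sinks follow from strong connectedness plus $A_i\ne 0$ by sliding an existing colour-$i$ edge to either end of a connecting morphism via the factorization property; $\mu e_1\ne\mu e_2$ follows from uniqueness in the factorization property; and extending a finite path to an infinite one is an immediate induction once the Bratteli diagram is known to have no sources, so no compactness argument is needed. The only point worth making explicit is that criterion (2) of Proposition~\ref{pr:inf-path-cantor} concerns Bratteli cylinder sets, which under Proposition~\ref{pr:kgraph-bratteli-inf-path-spaces} correspond to $k$-graph cylinders of rainbow degree — but since $\#[\lambda]\ge 2$ for \emph{all} $\lambda\in\Lambda$ in your argument, this is harmless. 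Your route is shorter and avoids the Perron--Frobenius machinery entirely; for this particular statement the paper's contrapositive argument buys nothing extra.
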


\begin{proof}
 We let $A = A_1 \ldots A_k$; it is a matrix whose entries are indexed by $\Lambda^0 \times \Lambda^0$, and its spectral radius is $\prod_i \rho_i$.
 We assume that $\Lambda^\infty$ is not a Cantor set, and will prove that the spectral radius of $A$ is at most $1$, hence proving the Proposition.
 
 Since $\Lambda^\infty$ is compact Hausdorff and totally disconnected, {but not a Cantor set, }it has an isolated point $x$. We write $\{\gamma_n\}_{n \in \N}$ for the increasing sequence of finite paths in $\mathcal B_\Lambda$ which are sub-paths of $x$.  If $n = \ell k +t$, then $|\gamma_n| = n$ and (thinking of $\gamma_n$ as an element of $\Lambda$) $d(\gamma_n)=(\ell +1, \ldots, \ell +1, \ell , \ldots, \ell)$ with $t$ occurrences of $\ell +1$.
 Since $x$ is an isolated point, there exists $N \in \N$ such that for all $n \geq N$, $[\gamma_n] = \{ x\}$.  Without loss of generality, we can assume that $N=dk$ is a multiple of $k$, so that $d(\gamma_N) = (d, \ldots, d)$.
 For $n \geq N$, we write $\gamma_n = \gamma_N \eta_n$, with $|\gamma_n|=n$ and $|\eta_n|=n -N = qk+t$, so that $d( \eta_n)=(q+1, \ldots, q+1, q, \ldots, q)$, with $t$ occurrences of $q+1$. 
  
Our hypothesis that $x$ is an isolated point implies that for all $n \geq N$, $\eta_{n}$ is the unique path of  degree $d(\eta_n)$ whose range is $s(\gamma_N) = r(\eta_n)$.
 This, in turn, implies that for all $n \geq N$, we have $A^q A_1 \ldots A_{t} (r(\eta_{n}), z)$ is equal to $1$ for a single $z$, and $0$ otherwise.
 In other words, if we consider the column vector $\delta_{v}$ which is $1$ at the vertex $v$ and $0$ else, we have that
 \[
  \big(\delta_{r(\eta_{n})} \big)^T \cdot A^q A_1 \ldots A_t = \big( \delta_{s(\eta_{n})} \big)^T.
 \]

Note that for each $n \geq N$ with $n - N=qk+t$, 
$s(\eta_{n+1})$ is the label of the only non-zero entry in row  $s(\eta_n)$ of the matrix $A_{t}$.
Since each entry in the sequence $(s(\eta_n))_{n \in \N}$ is completely determined by a finite set of inputs -- namely, the previous entry in the sequence, and the entries of the matrices $A_t$ -- and the set $\Lambda^0$ of vertices is finite, the sequence $(s(\eta_n))_{n \in \N}$  is eventually periodic. 
 Let $p$ be a period for this sequence. Then $kp$ is also a period, so there exists $J$ such that for all $n \geq J$ 
 we have
 \[(A^{p})^T \delta_{s(\eta_{n})} = \delta_{s(\eta_{n})}.\]
{We average along one period and define} 
 \[
  \vec v = \frac{1}{kp} \sum_{j=J+1}^{J + kp} \delta_{s(\eta_{j})},
 \]
 we compute
 \[
  A^T \vec v = \frac{1}{kp} \sum_{j=J+1}^{J+kp} \delta_{s(\eta_{j})}  = \vec{v},
 \]
 so $\vec v$ is an eigenvector of $A^T$ with eigenvalue $1$, with nonnegative entries.

Since $\Lambda$ is strongly connected by hypothesis, Lemma 4.1 of \cite{aHLRS} implies that there exists a matrix $A_F$ which is a finite sum of finite products of the matrices $A_i$ and which has positive entries. This matrix $A_F$ commutes with $A$, and therefore
 \[
  A^T A_F^T \vec v = A_F^T A^T \vec v = A_F^T \vec v,
 \]
 and so $\vec u := A_F^T \vec v$ is an eigenvector of $A^T$ with eigenvalue $1$.
 Since $A_F$ is positive and $\vec v$ is nonnegative, $\vec u$ is positive. Therefore, we can apply Lemma~3.2 of~\cite{aHLRS} and conclude that $\prod_i \rho_i= \rho(A) \leq 1$.
\end{proof}

\begin{rmk}
The proof of Proposition \ref{pr:spec-rad-cantor} simplifies considerably if we add the hypothesis that each row sum of each adjacency matrix $A_i$ is at least 2.  In this case, any finite path $\gamma$ in the Bratteli diagram has at least two extensions $\gamma  e$ and $\gamma  f$. In terms of neighbourhoods, this means that each clopen set  $[\gamma]$ contains at least two disjoint non-trivial sets $[\gamma e], [\gamma f]$. It is therefore impossible to have a cylinder set $[\gamma]$ consist of a single point. Therefore, there is no isolated point in $X_{\mathcal B_\Lambda}$, and the path space is a Cantor set.
\end{rmk}

\begin{prop}
\label{pr:delta-weight}
Let $\Lambda$ be a finite, strongly connected $k$-graph with adjacency matrices $A_i$. For $\eta\in F\mathcal{B}_\Lambda$ with $|\eta|=n\in \N$, write $n = qk + t$ for some $q, t \in \N$ with $	0 \leq t \leq k-1$.  For each $\delta \in (0,1),$ define $w_\delta: F\mathcal{B}_\Lambda \to \R^+$ by 
\begin{equation}
w_\delta(\eta) = \left(\rho_1^{q +1} \cdots \rho_t^{q+1} \rho_{t+1}^q \cdots \rho_k^q \right)^{-1/\delta} x^\Lambda_{s(\eta)},
\label{eq:w-delta}
\end{equation}
where $x^\Lambda$ is the unimodular Perron-Frobenius eigenvector for $\Lambda$.
If the spectral radius $\rho_i$ of $A_i$ satisfies $\rho_i > 1 \ \forall \ i$, then $w_\delta$ is a weight on $\mathcal{B}_\Lambda$.
\end{prop}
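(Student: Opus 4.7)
The plan is to verify in turn the three defining conditions of Definition~\ref{def:weight} for the function $w_\delta$, using crucially the hypothesis that $\rho_i > 1$ for every $i$.

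For condition (i), observe that a vertex $v \in \mathcal V_0$ corresponds to $|\eta| = 0$, so $q = t = 0$ and the product $\rho_1^{q+1}\cdots \rho_t^{q+1}\rho_{t+1}^q \cdots \rho_k^q$ is an empty product equal to $1$. The formula therefore reduces to $w_\delta(v) = x^\Lambda_v$; since $x^\Lambda$ is unimodular with strictly positive entries and (assuming the nondegenerate case $|\Lambda^0| \geq 2$) $x^\Lambda_v < 1$. For condition (ii), I would estimate
\[
\rho_1^{q+1}\cdots \rho_t^{q+1}\rho_{t+1}^q \cdots \rho_k^q \;\geq\; \bigl(\min_i \rho_i\bigr)^{n}
\]
whenever $|\eta| = n = qk+t$. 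Since $\min_i \rho_i > 1$ and $\delta > 0$, raising to $-1/\delta$ gives a quantity tending to $0$ as $n \to \infty$; combined with $x^\Lambda_{s(\eta)} < 1$, this yields $\sup\{w_\delta(\lambda) : \lambda \in F^n \mathcal B_\Lambda\} \to 0$.

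The main content lies in condition (iii), namely $w_\delta(\lambda) < w_\delta(\eta)$ whenever $\eta$ is a proper sub-path of $\lambda$. By induction on $|\lambda| - |\eta|$ it suffices to treat $\lambda = \eta e$ for a single edge $e$ of $\mathcal B_\Lambda$. Write $|\eta| = qk + t$ and let $i \in \{1, \ldots, k\}$ be the color of $e$ (so $|\eta|+1 \equiv i \pmod k$, with the convention $0 \equiv k$). A direct comparison of the explicit formulas for $w_\delta(\eta)$ and $w_\delta(\eta e)$, distinguishing the two sub-cases $t < k-1$ and $t = k-1$, shows that the exponent of $\rho_i$ increases by exactly $1$ while the exponents of the other $\rho_j$ are unchanged, so that
\[
\frac{w_\delta(\eta e)}{w_\delta(\eta)} \;=\; \rho_i^{-1/\delta}\, \frac{x^\Lambda_{s(\eta e)}}{x^\Lambda_{s(\eta)}}.
\]
Since $e$ has range $s(\eta)$ and source $s(\eta e)$ in the layer governed by $A_i$, the eigenvector equation $A_i x^\Lambda = \rho_i x^\Lambda$ gives
\[
\rho_i\, x^\Lambda_{s(\eta)} \;=\; \sum_w A_i(s(\eta), w)\, x^\Lambda_w \;\geq\; A_i(s(\eta), s(\eta e))\, x^\Lambda_{s(\eta e)} \;\geq\; x^\Lambda_{s(\eta e)}.
\]
Combined with the strict inequality $\rho_i^{1/\delta} > \rho_i$ (which uses both $\rho_i > 1$ and $0 < \delta < 1$, so $1/\delta > 1$), this yields $\rho_i^{1/\delta} x^\Lambda_{s(\eta)} > x^\Lambda_{s(\eta e)}$, making the ratio above strictly less than $1$.

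The hardest part is the bookkeeping in condition (iii), specifically tracking how the pair $(q,t)$ changes when $t$ crosses the $k-1 \to 0$ boundary upon appending $e$, and checking that the comparison produces exactly one extra factor of $\rho_i$. It is here that the hypothesis $\rho_i > 1$ plays a dual role: it drives the decay needed in (ii), and via $1/\delta > 1$ it upgrades the Perron-Frobenius estimate $\rho_i x^\Lambda_{s(\eta)} \geq x^\Lambda_{s(\eta e)}$ to the strict inequality demanded by Definition~\ref{def:weight}.
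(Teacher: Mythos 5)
Your proposal is correct and follows essentially the same route as the paper: verify the three conditions of Definition~\ref{def:weight} directly, with the eigenvector relation $A_i x^\Lambda = \rho_i x^\Lambda$ together with $\rho_i^{1-1/\delta}<1$ doing the work in the monotonicity condition. The only (inessential) difference is in that last step: the paper sums $w_\delta(\lambda f)$ over \emph{all} edges $f$ of a given color extending $\lambda$ and applies the full eigenvector identity, deducing the per-edge inequality from positivity of the terms, whereas you isolate a single edge and use $A_i(s(\eta),s(\eta e))\ge 1$ to get $\rho_i x^\Lambda_{s(\eta)} \ge x^\Lambda_{s(\eta e)}$ — the two manipulations are equivalent.
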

\begin{proof}
Recall that $x^\Lambda \in (0,\infty)^{\Lambda^0}$, $\sum_{v\in\Lambda^0} x^\Lambda_v=1$ and $A_i x^\Lambda=\rho_i x^\Lambda$ for all $1\le i\le k$; thus, for any $v \in \Lambda^0 \cong \mathcal{V}_0$, $w_\delta(v) = x^\Lambda_v <1$, and the first condition of Definition \ref{def:weight} is satisfied.  Since  $\rho_i > 1$ for all $i$ and $0 < \delta < 1$, 
\[ \lim_{q\to \infty} (\rho_i^q)^{-1/\delta} = \lim_{q \to \infty} \left( \frac{1}{\rho_i^{1/\delta}} \right)^q = 0.\]
Thus the second condition of Definition \ref{def:weight} holds.  To see the third condition, we observe that it is enough to show that $w_\delta(\lambda) > w_\delta(\lambda f)$ for any edge $f$ of any color with $s(\lambda)=r(f)$.    Note that if $|\lambda| = qk + j$ for $q\in \N$ and $0\le j\le k-1$, so that $s(\lambda) \in \mathcal{V}_{qk + j}$, then 
\begin{align*}
 \sum_{\stackrel{f: r(f) = s(\lambda)}{d(f) = e_{j+1}}}  w_\delta(\lambda f) &  = \left( ( \rho_1 \cdots \rho_k)^q \rho_1 \ldots \rho_{j +1}\right)^{-1/\delta} \sum_{v \in \Lambda^0} A_{j+1}(s(\lambda)), v) x^\Lambda_v \\
 &= \left( ( \rho_1 \cdots \rho_k)^q \rho_1 \ldots \rho_{j} \right)^{-1/\delta} \rho_{j+1}^{-1/\delta}\rho_{j+1}  x^\Lambda_{s(\lambda)} \\
 & < w_\delta(\lambda).
 \end{align*}
 Here the second equality follows since $x^\Lambda$ is an eigenvector for $A_{j+1}$ with eigenvalue $\rho_{j+1}$, and the final inequality holds because  $\rho_{j+1} > 1$ and $1/\delta > 1,$ and consequently 
 \[\rho_{j+1}^{1-1/\delta} =\frac{1}{\rho_{j+1}^{1/\delta-1}} < 1.\]
\end{proof}
Our primary application for the results of this section is the following.

\begin{cor}\label{cor:ultrametric-Cantor}
Let $\Lambda$ be a finite, strongly connected $k$-graph with adjacency matrices $A_i$ and let $\rho_i$ be the spectral radius for $A_i$, $1\le i\le k$. Suppose that $\rho_i >1$ for all $1\le i\le k$. Let $(\mathcal{B}_\Lambda, w_\delta)$ be the associated weighted $k$-stationary Bratteli diagram given in Proposition~\ref{pr:delta-weight}. Then the infinite path space $X_{\mathcal{B}_\Lambda}$ is an ultrametric Cantor set with the metric $d_{w_\delta}$ induced by the weight $w_\delta$.
\end{cor}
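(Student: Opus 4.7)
The proof is essentially an application of the results established earlier in this section; the corollary serves to package them together.

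My plan is to verify the hypotheses of Propositions \ref{pr:spec-rad-cantor}, \ref{pr:delta-weight}, and \ref{pro:weight-to-metric} in turn, and then invoke the homeomorphism from Proposition \ref{pr:kgraph-bratteli-inf-path-spaces}. First, I would observe that since $\rho_i > 1$ for every $1 \leq i \leq k$, we have $\prod_{i=1}^k \rho_i > 1$, so Proposition \ref{pr:spec-rad-cantor} applies and $\Lambda^\infty$ is a Cantor set. By Proposition \ref{pr:kgraph-bratteli-inf-path-spaces}, $\Lambda^\infty$ is homeomorphic to $X_{\mathcal B_\Lambda}$, and hence $X_{\mathcal B_\Lambda}$ is also a Cantor set.

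Next, since $\Lambda$ is finite and strongly connected with $\rho_i > 1$ for all $i$, Proposition \ref{pr:delta-weight} guarantees that the function $w_\delta \colon F\mathcal{B}_\Lambda \to \mathbb{R}^+$ defined in Equation \eqref{eq:w-delta} is a weight on $\mathcal{B}_\Lambda$ in the sense of Definition \ref{def:weight}, for every $\delta \in (0,1)$.

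Finally, having established that $(\mathcal{B}_\Lambda, w_\delta)$ is a weighted Bratteli diagram whose infinite path space $X_{\mathcal{B}_\Lambda}$ is a Cantor set, Proposition \ref{pro:weight-to-metric} applies directly and produces the ultrametric $d_{w_\delta}$ on $X_{\mathcal{B}_\Lambda}$ compatible with the cylinder-set topology. Thus $(X_{\mathcal{B}_\Lambda}, d_{w_\delta})$ is an ultrametric Cantor set, as required.

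Since every step is a direct citation, there is no substantive obstacle; the only subtlety is to spell out that $\rho_i > 1$ for all $i$ implies the product hypothesis of Proposition \ref{pr:spec-rad-cantor}, which is immediate.
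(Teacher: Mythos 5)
Your proof is correct and follows exactly the paper's own argument, which simply combines Propositions \ref{pr:delta-weight}, \ref{pr:spec-rad-cantor}, and \ref{pr:weight-ultrametric}. The only detail you add — that $\rho_i>1$ for all $i$ gives $\prod_i\rho_i>1$, the hypothesis of Proposition \ref{pr:spec-rad-cantor} — is exactly the right observation to make.
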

\begin{proof}
Combine Proposition \ref{pr:delta-weight}, Proposition \ref{pr:spec-rad-cantor}, and Proposition \ref{pr:weight-ultrametric}.
\end{proof}

\section{Spectral triples and Hausdorff dimension for ultrametric higher-rank graph Cantor sets}
\label{sec:zeta-regular}
{Proposition~8 of \cite{pearson-bellissard} (also see Proposition~3.1 of \cite{julien-savinien}) gives a recipe for constructing an even spectral triple for any ultrametric Cantor set induced by a weighted tree.  We begin this section by explaining how this construction works in the case of the ultrametric Cantor sets 
associated to a  finite strongly connected $k$-graph as in the previous section.  In Section \ref{sec:zeta-dixmier},  we investigate the  $\zeta$-function and Dixmier trace of these spectral triples, and Section \ref{sec:hausdorff} computes the Hausdorff measure and Hausdorff dimension of the underlying Cantor sets. }

{To be precise, consider the Cantor set $\Lambda^\infty \cong X_{\mathcal B_\Lambda}$ with the ultrametric induced by the weight $w_\delta$ of Equation \eqref{eq:w-delta}.}  (Because of Proposition \ref{pr:kgraph-bratteli-inf-path-spaces}, we will identify the infinite path spaces of $\Lambda$ and of $\mathcal B_\Lambda$, and use either $\Lambda^\infty$ or $X_{\mathcal B_\Lambda}$ to denote this space, depending on the context.)
 Under additional (but mild) hypotheses, Theorem \ref{thm:abscissa-conv} establishes that the $\zeta$-function of the associated spectral triple has abscissa of convergence $\delta$.  After proving in Theorem \ref{thm:dixmier-measure} that the Dixmier trace of the spectral triple induces a well-defined measure $\mu_\delta$ on $\Lambda^\infty$, Corollary \ref{cor:dixmier-aHLRS} establishes that $\mu_\delta$ agrees with the measure $M$ introduced in \cite{aHLRS} and used in  \cite{FGKP} to construct a  wavelet decomposition of $L^2(\Lambda^\infty, M)$.  Finally, Theorem \ref{thm:anHuef-Hausdorff-meas} shows that in many cases, both $M$ and $\mu_\delta$ agree with the Hausdorff measure on the ultrametric Cantor set $\Lambda^\infty.$ 

Analogues of Theorems \ref{thm:abscissa-conv} and \ref{thm:dixmier-measure} were proved in Section 3 of \cite{julien-savinien} for stationary Bratteli diagrams (equivalently, directed graphs) with primitive adjacency matrices.  However, even for directed graphs our results in this section are stronger than those of \cite{julien-savinien}, since in this setting, our hypotheses are equivalent to saying  that the adjacency matrix is merely irreducible.

We begin by recalling the definition of a spectral triple.
\begin{defn}
Given a  pre-$C^*$-algebra $\mathcal{A}$, a faithful  $*$-representation $\pi: \mathcal{A} \to B(\H)$, and an unbounded operator $D$ on $\H$ such that 
\[ (D^2 +1)^{-1} \in \K(\H) \quad \text{ and } \quad [ D, \pi(a)] \in B(\H) \ \forall \ a \in \mathcal A,\]
we say that $(\mathcal{A}, \H, D)$ is an \emph{(odd) spectral triple}.  If $\H$ has a grading operator  -- a self-adjoint unitary $\Gamma$ -- such that 
\[ [\Gamma, \pi(a)] = 0 \ \forall \ a \in \mathcal{A} \quad \text{ and } \quad \Gamma D = - D \Gamma,\]
we say that $(\mathcal{A}, \H, D, \Gamma)$ is an \emph{even spectral triple}.

Sometimes the representation $\pi$ is also included in the notation for a spectral triple.
\label{def:spectral-triple}
\end{defn}

To any spectral triple, even or odd, we  associate a $\zeta$-function and Dixmier trace  as follows.

\begin{defn}
\label{def:zeta-fcn-dixmier-trace}
The \emph{$\zeta$-function} associated to a spectral triple $(\mathcal{A}, \mathcal{H}, D)$ is given by

\[ \zeta(s) = \frac{1}{2}\text{Tr}\,(|D|^{-s})\quad\text{{for $s\in \C$.}}\]
\end{defn}
The $\zeta$-function of a spectral triple is always a Dirichlet series since $|D|^{-1}$ is compact and hence has a decreasing sequence of eigenvalues.  Thus, 
 Chapter 2 of \cite{hardy-riesz} tells us
that $\zeta$ either converges everywhere, nowhere, or in the complex half plane $\text{Re}(s) \geq s_0$ for some $s_0$, which we call the 
 \emph{abscissa of convergence} of $\zeta$. 
 
\begin{rmk}
\label{rmk:s-in-R}
To determine the abscissa of convergence of the $\zeta$-function, it suffices to evaluate $\zeta$ at points $s \in \R$.  Since we are primarily interested in the abscissa of convergence of $\zeta$, throughout this article, we will only consider real arguments for $\zeta$.
\end{rmk}

\begin{defn}
\label{def:dixmier-trace}
If  the abscissa of convergence $s_0$ of the $\zeta$-function exists, then
the \emph{Dixmier trace} of an element $a\in \mathcal{A}$ is given by
\begin{equation}\label{eq:Dixmier_general}
\mu(a)=\lim_{s\searrow s_0}\frac{\text{Tr}\, (|D|^{-s}\pi(a))} {\text{Tr}\, (|D|^{-s})},
\end{equation}
where we take the limit over $s \in \R, s > s_0$.
\end{defn}

We now review the construction of the spectral triple from \cite{pearson-bellissard} (see also  Section 3 of \cite{julien-savinien}).
Let $(\mathcal{B}, w)$ be a weighted Bratteli diagram such that the infinite path space $X_{\mathcal{B}}$ is a Cantor set.  Let $(X_{\mathcal{B}}, d_w)$ be the associated ultrametric Cantor space. A \emph{choice function} for $(X_{\mathcal{B}}, d_w)$ is a map $\tau:F\mathcal{B}\to  X_{\mathcal{B}}\times X_{\mathcal{B}}$ such that $\tau(\gamma)=(\tau_{+}(\gamma), \tau_{-}(\gamma))\in [\gamma]\times [\gamma]$ and $d_w(\tau_{+}(\gamma),\tau_{-}(\gamma))=\text{diam}\,[\gamma]$, where 
\begin{equation} \text{diam}[\gamma]=\sup\{d_w(x,y)\mid x,y\in [\gamma]\}.
\label{eq:diam}
\end{equation}
We denote by $\Upsilon$ the set of choice functions for $(X_{\mathcal{B}}, d_w)$. Note that  $\Upsilon$ is nonempty whenever $X_{\mathcal{B}}$ is a Cantor set, %
because Condition (3) of Proposition \ref{pr:inf-path-cantor} implies that for every finite path $\gamma$ of $\mathcal{B}$ we can find two distinct infinite paths $x,y\in [\gamma]$.

As in \cite{pearson-bellissard, julien-savinien}, let $C_{\text{Lip}}(X_{\mathcal{B}})$ be the pre-$C^*$-algebra of Lipschitz continuous functions on $(X_{\mathcal{B}}, d_w)$ and let $\mathcal{H}=\ell^2(F\mathcal{B})\otimes \C^2$. For $\tau\in \Upsilon$, we define a faithful $\ast$-representation $\pi_\tau$ of $C_{\text{Lip}}(X_{\mathcal{B}})$ on $\mathcal{H}$ by
\[
\pi_\tau(f)=\bigoplus_{\gamma\in F\mathcal{B}}\begin{pmatrix} f(\tau_{+}(\gamma)) & 0 \\ 0 & f(\tau_{-}(\gamma)) \end{pmatrix}.
\]
A Dirac operator $D$ on $\mathcal{H}$ is given by
\[
D=\bigoplus_{\gamma\in F\mathcal{B}}\frac{1}{\text{diam}[\gamma] }\begin{pmatrix} 0 & 1\\ 1 & 0\end{pmatrix}.
\]
The grading operator $\Gamma$ is given by 
\[
\Gamma=1_{\ell^2(F\mathcal{B})}\otimes \begin{pmatrix} 1&0 \\ 0 & -1\end{pmatrix}.
\]

Then by Proposition 8 of \cite{pearson-bellissard}, $(C_{\text{Lip}}(X_{\mathcal{B}}), \mathcal{H}, \pi_\tau, D, \Gamma)$ is an even spectral triple for all $\tau\in \Upsilon$. 

 For the rest of the paper, we assume the following:
 \begin{hypothesis}
 { Given a weighted Bratteli diagram $(\mathcal{B},w)$,  the weight $w$  satisfies }
\begin{equation}\label{eq:weight-diam}
{w(\lambda) = \text{diam}[\lambda]\quad \text{for all } \lambda\in F\mathcal{B}. }
\end{equation}
\label{hypoth}
\end{hypothesis}

\begin{rmk}
\label{rmk:weight-diam}
Observe that if $\mathcal B = \mathcal{B}_\Lambda$ for a $k$-graph $\Lambda$, then {Equation~\eqref{eq:weight-diam} holds if and only if $\Lambda^0$ receives at least two edges of each color, i.e.~$\sum_{b\in \Lambda^0} A_i(a,b)\ge 2$ for all $a\in \Lambda^0$ and $1\le i\le k$. Since the spectral radius of a nonnegative matrix is at least the minimum of its row sums, Equation~\eqref{eq:weight-diam} implies that $\rho_i \geq 2 >1$ for all $1\le i\le k$, and hence $\rho=\rho_1\ldots \rho_k>1$. Therefore, if the function $w_\delta$ given in Equation \eqref{eq:w-delta} satisfies Equation~\eqref{eq:weight-diam}, then $w_\delta$ is a weight and it gives rise to an ultrametric Cantor set $X_{\mathcal{B}_\Lambda}$ by Corollary~\ref{cor:ultrametric-Cantor}.}
\end{rmk}
When Equation \eqref{eq:weight-diam} holds,
then the $\zeta$-function is given  by the  formula
\begin{equation}\label{eq:zeta_w}
\zeta_w(s) = \frac{1}{2}\text{Tr}\,(|D|^{-s}) 
 =\sum_{\lambda \in F\mathcal{B}} w(\lambda)^s.
\end{equation}

If, moreover, the abscissa of convergence $s_0$ of the zeta function $\zeta_w$ is finite  and  the Dixmier trace in \eqref{eq:Dixmier_general} exists, then  it induces a  measure on the infinite path space $X_{\mathcal{B}}$, whose explicit formula on cylinder sets is given by

\begin{equation}\label{eq:Dixmier_w}
\mu_w([\gamma]):=\mu_w(\chi_{[\gamma]}) =\lim_{s \searrow s_0} \frac{\sum_{\lambda \in F_\gamma \mathcal{B}} w(\lambda)^s}{\zeta_w(s)},
\end{equation}
where $F_\gamma \mathcal{B} = \{ \alpha \in F\mathcal{B}: \gamma \text{ is a sub-path of } \alpha \}$
is the set of finite paths which extend a finite path $\gamma$. By abuse of notation, we use the same notation $\mu_w$ for the induced measure as for the Dixmier trace.

Before we begin our analysis of the spectral triples associated to the ultrametric Cantor sets $(X_{\mathcal{B}_\Lambda}, d_{w_\delta})$, we first present sufficient conditions for the Dixmier trace to give a well-defined measure on $X_{\mathcal{B}}$.

\begin{prop}
\label{pr:dixmier-trace-measure}
 Let $(\mathcal{B}, w)$ be a weighted Bratteli diagram and let $(X_{\mathcal{B}}, d_w)$ be the associated ultrametric Cantor set. 
Suppose that {the weight $w$ satisfies Equation~\eqref{eq:weight-diam}}, and that
the $\zeta$-function $\zeta_w(s)$ in \eqref{eq:zeta_w} has  abscissa of convergence $s_0$.  
If $\mu_w ([\gamma]) < \infty$ for all cylinder sets $[\gamma]\in X_{\mathcal{B}}$,  then $\mu_w$ determines a unique finite measure on $X_{\mathcal{B}}$. 
\end{prop}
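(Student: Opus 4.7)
The approach is to apply the Carath\'eodory extension theorem. Since $X_{\mathcal{B}}$ is compact Hausdorff and every cylinder set $[\gamma]$ is clopen, the collection $\mathcal{C} = \{[\gamma] : \gamma \in F\mathcal{B}\}$ is a countable semi-algebra of compact open sets which generates the Borel $\sigma$-algebra on $X_{\mathcal{B}}$. The algebra $\mathcal{R}$ generated by $\mathcal{C}$ consists of finite disjoint unions of cylinder sets, and the task reduces to verifying that $\mu_w$ is finitely additive on $\mathcal{C}$, extends consistently to $\mathcal{R}$, and is countably additive there; Carath\'eodory will then produce the desired unique Borel extension.

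For finite additivity, I would decompose $[\gamma] = \bigsqcup_{e} [\gamma e]$ over all one-step extensions $e$ of $\gamma$ in $\mathcal{B}$ (a finite collection, since $\mathcal{V}_n$ and $\mathcal{E}_n$ are finite for every $n$). The parallel decomposition $F_\gamma \mathcal{B} = \{\gamma\} \sqcup \bigsqcup_{e} F_{\gamma e} \mathcal{B}$ yields
\[
\frac{\sum_{\lambda \in F_\gamma \mathcal{B}} w(\lambda)^s}{\zeta_w(s)} = \frac{w(\gamma)^s}{\zeta_w(s)} + \sum_{e} \frac{\sum_{\lambda \in F_{\gamma e} \mathcal{B}} w(\lambda)^s}{\zeta_w(s)}.
\]
Passing to the limit $s \searrow s_0$ gives $\mu_w([\gamma]) = \sum_{e} \mu_w([\gamma e])$, provided the diagonal term $w(\gamma)^s/\zeta_w(s)$ vanishes in the limit. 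Granted that $\zeta_w(s) \to +\infty$ as $s \searrow s_0$ -- which, for the Dirichlet series $\zeta_w$ with nonnegative coefficients, holds by monotone convergence whenever the series diverges at $s_0$ itself -- this is immediate. Iterating the one-step decomposition gives consistent finite additivity across arbitrary finite disjoint partitions of a cylinder set into cylinder sets, from which $\mu_w$ extends unambiguously to a finitely additive set function on $\mathcal{R}$.

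Upgrading finite additivity to countable additivity is essentially automatic in this setting. If $[\gamma] = \bigsqcup_{i=1}^\infty [\gamma_i]$ is a countable disjoint union of cylinder sets, then by compactness of $[\gamma]$ and clopenness of each $[\gamma_i]$, the open cover $\{[\gamma_i]\}$ admits a finite subcover, and disjointness forces the remaining $[\gamma_i]$ to be empty. Thus every countable disjoint decomposition in $\mathcal{R}$ is really a finite one, and the previous paragraph applies. Carath\'eodory's theorem now yields a unique Borel measure $\mu_w$ on $X_{\mathcal{B}}$ extending the set function on $\mathcal{R}$; finiteness of this extension follows from the decomposition $X_{\mathcal{B}} = \bigsqcup_{v \in \mathcal{V}_0} [v]$ into finitely many cylinder sets together with the hypothesis that $\mu_w([v]) < \infty$ for each $v \in \mathcal{V}_0$.

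The main obstacle is the vanishing of the diagonal term, which rests on $\zeta_w$ diverging at its abscissa of convergence. For Dirichlet series with positive terms this is the classical fact (cf.~Chapter 2 of \cite{hardy-riesz}) that, on the real axis, $\zeta_w$ is monotone in $s$ and tends to $+\infty$ as $s \searrow s_0$ unless it already converges at $s_0$. In the settings of interest here -- the Pearson--Bellissard spectral triples attached to the $(\mathcal{B}_\Lambda, w_\delta)$ considered later in the paper -- divergence at $s_0$ will hold; indeed, a short consistency check shows that if $\zeta_w(s_0) < \infty$, the formula \eqref{eq:Dixmier_w} would assign positive mass to individual finite paths and the resulting set function on $\mathcal{R}$ could not be additive, contradicting the measure hypothesis $\mu_w([\gamma]) < \infty$ being compatible across the one-step decomposition.
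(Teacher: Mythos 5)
Your proof is correct and follows essentially the same route as the paper's: Carath\'eodory's theorem, compactness of $X_{\mathcal{B}}$ and clopenness of cylinder sets to reduce $\sigma$-additivity to finite additivity, and the divergence $\zeta_w(s)\to\infty$ as $s\searrow s_0$ to discard the finitely many ``short'' terms in the numerator of \eqref{eq:Dixmier_w}; your one-step decomposition $[\gamma]=\bigsqcup_e[\gamma e]$ iterated to a common level is the same device as the paper's refinement of an arbitrary finite partition $[\gamma]=\bigsqcup_i[\lambda_i]$ into paths of length $L=\max_i|\lambda_i|$. One caveat: your closing ``consistency check'' for the divergence of $\zeta_w$ at $s_0$ is circular --- it rules out $\zeta_w(s_0)<\infty$ by appealing to the additivity you are in the middle of establishing, and the stated hypothesis $\mu_w([\gamma])<\infty$ alone does not exclude that case (if $\zeta_w(s_0)<\infty$ one still has $\mu_w([\gamma])\le 1$ for every $\gamma$, yet $\mu_w$ would be strictly superadditive because of the surviving diagonal term $w(\gamma)^{s_0}/\zeta_w(s_0)$). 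The paper simply asserts $\lim_{s\searrow s_0}\zeta_w(s)=\infty$ at this point without further argument (it is verified concretely for the weights $w_\delta$ in Theorem \ref{thm:abscissa-conv}), so you are no worse off, but you should either impose this divergence as a hypothesis or defer its verification to the application rather than claim to derive it from the other assumptions.
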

\begin{proof} This proof relies on Carath\'eodory's theorem.  Notice that 
\[\mathcal{F}:=\{ [\lambda]: \lambda \in F\mathcal{B} \} \]
is closed under finite intersections (if $[\lambda] \cap [\gamma] \not= \emptyset$, then either $\lambda$ is a sub-path of $\gamma$ or vice versa, and thus $[\lambda] \cap [\gamma] = [\gamma]$), and 
\[ [\lambda]^c = \bigsqcup_{|\lambda_i| = |\lambda|, \lambda_i \not= \lambda} [\lambda_i].\]
In other words, the complement of any element of $\mathcal{F}$ can be written as a finite disjoint union of elements of $\mathcal{F}$.

Since $\mathcal{F}$ generates the topology on $X_{\mathcal{B}}$, and $\mu_w([\gamma])$ is finite for all $[\gamma] \in \mathcal{F}$ by hypothesis, Carath\'eodory's theorem tells us that in order to show that $\mu_w$ determines a measure on $X_{\mathcal{B}}$, we merely need to check that $\mu_w$ is $\sigma$-additive on $\mathcal{F}$.  In fact, since the cylinder sets $[\gamma]$ are clopen, the fact that $X_{\mathcal{B}}$ is compact means that it is enough to check that $\mu_w$ is finitely additive on $\mathcal{F}$.  

We remark that since $s_0$ is the abscissa of convergence of $\zeta_w(s)$, we must have 
\begin{equation}
\label{eq:lim-delta-zeta}
\lim_{s \searrow s_0} \zeta_w(s) = \infty.
\end{equation}
Consequently, since $\mu_{w}([\gamma]) = \lim_{s\searrow s_0} \frac{\sum_{\lambda\in F_\gamma\mathcal{B} } w(\lambda)^s}{\zeta_w(s)} $, 
 in calculating $\mu_w([\gamma])$ we can ignore finitely many initial terms in the sum in the numerator.  In other words,  for any $L \in \N$, 
\begin{equation}
\label{eq:mu-without-finite-prefix}
\mu_w([\gamma]) = \lim_{s \searrow s_0} \frac{\sum_{\substack{\lambda \in F_\gamma \mathcal B, |\lambda| \geq L}} w(\lambda)^s}{\zeta_w(s)} 
\end{equation}
Thus, suppose that $[\gamma] = \bigsqcup_{i=1}^N [\lambda_i]$.  Write $L = \max_i |\lambda_i|$,  and for each $i$, write $[\lambda_i] = \bigsqcup_{\ell} [\lambda_{i, \ell}]$ where $|\lambda_{i, \ell}| = L$.  
If $\lambda\in F_\gamma \mathcal{B}$ with $|\lambda| \geq L$, then $\lambda_i$ is a sub-path  of $\lambda$ for precisely one $i$, and hence
\begin{equation*}
\begin{split}
\mu_w([\gamma]) &= \lim_{s \searrow s_0} \frac{\sum_{\substack{\lambda \in F_\gamma\mathcal{B}\\ |\lambda| \geq L}} w(\lambda)^s}{ \zeta_w(s)}= \lim_{s \searrow s_0} \sum_i \frac{\sum_{\lambda \in F_{\lambda_i} \mathcal{B}} w(\lambda)^s}{ \zeta_w(s)}  \\
&= \sum_i \mu_{w}([\lambda_i]) = \sum_{i, \ell} \mu_w([\lambda_{i, \ell}])  
\end{split}
\end{equation*}
For each fixed $i$, $\bigsqcup_\ell [\lambda_{i, \ell}] = [\lambda_i]$, so the same argument will show that $\mu_w([\lambda_i]) = \sum_{\ell} \mu_w([\lambda_{i, \ell}])$.  Thus, 
\[ \mu_w([\gamma]) = \sum_{i, \ell} \mu_w([\lambda_{i, \ell}]) = \sum_i \mu_w([\lambda_i]).\]
Since $\mu_w$ is finitely additive on $\mathcal{F}$, Carath\'eodory's theorem allows us to conclude that it gives a well-defined finite measure on $X_{\mathcal{B}}$.
\end{proof} 

\subsection{Properties of the $\zeta$-function and Dixmier trace}
\label{sec:zeta-dixmier}

{In this section, which involves some of the most intricate proofs in this paper,} we return to our focus on the even spectral triples $(C_{\text{Lip}}(X_{\mathcal{B}_\Lambda}), \mathcal{H}, \pi_\tau, D, \Gamma)$ associated to the weighted stationary $k$-Bratteli diagrams $(\mathcal{B}_\Lambda, w_\delta)$  of  Proposition~\ref{pr:delta-weight} above.
From now on, the $\zeta$-function and Dixmier trace of these spectral triples will be 
denoted by $\zeta_\delta$ and $\mu_\delta$ to emphasize that they depend on the choice of $\delta\in (0,1)$. {Similarly, we write $d_\delta$ for the ultrametric associated to $w_\delta$.}

We begin by showing that $\zeta_\delta$ has abscissa of convergence $\delta$.

\begin{thm}
\label{thm:abscissa-conv}
Let $\Lambda$ be a finite, strongly connected $k$-graph.  Fix $\delta \in (0,1)$ and suppose that Equation \eqref{eq:weight-diam} holds for the weight $w_\delta$ of Equation \eqref{eq:w-delta}. Then 
\[ \zeta_\delta(s) < \infty \quad\text{if and only if}\quad s > \delta.\]
\end{thm}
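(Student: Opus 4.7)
My plan is to obtain two-sided geometric estimates on $\zeta_\delta(s)$ that pin down its abscissa of convergence exactly. The first step is to group the sum $\zeta_\delta(s) = \sum_{\lambda \in F\mathcal B_\Lambda} w_\delta(\lambda)^s$ by the length $n = qk + t$ of $\lambda$ (with $0 \leq t \leq k-1$). Writing $R_{q,t} := \rho_1^{q+1} \cdots \rho_t^{q+1} \rho_{t+1}^q \cdots \rho_k^q$, the explicit formula \eqref{eq:w-delta} lets us factor
\[
\sum_{|\lambda| = n} w_\delta(\lambda)^s \;=\; R_{q,t}^{-s/\delta} \sum_{|\lambda| = n}(x^\Lambda_{s(\lambda)})^s,
\]
reducing the problem to understanding the growth in $n$ of the inner sum.

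The heart of the argument is the Perron-Frobenius identity at $s=1$. Since the adjacency matrices $A_i$ commute and share the eigenvector $x^\Lambda$ with $A_i x^\Lambda = \rho_i x^\Lambda$, the path-counting matrix $B_n := (A_1 \cdots A_k)^q A_1 \cdots A_t$ satisfies $B_n x^\Lambda = R_{q,t}\, x^\Lambda$. Translating the bijection $X_{\mathcal B_\Lambda} \cong \Lambda^\infty$ to finite paths, we get
\[
\sum_{|\lambda| = n} x^\Lambda_{s(\lambda)} \;=\; \mathbf{1}^T B_n x^\Lambda \;=\; R_{q,t}\, \mathbf{1}^T x^\Lambda \;=\; R_{q,t}
\]
(using $\sum_v x^\Lambda_v = 1$). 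To pass to general $s > 0$, I would use that $\Lambda^0$ is finite and all $x^\Lambda_v$ are strictly positive to produce constants $0 < c_1(s) \leq c_2(s) < \infty$ with $c_1(s)\, x^\Lambda_v \leq (x^\Lambda_v)^s \leq c_2(s)\, x^\Lambda_v$ for every $v$. Summing over $|\lambda|=n$ then gives the two-sided bound
\[
c_1(s)\, R_{q,t}^{1-s/\delta} \;\leq\; \sum_{|\lambda| = n} w_\delta(\lambda)^s \;\leq\; c_2(s)\, R_{q,t}^{1-s/\delta}.
\]

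Summing over $q$ and $t$ yields
\[
\zeta_\delta(s) \;\asymp\; \Bigl(\sum_{t=0}^{k-1} (\rho_1 \cdots \rho_t)^{1-s/\delta}\Bigr) \sum_{q=0}^\infty \rho^{q(1-s/\delta)},
\]
where $\rho = \rho_1 \cdots \rho_k$. Remark~\ref{rmk:weight-diam} ensures $\rho > 1$ under the standing hypothesis that $w_\delta$ satisfies \eqref{eq:weight-diam}, so the geometric series in $q$ converges precisely when $1 - s/\delta < 0$, i.e.\ $s > \delta$. For $s \leq 0$ the series diverges trivially, since $w_\delta(\lambda) < 1$ forces $w_\delta(\lambda)^s \geq 1$ on the infinite index set $F\mathcal B_\Lambda$. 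I expect the main subtlety to be the uniform comparison $(x^\Lambda_v)^s \asymp x^\Lambda_v$: this is routine because $\Lambda^0$ is finite, but it is exactly what lets us avoid any delicate spectral analysis of $A$ (such as periodicity issues coming from the non-primitive irreducible case) and reduce cleanly to the $s=1$ eigenvector identity, from which the threshold $s = \delta$ emerges from the simple scalar condition $\rho^{1-s/\delta} < 1$.
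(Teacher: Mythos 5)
Your proof is correct, and it takes a genuinely different and substantially simpler route than the paper's. The key computation checks out: the number of length-$n$ paths in $F\mathcal{B}_\Lambda$ with range $a$ and source $b$ is indeed $A^qA_1\cdots A_t(a,b)$, so $\sum_{|\lambda|=n}x^\Lambda_{s(\lambda)}=\mathbf{1}^TA^qA_1\cdots A_t\,x^\Lambda=R_{q,t}$ by the common-eigenvector identity and the normalization $\sum_v x^\Lambda_v=1$; and the comparison $(x^\Lambda_v)^s\asymp x^\Lambda_v$ with constants $\min_v (x^\Lambda_v)^{s-1}$ and $\max_v(x^\Lambda_v)^{s-1}$ is valid for every fixed real $s$ because $\Lambda^0$ is finite and $x^\Lambda$ is strictly positive. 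This yields exact two-sided geometric bounds $\zeta_\delta(s)\asymp\bigl(\sum_t(\rho_1\cdots\rho_t)^{1-s/\delta}\bigr)\sum_q\rho^{q(1-s/\delta)}$, and since $\rho>1$ (guaranteed by Equation \eqref{eq:weight-diam} via Remark \ref{rmk:weight-diam}), the threshold $s=\delta$ drops out of the scalar condition $\rho^{1-s/\delta}<1$. The paper instead estimates the matrix entries $A^q(a,b)$ themselves: convergence for $s>\delta$ uses the entrywise bound $A^q(a,b)/\rho^q\leq\max_v x^\Lambda_v/\min_v x^\Lambda_v$, while divergence for $s\leq\delta$ requires the Jordan form of $A$, the periodic limits $A^{(j)}=\lim_m A^{mp+j}/\rho^{mp+j}$, and a positivity argument to show $A^{(j)}(a,b)>0$ for some $(a,b)$. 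Your observation that one only ever needs the $x^\Lambda$-weighted row sums of $A^qA_1\cdots A_t$ — which are computed exactly by the eigenvector identity — bypasses all of that spectral analysis, including the delicacies of the non-primitive irreducible case. The only cosmetic quibble is that no appeal to the bijection $X_{\mathcal{B}_\Lambda}\cong\Lambda^\infty$ is needed; the count is purely a Bratteli-diagram path count.
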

\begin{proof}
In order to explicitly compute $\zeta_\delta(s)$, we first observe that we can rewrite 
\[\zeta_\delta(s) = \sum_{\lambda \in F\mathcal{B}_\Lambda} w_\delta(\lambda)^s = \sum_{n \in \N} \sum_{\lambda \in F^n\mathcal{B}_\Lambda} w_\delta(\lambda)^s = \sum_{q\in \N} \sum_{t=0}^{k-1} \sum_{\lambda \in F^{qk+t} \mathcal{B}_\Lambda} w_\delta (\lambda)^s.\]
Now, write $A := A_1 \cdots A_k$ {for the product of the adjacency matrices of $\Lambda$}.  If $t\in \{0,1, \ldots ,k-1 \}$ is fixed and  $n = qk + t$, then the number  of paths in $F^n(B_\Lambda)$ with source vertex $b$ and range vertex $a$  is given by
\[  A^q A_1 \cdots A_t (a,b) ,\]
where $F^n(\mathcal{B}_\Lambda)$ is the set of finite paths of $\mathcal{B}_\Lambda$ with length $n$.
Thus, writing $\rho := \rho_1 \cdots \rho_k$ {for the spectral radius of $A$, the formula for $w_\delta$ given in Equation \eqref{eq:w-delta} implies that}

\begin{equation}
\label{eq-similar-num}
\zeta_\delta(s) = \sum_{t=0}^{k-1} \frac{1}{{(\rho_1 \cdots \rho_t)^{s/\delta} }}\sum_{q \in \N} \sum_{a,b\in \mathcal{V}_0} A^q A_1 \cdots A_t (a,b) \frac{(x^\Lambda_b)^s }{\rho^{qs/\delta} } .\\
\end{equation}
Since all terms in this sum are non-negative, the series $\zeta_\delta(s)$ converges iff it converges absolutely; hence, rearranging the terms in the sum does not affect the convergence of $\zeta_\delta(s)$.
Thus, we can rewrite 
\begin{equation}
\label{eq-explicit-zeta}
\zeta_\delta(s)= \sum_{t=0}^{k-1} \sum_{a,b, z\in \mathcal{V}_0} \frac{A_1 \cdots A_t(z,b)}{(\rho_1 \cdots \rho_t)^{s/\delta}} \sum_{q \in \N} A^q(a,z)\frac{(x^\Lambda_b)^s}{\rho^{qs/\delta}} \\  .
\end{equation}
In order to show that $\zeta_\delta(s)$ converges for $s > \delta$, we begin by considering the sum 
\[ \sum_{q \in \N} \frac{A^q(a,z) }{(\rho^{s/\delta})^q}.\]
Since $A$ has a positive right eigenvector of eigenvalue $\rho$ (namely $x^\Lambda$), 
Corollary 8.1.33 of \cite{horn-johnson-matrix-analysis} implies that 
\[ \frac{A^q(a,z)}{\rho^q} \leq \frac{\max \{ x^\Lambda_b\}_{b \in \mathcal{V}_0}}{\min \{ x^\Lambda_b\}_{b\in \mathcal{V}_0}} \ \forall \ q \in \N\backslash \{0\}.\]
Consequently, 
\[\sum_{q\in \N} \frac{A^q(a,z)}{\rho^q \rho^{(s/\delta-1)q}} \leq \delta_{a,z} + \frac{\max \{ x^\Lambda_b\}_{b\in \mathcal{V}_0}}{\min \{ x^\Lambda_b\}_{b\in \mathcal{V}_0}} \sum_{q \geq 1} \frac{1}{\rho^{(s/\delta-1)q}}.\]
If $s>\delta$, then our hypothesis that $\rho >1$ implies that $1/\rho^{(s/\delta -1)} \in (0,1),$ and thus 
$\sum_{q \geq 1}{\rho^{(1-s/\delta)q}}$ converges to $ (1-\rho^{(s/\delta-1)})^{-1}-1$.  
Consequently, 
\[ \sum_{q \in \N} \frac{A^q(a,z) }{(\rho^{s/\delta})^q} < \infty,\]
and hence $\zeta_\delta(s) < \infty$, for any $s > \delta$ since $\mathcal{V}_0$ is a finite set.

To see that $\zeta_\delta(s) = \infty$ whenever $s \leq \delta$, we have to work harder.
Theorem 8.3.5 of \cite{horn-johnson-matrix-analysis} implies that the Jordan form of $A$ is 
\[J= \begin{pmatrix} \rho &0&0&0&0&\ldots&0&0&0 & 0 & 0 & 0 &0\\ 
0 & \ddots & 0 & 0 & 0&   \ldots & 0 & 0 &0 & 0 &0 &0 &0 \\
0 & 0 & \rho & 0 & 0 & \ldots & 0 & 0 & 0 & 0 &0 &0 &0\\
0 & 0 & 0 &\omega_1 \rho &0&\ldots&0&0&0& 0 &0 &0&0  \\ 
0 & 0 & 0 & 0 & \ddots & 0 & \ldots & 0 & 0 & 0  &0 &0&0 \\
0 & 0 & 0 & 0& 0& \omega_1 \rho & 0 & \ldots & 0 & 0 & 0 &0 &0\\
0 & 0 &  0 & 0 & 0 &0&\omega_2 \rho &0&\ldots&0 & 0&0 &0\\ 
 \vdots &\ldots&\ldots& \ldots & \ldots  & \ldots & \ldots &\ddots &\ldots&\ldots &\ldots&\ldots & \vdots\\ 
  0 & 0 &0 & 0 & 0 &0&0&0&\omega_{p-1} \rho &0  &\ldots&0&0\\ 
   0& 0&0 &0 &0&0&0 & 0 &0&J_{p+1}&0& 0&0\\
   \vdots &\ldots&\ldots& \ldots & \ldots & \ldots&\ldots&\ldots&\ldots  &\ldots &\ddots &\ldots  &\vdots \\ 
 0 & 0 &0&0&0 & 0 &0&\ldots& \ldots & \ldots  &0&J_{m-1}&0\\
 0 & 0 &0&0&0 & 0 &0&\ldots&\ldots &\ldots  &0&0&J_{m}\\
 \end{pmatrix},
\]
where $p$ is the period of $A$, $\omega_i$ is a $p$th root of unity for each $i$,  each  eigenvalue $\omega_i \rho$ is repeated along the diagonal $m_i$ times, and  $J_i$, $i=p+1, \ldots, m$ are Jordan blocks -- that is, upper triangular matrices whose constant diagonal is given by an eigenvalue $\alpha_i$ of $A$ (with $|\alpha_i| < \rho)$ and which have a  superdiagonal of 1s as  the only other nonzero entries.
 Thus, for each $1 \leq a, b \leq |\mathcal{V}_0| ,$
  \begin{equation}
  J^q(a,b)  \in \{ 0 \} \cup \{\rho^q\} \cup \{  \rho^q \omega_i^q: 1 \leq i \leq p-1 \} \cup \left\{ \frac{1}{\alpha_i^\ell} \binom{q}{\ell} \alpha_i^q : 0 \leq \ell \leq \dim J_i \right\}.\label{eq:jordan-powers}
 \end{equation}
Consequently, 
  \[ \left| \frac{1}{\rho^q } J^q(a, b) \right| \in \{ 0 , 1 \} \cup \left\{ \beta_i  \frac{1}{ | \alpha^\ell_i|} \binom{q}{\ell}: \beta_i =\frac{ |\alpha_i|}{\rho}  < 1 , \ 0 \leq \ell \leq \dim J_i \right\}.\]

Thanks to \cite{rothblum81} and \cite[Chapter 2]{BP94}, 
we know that since $A$ has a positive eigenvector (namely $x^\Lambda$) of eigenvalue $\rho$, $\lim_{m \to \infty} \frac{1}{\rho^{mp + j}} A^{mp + j}$ exists for all $0 \leq j \leq p-1$, where $p$ denotes the period of $A$.  Moreover, if we write 
\begin{equation} A^{(j)} =\lim_{m \to \infty} \frac{1}{\rho^{mp + j}} A^{mp + j}\label{eq:A-j}
\end{equation}
 for this limit, and $\tau$ for the maximum modulus of the eigenvalues $\alpha_i$ of $A$ with $|\alpha_i| < \rho$, 
\[ \forall \ \left( \frac{\tau}{\rho} \right)^p < \beta < 1, \ \exists \ M_{\beta, j} \in \R^+ \ \text{s.t. } \forall \ m \in \N, \  \left| \frac{A^{mp+j}(a,b)}{\rho^{mp+j}} - A^{(j)}(a,b) \right| \leq M_{\beta, j} \beta^m .\]
 Thus, for all $ m\in \N$ and all $0 \leq j \leq p-1$, and all such $\beta$,
\begin{equation} \frac{A^{mp + j}(a,b)}{\rho^{mp +j}} \geq A^{(j)}(a,b) - M_{\beta,j} \beta^m \qquad \text{ for all } m \in \N. \label{eq:zeta-lower-bd}
\end{equation}

Reordering the summands of $\sum_{q \in \N} A^q(a,b) (\rho^{-s/\delta})^q$, we see that 
\[\sum_{q \in \N} A^q(a,b) (\rho^{-s/\delta})^q = \sum_{j =0}^{p-1} \sum_{m \in \N} A^{mp + j}(a,b) (\rho^{-s/\delta})^{mp + j}.\]

Now, fix $j \in \{ 0, \ldots, p-1\}$ and consider the sum 
\begin{align*} \sum_{m \in \N} A^{mp + j}(a,b) (\rho^{-s/\delta})^{mp + j} & = \sum_{m \in \N} \frac{A^{mp + j}(a,b)}{\rho^{mp+j}} \left(\frac{1}{\rho^{s/\delta-1}}\right)^{mp + j} \\
& > \frac{1}{\rho^{(s/\delta - 1)j}} \sum_{m \in \N} ( A^{(j)}(a,b) - M_{\beta,j} \beta^m )  \left(\frac{1}{\rho^{s/\delta-1}}\right)^{pm} .
\end{align*}
If $A^{(j)}(a,b) >0$, the fact that $\beta < 1$ and $M_{\beta, j} > 0$ implies that there exists $M$ such that for $m > M$, $A^{(j)}(a,b) > M_{\beta, j} \beta^m$.  Consequently, if we define 
\[ K = \frac{1}{\rho^{(s/\delta - 1)j}} \sum_{m=0}^{M} \frac{A^{(j)}(a,b) - M_{\beta, j} \beta^m}{\rho^{(s/\delta-1)pm}},\]
and write $\nu  = A^{(j)}(a,b) - M_{\beta, j} \beta^M > 0$, the fact that $\{ M_{\beta, j} \beta^m\}_{m \in \N}$ is a decreasing sequence implies that  
\[ \sum_{m \in \N} A^{mp + j}(a,b) (\rho^{-s/\delta})^{mp + j} > K + \frac{\nu}{\rho^{(s/\delta - 1)j}} \sum_{m > M}  \left(\frac{1}{\rho^{s/\delta-1}}\right)^{pm} .\]
Since $\rho > 1$ and $s \leq \delta$,  $\rho^{(1-s/\delta)p} \geq 1;$ consequently, the series $\sum_{m> M}  (\rho^{(1-s/\delta)p})^m$ diverges to infinity.  The fact that $K, \nu$ are finite now implies that  $\sum_{m \in \N} A^{mp + j}(a,b) (\rho^{-s/\delta})^{mp + j} $ also diverges to infinity if $A^{(j)}(a,b)  > 0$.

Now, we show that for each  $j$, there must exist  some $(a, b) \in \mathcal{V}_0$ such that $A^{(j)}(a,b) > 0$. 
Recall that $x^\Lambda$ is an eigenvector for $A$, and consequently for $A^{mp+j}$.  Thus,
\[\sum_{b \in \mathcal{V}_0} A^{mp +j}(a,b) x^\Lambda_b = \rho^{mp+j} x^\Lambda_a.\]
Since $x^\Lambda$ is a positive eigenvector, there exists $\alpha > 0$ such that $x^\Lambda_a > \alpha$ for all $a \in \mathcal{V}_0$.  Moreover, $x^\Lambda$ is a unimodular eigenvector, so $0 < x^\Lambda_b < 1$ for all $b \in \mathcal{V}_0$. Thus the above equation becomes 
\[ \rho^{mp+j} \alpha < \sum_{b \in \mathcal{V}_0} A^{mp +j}(a,b) x^\Lambda_b < \sum_{b \in \mathcal{V}_0} A^{mp+j}(a,b).\]
Consequently, for each $a \in \mathcal{V}_0$ and each $m \in \N$ there exists at least one vertex $b$ such that 
\[\frac{A^{mp+j}(a,b)}{\rho^{mp+j}} >\frac{\alpha}{\#(\mathcal{V}_0)}.\]  
Moreover, since $\#(\mathcal{V}_0) < \infty$, the definition of the limit $A^{(j)}$ implies that there exists $N\in \N$ such that whenever $m \geq N$ we have 
\[ A^{(j)}(a,b) > \frac{A^{mp+j}(a,b)}{\rho^{mp+j}} - \frac{\alpha}{2\#(\mathcal{V}_0)}\]
for all $a, b \in \mathcal{V}_0$.
Thus, if we fix $a$ and $m \geq N$, and choose $b$ such that $\frac{A^{mp+j}(a,b)}{\rho^{mp+j}} >\frac{\alpha}{\#(\mathcal{V}_0)},$
we have 
\begin{equation}
\label{eq:Aj-positive}
\forall \, \ 1 \leq j \leq p, \ \forall \,  a \in \mathcal V_0, \ \exists \ b \in \mathcal V_0 \ \text{s.t.}\qquad 
A^{(j)}(a,b)  > \frac{\alpha}{2\#(\mathcal{V}_0)} > 0.\end{equation}

Finally, recalling that the matrices $A_i$ commute, we observe that 
\[\sum_{z\in \mathcal{V}_0} A^{mp+j}(a,z) A_1 \cdots A_t(z,b) = (A_1 \cdots A_t) A^{mp +j}(a,b) = \sum_{z\in \mathcal{V}_0}A_1 \cdots A_t(a,z) A^{mp+j}(z,b).\]
Using this, we rewrite
\[ \zeta_\delta(s) = \sum_{a, b, z \in \mathcal{V}_0} \sum_{t=0}^{k-1} \frac{ A_1 \cdots A_t (a,z) (x^\Lambda_b)^s}{(\rho_1 \cdots \rho_t)^{s/\delta}} \sum_{j=0}^{p-1} \sum_{m\in \N} \frac{A^{mp + j}(z,b)}{\rho^{(mp+j)s/\delta}}.\]

It now follows from our arguments above that $\zeta_\delta(s)$
diverges whenever $s \leq \delta$.  To convince yourself of this, it may help to recall that $x^\Lambda_b$ is positive for all vertices $b$, and that (since $A_1 \cdots A_t(a,z)$ represents the number of paths of degree $(\overbrace{1, \ldots, 1}^t, 0, \ldots, 0)$ with source $z$ and range $a$) $\sum_a A_1 \cdots A_t(a,z) $ must be strictly positive for each $t$ since $\Lambda$ is source-free.  In other words, $\zeta_\delta(s)$ is computed by taking a bunch of sums that diverge to infinity when $s \leq \delta$, possibly adding some other positive numbers, multiplying the lot by some positive scalars, and adding the results.
 
   Consequently, $\delta$ is the abscissa of convergence of the $\zeta$-function $\zeta_\delta(s)$, as claimed.
\end{proof}
In the terminology of \cite{pearson-bellissard}, the following Theorem establishes the $\zeta$-regularity of the Pearson-Bellissard spectral triple associated to the ultrametric Cantor sets $(X_{\mathcal B_\Lambda}, d_{\delta})$.
\begin{thm}
\label{thm:dixmier-measure}
Let $\Lambda$ be a finite, strongly connected $k$-graph and fix $\delta \in (0,1)$. Suppose moreover that Equation \eqref{eq:weight-diam} holds on the associated ultrametric Cantor set $(X_{\mathcal B_\Lambda}, d_{\delta})$.
Then the associated Dixmier trace induces a finite measure $\mu_\delta$ on $X_{\mathcal{B}_\Lambda}$, where
\[
\mu_\delta([\gamma]):=\lim_{s \searrow \delta} \frac{\sum_{\lambda \in F_\gamma \mathcal{B}} w_\delta(\lambda)^s}{\zeta_\delta(s)}\quad \text{for $\gamma\in F\mathcal{B}_\Lambda$.}
\]
\end{thm}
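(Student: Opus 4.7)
The plan is to invoke Proposition~\ref{pr:dixmier-trace-measure}: by hypothesis the weight $w_\delta$ satisfies \eqref{eq:weight-diam}, and by Theorem~\ref{thm:abscissa-conv} the abscissa of convergence of $\zeta_\delta$ equals $\delta<\infty$. Consequently it suffices to prove $\mu_\delta([\gamma])<\infty$ for every cylinder set $[\gamma]\subseteq X_{\mathcal{B}_\Lambda}$. Once this is done, Proposition~\ref{pr:dixmier-trace-measure} produces a unique finite Borel measure on $X_{\mathcal{B}_\Lambda}$ agreeing on cylinder sets with the prescribed limit.

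Fix $\gamma\in F\mathcal{B}_\Lambda$ with $|\gamma|=m=q_0 k+t_0$ and $v:=s(\gamma)\in\Lambda^0$. Every $\lambda\in F_\gamma \mathcal{B}_\Lambda$ factors uniquely as $\lambda=\gamma\alpha$ where $\alpha$ is either $v$ or a path with $r(\alpha)=v$. Grouping by $\ell:=|\alpha|$ and $b:=s(\alpha)$, and using the cyclic structure of $\mathcal{B}_\Lambda$ (at level $n$, edges are counted by $A_{n\bmod k}$), the number of such $\alpha$ of length $\ell$ ending at $b$ is the $(v,b)$-entry of a product of the $A_i$'s of the form $A_{t_0+1}\cdots A_k \cdot A^n \cdot A_1\cdots A_{t'}$ for appropriate $n$ and $t'$. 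Combined with the explicit formula \eqref{eq:w-delta} for $w_\delta$, this yields an expression for $\sum_{\lambda\in F_\gamma\mathcal{B}_\Lambda} w_\delta(\lambda)^s$ that is formally analogous to \eqref{eq-explicit-zeta}, except the outer vertex $a$ is fixed at $v$, the matrix product is cyclically shifted by $t_0$, and there is an overall prefactor of the form $\rho^{-q_0 s/\delta}$ (up to adjustments coming from $t_0$).

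Next I would extract the dominant divergent behavior of both the numerator and $\zeta_\delta(s)$ as $s\searrow\delta$. Exactly as in the proof of Theorem~\ref{thm:abscissa-conv}, I would invoke the Perron--Frobenius-type limits $A^{mp+j}/\rho^{mp+j}\to A^{(j)}$ from \eqref{eq:A-j}, together with the positivity statement \eqref{eq:Aj-positive}, to show that each interior sum of the form $\sum_{n\ge 0} A^n(a,b)\rho^{-ns/\delta}$ decomposes as
\[
\sum_{j=0}^{p-1} A^{(j)}(a,b)\,\rho^{j(1-s/\delta)}\cdot\frac{1}{1-\rho^{p(1-s/\delta)}} \ +\ R_{a,b}(s),
\]
where $R_{a,b}(s)$ remains bounded as $s\searrow\delta$ and $p$ denotes the period of $A=A_1\cdots A_k$. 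Both $\zeta_\delta(s)$ and the numerator $\sum_{\lambda\in F_\gamma \mathcal{B}_\Lambda} w_\delta(\lambda)^s$ therefore admit an asymptotic expansion of the form $C_\bullet\,(1-\rho^{p(1-s/\delta)})^{-1}+O(1)$ with finite, strictly positive coefficients $C_\bullet$ (the denominator constant being nonzero by \eqref{eq:Aj-positive}, which guarantees that at least one $A^{(j)}(v,b)>0$). Taking the quotient and letting $s\searrow\delta$ yields a finite nonnegative limit, establishing $\mu_\delta([\gamma])<\infty$ as required.

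The main obstacle is the bookkeeping rather than any conceptual difficulty: one must simultaneously track residues modulo $k$ (to expand $w_\delta$ on paths of length $m+\ell$ via \eqref{eq:w-delta}) and modulo $p$ (to apply the Perron--Frobenius asymptotics), and make sure that the cyclic shift by $t_0$ in the matrix product is accounted for in the leading coefficient of the numerator. Apart from this indexing, the argument is a direct reuse of the Jordan-form and Perron--Frobenius analysis already developed in the proof of Theorem~\ref{thm:abscissa-conv}.
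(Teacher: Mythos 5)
Your proposal is correct and follows essentially the same route as the paper's proof: reduce to finiteness of $\mu_\delta([\gamma])$ via Proposition~\ref{pr:dixmier-trace-measure}, then decompose both $\mathcal{N}_\gamma(s)=\sum_{\lambda\in F_\gamma\mathcal{B}}w_\delta(\lambda)^s$ and $\zeta_\delta(s)$ into a dominant term of the form $C_\bullet(1-\rho^{p(1-s/\delta)})^{-1}$ plus a remainder that stays bounded as $s\searrow\delta$, using the limits $A^{(j)}$ (equivalently, the Jordan-form coefficients $c_{z,v;j}$) and the positivity statement \eqref{eq:Aj-positive} to see that the leading coefficients are finite and strictly positive. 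The paper carries out the same bookkeeping you describe, splitting into the cases $t_0=0$ and $t_0\neq 0$.
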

\begin{proof}  Thanks to 
Proposition \ref{pr:dixmier-trace-measure}, it is enough to show that the limit $\mu_\delta([\gamma])$ is finite for all $ \gamma \in F\mathcal{B}_\Lambda$.  To this end,
we begin by computing a more explicit expression for $\zeta_\delta(s)$ when $s >\delta$.
Recall from our computations in \eqref{eq:jordan-powers}  of the Jordan form $J$ of $A$  that  for any $z,v \in \mathcal{V}_0$ we can find constants $c_i^{z,v}$ and polynomials $P_i^{z,v}$ such that 
for any $n\in \N$, we have 
\begin{equation}
\label{eq-analog-oflemma-3-6-of-JS-all-cases-0}
A^n(z,v) = c_{1}^{z,v}\rho^n
 + c_{2}^{z,v} \omega_1^n \rho^n
  + \cdots +  c_{p}^{z,v} \omega_{p-1}^{n} \rho^n
    +  \sum_{i=p+1}^m P_i^{z,v}(n) \alpha_i^n,
\end{equation}
where $\omega_i$ is a  $p$th root of unity for all $i$ (denoting by $p$ the period of $A$) and 
each $\alpha_i$ is an eigenvalue of $A$ with $|\alpha_i| <\rho$.  In a bit more detail, writing $A = C^{-1} J C$ for some invertible matrix $C$,
we have 
\[ c_i^{z,v} = \sum_{j=m_0 + \cdots + m_{i-1}+1}^{m_0 + \cdots + m_i}C^{-1}(z,j) C(j, v),\]
 and 
\[ P_i^{z,v}(n) = \sum_{(a,b): J_i^n(a,b)\not= 0} C^{-1}(z,a) C(b,v) \frac{1}{\alpha_i^{b-a}} \binom{n}{b-a}.\]
Recall that since $J_i$ is a Jordan block, $J_i^n(a,b) = 0$ unless $a \leq b$.  

Equivalently, setting $c_{z,v;n} = c_{1}^{z,v}
 + c_{2}^{z,v} \omega_1^n 
  + ....+  c_{p}^{z,v} \omega_{p-1}^{n} $, we have

\begin{equation}
\label{eq-analog-oflemma-3-6-of-JS-all-cases}
A^n(z,v) = c_{z,v;n}\rho^n
    +  \sum_{i=p+1}^m P_i^{z,v}(n) \alpha_i ^n.
\end{equation}

Therefore, 
 \begin{align*}
\zeta_\delta(s) &= \sum_{t=0}^{k-1}\sum_{a,b,z\in\mathcal{V}_0}\frac{A_1 \cdots A_t(z,b)}{{(\rho_1 \cdots \rho_t)^{s/\delta} }}  \sum_{q \in \N} A^q(a,z) \frac{(x^\Lambda_b)^s}{\rho^{qs/\delta}} \\ 
 &= \sum_{t=0}^{k-1} \sum_{a,z,b\in \mathcal{V}_0} \frac{A_1 \cdots A_t(z,b)}{(\rho_1 \cdots \rho_t)^{s/\delta}} (x^\Lambda_b)^s\sum_{q \in \N}\frac{c_{a,z;q} }{\rho^{(s/\delta-1)^q}} \\
 & \quad + \sum_{t=0}^{k-1} \sum_{a,b, z\in \mathcal{V}_0} \frac{A_1 \cdots A_t(z,b)}{{(\rho_1 \cdots \rho_t)^{s/\delta} }} \sum_{q \in \N}  \left(\sum_{i=p+1}^m P_i^{a,z}(q) \alpha_i^q\right)\frac{(x^\Lambda_b)^s}{\rho^{qs/\delta}} .
 \end{align*}
 
 Now, define 
 \[
 \tilde{c}_{q}(s) =\sum_{t=0}^{k-1}\sum_{a,b, z\in \mathcal{V}_0} \frac{A_1 \cdots A_t(z,b)}{{(\rho_1 \cdots \rho_t)^{s/\delta} }}c_{a,z;q}  {(x^\Lambda_b)^s},\quad \tilde{Q}_i(q,s)= \sum_{t=0}^{k-1} \sum_{a,b, z\in \mathcal{V}_0}\frac{A_1 \cdots A_t(z,b)}{{(\rho_1 \cdots \rho_t)^{s/\delta} }} P_i^{a,z}(q)  {(x^\Lambda_b)^s}.
\]
Since $\mathcal{V}_0$ is a finite set, we can rewrite $\zeta_\delta(s)$ as
 \begin{equation}\label{eq:zeta_1}
 \zeta_\delta(s) =  \sum_{q\in \N} \rho^{q(1-s/\delta)}  \tilde{c}_{q}(s)  + \sum_{q\in \N} {\sum_{i=p+1}^m} \tilde{Q}_i(q,s) \left(\frac{\alpha_i}{\rho^{s/\delta}}\right)^q.
 \end{equation}
 
We now observe that the series $\sum_{q\in \N} \tilde{c}_q(s,t) \rho^{q(1-s/\delta)} $ converges by the Root Test.  To be more precise,     the fact that each $\omega_i$ is a  $p$th root of unity implies that 
\[ c_{a,z; q} = c_{a,z; q+p} \ \forall\ q \in \N.\]
Moreover, if we consider the limit 
$A^{(j)}(z,v) = \lim_{p \to \infty} \frac{A^{mp+j}(z,v)}{\rho^{mp +j}}, $
Equation \eqref{eq-analog-oflemma-3-6-of-JS-all-cases} implies that 
\begin{equation}
\label{eq:Aj-c-coeffs}
A^{(j)}(z,v) = c_{z,v;j}.
\end{equation}
Consequently, $c_{z,v;j}$ is a non-negative real number for all $z, v \in \mathcal V_0$.  Moreover, Equation \eqref{eq:Aj-positive} implies that for all $1 \leq j \leq m$ and all  $z \in \mathcal V_0$, there exists $v \in \mathcal V_0$ such that $c_{z,v; j} \not= 0$.

Thus,
\begin{align*}
\sum_{q\in\N} c_{a,z;q} \rho^{q(1-s/\delta)} &= \sum_{j=0}^{p-1} c_{a,z; j} \sum_{l\in \N} \rho^{(lp+j)(1-s/\delta)} = \sum_{j=0}^{p-1} c_{a,z; j} \rho^{j(1-s/\delta)} \sum_{l\in \N} \rho^{lp(1-s/\delta)}\\
&= \sum_{j=0}^{p-1} c_{a,z; j} \rho^{j(1-s/\delta)} \frac{1}{1-\rho^{p(1-s/\delta)}};
\end{align*}
the last equality follows because 
$\rho^{1-s/\delta} < 1$ whenever $s > \delta$, and hence $\sum_l \left(\rho^{p(1-s/\delta)})\right)^l $ is a geometric series with ratio $\rho^{p(1-s/\delta)} < 1$.  Thus, the first term of $\zeta_\delta(s)$ in \eqref{eq:zeta_1},
 \begin{align*}  \sum_{q\in \N} \tilde{c}_q(s) \rho^{q(1-s/\delta)} & =  \sum_{t=0}^{k-1}\sum_{a,b,z\in \mathcal{V}_0} \frac{A_1 \cdots A_k(z,b)}{(\rho_1 \cdots \rho_t)^{s/\delta}} (x^\Lambda_b)^s \sum_{j=0}^{p-1}c_{a,z; j} \rho^{j(1-s/\delta)}  \frac{ 1}{1-\rho^{p(1-s/\delta)}}\end{align*}
 is finite for any $s > \delta$. Moreover, the fact that $\zeta_\delta(s) $ is finite implies that, for any  $s > \delta$, 
 \begin{align*}
 {R}_{\zeta}(s)&  :=  \zeta_\delta(s) -  \sum_{q \in \N} \tilde{c}_q(s)\rho^{q(1-s/\delta)}  \\
 &= \sum_{q\in \N} {\sum_{i=p+1}^m} \tilde{Q}_i(q,s) \left(\frac{\alpha_i}{\rho^{s/\delta}}\right)^q 
\end{align*}
 is also finite.  Thus, we have 
 \begin{equation}
 \label{eq:rewritten-zeta}
 \begin{split}
 (1-\rho^{p(1-s/\delta)}) \zeta_\delta(s)&= \sum_{a,b,z\in \mathcal{V}_0}\sum_{t=0}^{k-1}  \frac{A_1 \cdots A_t(z,b)}{{(\rho_1 \cdots \rho_t)^{s/\delta} }}(x^\Lambda_b)^s  \sum_{j=0}^{p-1} c_{a,z;j} \rho^{j(1-s/\delta)}  + (1-\rho^{p(1-s/\delta)})  {R}_{\zeta}(s).
 \end{split} 
 \end{equation}

Recall that $F_\gamma\mathcal{B}_\Lambda$ is the set of $\lambda \in F\mathcal{B}_\Lambda$ such that $\lambda$ is an extension of $\gamma$, and  the formula for the measure $\mu_\delta$ is given on cylinder sets by 
\[ \mu_\delta([\gamma]) =  \lim_{s \searrow \delta} \frac{\sum_{\lambda \in F_\gamma \mathcal{B}_\Lambda} w_\delta(\lambda)^s}{\zeta_\delta(s)}=: \lim_{s \searrow \delta}  \frac{\mathcal{N}_\gamma(s)}{\zeta_\delta(s)} \]

To compute $\mu_\delta([\gamma])$, then, we will begin by rewriting
$
\mathcal{N}_\gamma (s)$
along the same lines as the expression of  $\zeta_\delta(s)$ in Equation \eqref{eq:rewritten-zeta}. This will simplify the computation of $\mu_\delta([\gamma]).$
 
Assume that $\gamma$ has $n_0=q_0k+t_0$ edges, for some $q_0, t_0 \in \N,$ $0 \leq t_0 <k$. For bookkeeping's sake, it is now useful to distinguish two cases: $t_0 =0$ and $t_0 \not=0$. We detail the case $t_0=0$ below; the case $t_0 > 0$ is similar but requires more bookkeeping.

For the case $t_0=0$, observe that any path $\eta \in F_\gamma\mathcal{B}_\Lambda $ can be uniquely realized as $\eta = \gamma \eta'$.  We will group the paths $\eta$ according to the number of edges in  $\eta'$.  Following the same arguments that led us to the formula \eqref{eq-explicit-zeta} for $\zeta_\delta(s)$, we see that 
\begin{align*}
\mathcal{N}_\gamma(s) &=\sum_{\eta\in F_\gamma\mathcal{B}_\Lambda} w_\delta(\eta)^s=\sum_{n\in \N}\sum_{\eta\in F^{q_0 k+n}_\gamma \mathcal{B}_\Lambda} w_\delta(\eta)^s \\
& = \sum_{t=0}^{k-1}\sum_{q \in \N} \sum_{z, b \in \mathcal{V}_0} A^q (s(\gamma), z) (x^\Lambda_{b})^s  \frac{A_1 \cdots A_t(z,b)}{(\rho_1 \cdots \rho_t)^{s/\delta}}\frac{1}{\rho^{(q+q_0)s/\delta} } \\
 & =\frac{1}{\rho^{q_0 s/\delta }}\sum_{t=0}^{k-1} \sum_{b,z\in\mathcal{V}_0}\frac{A_1 \cdots A_t(z,b)}{(\rho_1 \cdots \rho_t)^{s/\delta}}(x^\Lambda_b)^s\sum_{q \in \N} A^q(s(\gamma), z)   \frac{1}{\rho^{qs/\delta} }.\\
\end{align*}
Note that the only differences between the expression above and the formulation of $\zeta_\delta(s)$ given in Equation \eqref{eq-similar-num} are the initial factor of $\rho^{-q_0 s/\delta}$ and  that in the expression for $\mathcal{N}_\gamma(s)$, there is no summation on $s(\gamma)$ (this was $a \in \mathcal{V}_0$ in Equation \eqref{eq-similar-num}).
  
 Therefore, by  using the expression for $A^q(s(\gamma), z)$ given by Equation \eqref{eq-analog-oflemma-3-6-of-JS-all-cases} and going through the same 
formal derivations as we did for  $\zeta_\delta(s)$, we get

\begin{equation}\label{eq:num_1}
\begin{split}
(1-\rho^{p(1-s/\delta)}) \mathcal{N}_\gamma(s)&=  \frac{1}{\rho^{q_0 s/\delta}}\sum_{b,z\in \mathcal{V}_0}\sum_{t=0}^{k-1} (x^\Lambda_b)^s \frac{A_1 \cdots A_t(z,b)}{{(\rho_1 \cdots \rho_t)^{s/\delta} }}  \sum_{j=0}^{p-1} c_{s(\gamma),z;j} \rho^{j(1-s/\delta)}  \\
& \qquad + (1-\rho^{p(1-s/\delta)})  {R}_{\mathcal{N}}(s(\gamma),s),
\end{split}
\end{equation}
where for any $a \in \mathcal{V}_0$, we set

\begin{equation}
\label{eq-similar-Num}
\begin{split}
R_{\mathcal{N}}(a,s) 
&= \rho^{-q_0(s/\delta)} \sum_{b,z\in \mathcal{V}_0}\sum_{t=0}^{k-1} \frac{A_1 \cdots A_t(z,b) }{(\rho_1 \cdots \rho_t)^{s/\delta}} (x^\Lambda_b)^s \sum_{i=p+1}^m\sum_{q\in \N} P^{a,z}_i(q) \frac{\alpha_i^q}{\rho^{qs/\delta}}.
\end{split} 
\end{equation}
Note that, as before, for any fixed $a \in \mathcal{V}_0$ and any $s > \delta$, ${R}_{\mathcal{N}}(a,s)$ is finite.

Now, for $s > \delta$ and $a \in \mathcal{V}_0$, let 
\begin{align*}
Y_{a; 0}(s) & =\sum_{t=0}^{k-1} \sum_{z,b\in \mathcal{V}_0} \frac{A_1 \cdots A_t(z,b)}{(\rho_1 \cdots \rho_t)^{s/\delta}}  (x^\Lambda_b)^s \sum_{j=0}^{p-1} c_{a,z;j} \rho^{j(1-s/\delta)}.
\end{align*}
Observe that $Y_{a; 0}(s)$ is positive for every $a \in \mathcal V_0$, since the fact that $\Lambda$ is source-free implies that $\sum_{b \in \mathcal V_0}A_1 \cdots A_t(z, b)$ is nonzero for each $z \in \mathcal V_0,  \ 0\leq t \leq k-1 $, and Equation \eqref{eq:Aj-c-coeffs} implies that $c_{a,z;j} = A^{(j)}(a,z) \in \R_{\geq 0}$ is nonzero for at least one $z$.  If we also define
\begin{align*}
  Y_s = \sum_{a\in \mathcal{V}_0} Y_{a;0}(s) = 
       \sum_{t=0}^{k-1} \sum_{a,z,b\in \mathcal{V}_0}  \frac{A_1 \cdots A_t(z,b)}{(\rho_1 \cdots \rho_t)^{s/\delta}} (x^\Lambda_b)^s \sum_{j=0}^{p-1} c_{a,z;j} \rho^{j(1-s/\delta)},
\end{align*}
then we have $\zeta_\delta(s) =  Y_s(1-\rho^{p(1-s/\delta)})^{-1} + R_\zeta(s)$. Similarly  we have from \eqref{eq:num_1}
\[
\mathcal{N}_\gamma(s) =\frac{ Y_{s(\gamma); 0} (s)}{\rho^{q_0 s/\delta}(1-\rho^{p(1-s/\delta)})} + R_\mathcal{N}(s(\gamma); s)
\] 
 It follows that 
\begin{align*}
\mu_\delta([\gamma]) &= \lim_{s\searrow \delta} \frac{\mathcal{N}_\gamma(s)}{\zeta_\delta(s)} = \lim_{s \searrow \delta} \frac{(1-\rho^{p(1-s/\delta)}) \mathcal{N}_\gamma(s)}{(1-\rho^{p(1-s/\delta)}) \zeta_\delta(s)} \\
& = \lim_{s\searrow \delta}  \frac{\rho^{-q_0s/\delta}d_{s(\gamma);0}(s) + (1-\rho^{p(1-s/\delta)}){R}_{\mathcal{N}}(s(\gamma),s)}{Y_s + (1-\rho^{p(1-s/\delta)}){R}_{\zeta}(s)} .
\end{align*}

The next step in calculating $\mu_\delta([\gamma])$ is to compute 
\[ \lim_{s \searrow \delta} (1-\rho^{p(1-s/\delta)}){R}_{\zeta}(s) \text{ and } \lim_{s \searrow \delta} (1-\rho^{p(1-s/\delta)}){R}_{\mathcal{N}}(s(\gamma),s).\] 
For each $i$, the infinite series 
\begin{equation}
  \sum_{q \in \N} P_i^{a,z}(q) \left( \frac{\alpha_i}{\rho^{s/\delta}} \right)^q\label{eq:R-inf-series}
\end{equation}
used in the definition of 
$R_{\mathcal{N}}(a, s)$ and $R_\zeta(s)$
satisfies 
\begin{equation}
\label{eq:R-upper-bd}
\begin{split}
 \left| \sum_{q \in \N} P_i^{a,z}(q) \left( \frac{\alpha_i}{\rho^{s/\delta}} \right)^q \right| &\leq \sum_{q \in \N}|  P_i^{a,z}(q) | \left| \frac{\alpha_i}{\rho^{s/\delta}}\right|^q \\
   & < \sum_{q\in \N} | P_i^{a,z}(q) | \left( \frac{|\alpha_i|}{\rho}\right)^q 
 \end{split}
\end{equation}
whenever $s> \delta$, since $\rho > 1$.  The eigenvalue $\alpha_i$ also satisfies $|\alpha_i| < \rho$ by construction.  Since $|\alpha_i|/\rho < 1$, it now follows that the series \eqref{eq:R-inf-series} converges by the Root Test.  Moreover, the fact that the final series in \eqref{eq:R-upper-bd} converges independently of $s$ implies that both $\lim_{s\searrow \delta} R_{\mathcal{N}}(a, s)$ and $\lim_{s\searrow \delta}R_\zeta(s)$ are finite.
Since $\lim_{s\searrow \delta} (1- \rho^{p(1-s/\delta)}) = 0$, we have 
\[ \lim_{s\searrow \delta} (1-\rho^{p(1-s/\delta)}) R_\mathcal{N}(a, s) = 0 = \lim_{s \searrow \delta} (1-\rho^{p(1-s/\delta)}) R_\zeta(s).\]
All of the terms in the formulas for $Y_s$ and $Y_{a;0}(s)$ are  continuous in the limit $s \searrow \delta$; thus,
\begin{equation}
\begin{split}
 \mu_\delta([\gamma]) & = \frac{ \rho^{-q_0} Y_{s(\gamma); 0}(\delta)}{ Y_\delta}
\label{eq:mu-first-formula}
\end{split}
\end{equation}
whenever $\gamma \in F\mathcal{B}_\Lambda$ has $|\gamma| = q_0 k$.
Since the sums defining $Y_{s(\gamma);0}(\delta)$ and $Y_\delta$ are finite, $\mu_\delta([\gamma])$ is finite in this case.

If $|\gamma| = q_0 k + t_0$ for some $t_0 \not= 0$, we can consider separately the paths in $F_\gamma \mathcal{B}_\Lambda$ extending $\gamma$ which have  length less than $(q_0 + 1)k$, and those which are longer.
This perspective gives 
\begin{align*}
  \mathcal{N}_\gamma(s) & = \sum_{a \in \mathcal{V}_0} \sum_{r=0}^{k-t_0-1} \frac{A_{t_0 + 1} \cdots A_{t_0 + r}(s(\gamma), a)}{ \rho^{q_0 s/\delta} (\rho_1 \cdots \rho_{t_0 +r})^{s/\delta}} (x_a^\Lambda)^s \\
    & \qquad + \sum_{a, z, b \in \mathcal{V}_0} \sum_{t=0}^{k-1} {A_{t_0+1} \cdots A_k(s(\gamma), a) } (x^\Lambda_b)^s \frac{A_1 \cdots A_t(z,b)}{ (\rho_1 \cdots \rho_t)^{s/\delta}} \sum_{q\in \N} A^q(a,z) \rho^{-(q_0 + 1 + q) s/\delta}
\end{align*}
The first sum, being finite and continuous in the limit as $s \searrow \delta$, will vanish when we multiply $\mathcal{N}_\gamma(s)$ by $(1-\rho^{p(1-s/\delta)})$.  For the second term, the same manipulations that we performed above on the sum $\sum_{q\in \N} A^q(a,z) \rho^{-q(s/\delta)}$ tell us that if we let (for $v\in \mathcal{V}_0$)
\[  \widetilde{Y_{v;t_0}}(s) :=  \sum_{a,z,b\in \mathcal{V}_0} A_{t_0 +1} \cdots A_k(v, a) \sum_{t=0}^{k-1} \frac{A_1 \cdots A_t(z,b)}{(\rho_1 \cdots \rho_t)^{s/\delta}} (x_b^\Lambda)^s\sum_{j=0}^{p-1} c_{a,z; j} \rho^{j(1-s/\delta)} ,\]
we have
 \begin{align*}
\mu_\delta([\gamma]) &= \lim_{s\searrow \delta} \frac{\mathcal{N}_\gamma(s)}{\zeta_\delta(s)} = \lim_{s \searrow \delta} \frac{(1-\rho^{p(1-s/\delta)}) \mathcal{N}_\gamma(s)}{(1-\rho^{p(1-s/\delta)}) \zeta_\delta(s)} \\
& = \frac{\widetilde{Y_{s(\gamma);t_0}}(\delta)}{\rho^{q_0+1} {Y_\delta}} ,
\end{align*}
where for any $v \in \mathcal{V}_0$, we have 
\begin{align*}
\widetilde{Y_{v; t_0}}(\delta) &= \sum_{a,b,z \in \mathcal{V}_0} \sum_{t=0}^{k-1} \frac{A_1 \cdots A_t(z,b) }{\rho_1 \cdots \rho_t} (x_b^\Lambda)^\delta A_{t_0+1} \cdots A_k(v, a) \sum_{j=0}^{p-1} c_{a,z; j} \\ 
&= \sum_{a \in \mathcal{V}_0} A_{t_0 +1} \cdots A_k(v,a) Y_{a; 0}(\delta).
\end{align*}
Observe that
 $\mu_\delta([\gamma])$ only depends on the vertex $s(\gamma)$ and on $|\gamma| = q_0 k + t_0.$  Moreover, $\mu_\delta([\gamma])$ is a quotient of finite sums and is hence finite.  The fact that  the Dixmier trace determines a measure $\mu_\delta$ now follows from Proposition \ref{pr:dixmier-trace-measure}.
\end{proof}

We now proceed to compare the measures $\mu_\delta$ on $X_{\mathcal{B}_\Lambda}$ of Theorem \ref{thm:dixmier-measure} with the unique scale-invariant Borel measure $M$ on $\Lambda^\infty\cong X_{\mathcal{B}_\Lambda}$ identified in Proposition 8.1 of \cite{aHLRS}, which was described in Equation \eqref{eq:M-measure} above.

\begin{cor}
\label{cor:dixmier-aHLRS}
Let $\Lambda$ be a finite, strongly connected $k$-graph with adjacency matrices $A_i$ {such that  $A = A_1 \cdots A_k$ is  irreducible. }For any $\delta \in (0,1)$ such that 
Equation \eqref{eq:weight-diam} holds on the associated ultrametric Cantor set $(X_{\mathcal{B}_\Lambda}, d_{\delta}),$
 {the  measure $\mu_\delta$ induced by the Dixmier trace} agrees with the measure $M$ introduced in Proposition~8.1 of \cite{aHLRS}:  for any  $\lambda \in F\mathcal{B}_\Lambda$,
\[ 
\mu_\delta([\lambda]) = (\rho_1 \cdots \rho_t)^{-(q+1)} (\rho_{t+1} \cdots \rho_k)^{-q} x^\Lambda_{s(\lambda)} = M([\lambda]).
\]
  In particular, $\mu_\delta$ is a probability measure which is independent of $\delta$.
\end{cor}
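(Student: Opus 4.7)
The plan is to plug the explicit formulas for $\mu_\delta([\gamma])$ obtained at the end of the proof of Theorem \ref{thm:dixmier-measure} into the expression for $M([\lambda])$ in Equation \eqref{eq:M-measure}, and show that the $a$-dependence of $Y_{a;0}(\delta)$ factors through the Perron--Frobenius eigenvector $x^\Lambda$. Recall that we have two formulas depending on whether $|\gamma|$ is a multiple of $k$: for $|\gamma|=q_0 k$,
\[ \mu_\delta([\gamma]) \;=\; \frac{\rho^{-q_0}\,Y_{s(\gamma);0}(\delta)}{Y_\delta}, \]
and for $|\gamma|=q_0 k+t_0$ with $t_0 > 0$,
\[ \mu_\delta([\gamma])\;=\;\frac{\widetilde{Y_{s(\gamma);t_0}}(\delta)}{\rho^{q_0+1}\,Y_\delta}, \qquad \widetilde{Y_{v;t_0}}(\delta)=\sum_{a\in\mathcal V_0} A_{t_0+1}\cdots A_k(v,a)\, Y_{a;0}(\delta).\]
Moreover, from Equation \eqref{eq:Aj-c-coeffs} we have $c_{a,z;j}=A^{(j)}(a,z)$, where $A^{(j)}$ is the partial Perron limit of $A^{mp+j}/\rho^{mp+j}$ introduced in Equation \eqref{eq:A-j}.

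The key step is the identity
\[ \sum_{j=0}^{p-1} A^{(j)}(a,z) \;=\; p\, x^\Lambda_a\, y^\Lambda_z, \]
where $y^\Lambda$ is the positive left Perron--Frobenius eigenvector of $A$ normalized so that $y^\Lambda\cdot x^\Lambda = 1$. This is the standard Perron--Frobenius statement for irreducible matrices with period $p$: the peripheral spectrum of $A/\rho$ consists of simple eigenvalues $\omega$ with $\omega^p=1$, so $A^{(j)}=\sum_\omega \omega^j P_\omega$ with $P_\omega$ the corresponding rank-one spectral projection, and summing over $j$ kills every contribution except $\omega=1$, which gives $p\cdot P_1 = p\, x^\Lambda(y^\Lambda)^T$. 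I would derive this either from the Jordan decomposition already invoked in the proof of Theorem \ref{thm:abscissa-conv} or cite it from \cite{horn-johnson-matrix-analysis} or \cite{BP94}.

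Granting this, substituting into the definition of $Y_{a;0}(\delta)$ at $s=\delta$ (so the factors $\rho^{j(1-s/\delta)}$ and $(\rho_1\cdots\rho_t)^{s/\delta}$ simplify) gives
\[ Y_{a;0}(\delta) \;=\; x^\Lambda_a \cdot C_\delta, \qquad C_\delta \;:=\; p\sum_{t=0}^{k-1}\sum_{b,z\in\mathcal V_0}\frac{A_1\cdots A_t(z,b)}{\rho_1\cdots\rho_t}(x^\Lambda_b)^\delta\, y^\Lambda_z, \]
since $C_\delta$ is independent of $a$. Summing over $a$ and using $\sum_a x^\Lambda_a = 1$ yields $Y_\delta = C_\delta$, so $Y_{a;0}(\delta)/Y_\delta = x^\Lambda_a$. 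In the case $t_0=0$ this immediately gives $\mu_\delta([\gamma]) = \rho^{-q_0}\,x^\Lambda_{s(\gamma)}=M([\gamma])$. In the case $t_0>0$, I apply $A_i x^\Lambda = \rho_i x^\Lambda$ to compute
\[ \widetilde{Y_{s(\gamma);t_0}}(\delta) \;=\; C_\delta \sum_{a} A_{t_0+1}\cdots A_k(s(\gamma),a)\,x^\Lambda_a \;=\; C_\delta\,\rho_{t_0+1}\cdots\rho_k\, x^\Lambda_{s(\gamma)}, \]
and dividing by $\rho^{q_0+1}Y_\delta = \rho^{q_0+1}C_\delta$ together with $\rho = \rho_1\cdots\rho_k$ yields exactly $(\rho_1\cdots\rho_{t_0})^{-(q_0+1)}(\rho_{t_0+1}\cdots\rho_k)^{-q_0} x^\Lambda_{s(\gamma)}=M([\gamma])$.

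The main obstacle is really just the Perron--Frobenius identity for the Cesàro sum of $A^{(j)}$ in the periodic irreducible case; everything else is algebraic manipulation that is forced by the formulas already in Theorem \ref{thm:dixmier-measure}. Finally, that $\mu_\delta$ is a probability measure and is independent of $\delta$ follows immediately, since $M$ is, and the equality holds on the generating family of cylinder sets.
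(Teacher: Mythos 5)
Your proposal is correct, and it reaches the conclusion by a genuinely different route from the paper. The paper never computes $\sum_{j=0}^{p-1}A^{(j)}$: instead it first checks $\sum_v \mu_\delta([v])=1$ from $Y_\delta=\sum_v Y_{v;0}(\delta)$, then derives the scaling relation $\mu_\delta([v])=\tfrac{1}{\rho}\sum_{z}A(v,z)\,\mu_\delta([z])$ directly from the defining limit (by splitting each path of length $\geq k$ as $\gamma_0\gamma'$ with $|\gamma_0|=k$ and using $w_\delta(\gamma)=\rho^{-1/\delta}w_\delta(\gamma')$), so that $(\mu_\delta([v]))_v$ is a normalized positive $\rho$-eigenvector of $A$ and hence equals $x^\Lambda$ by irreducibility; it then verifies the formula only on the square cylinder sets $|\gamma|=q_0k$ and extends to all of $\Lambda^\infty$ because the squares generate the Borel $\sigma$-algebra (Lemma~4.1 of \cite{FGKP}). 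Your argument instead evaluates $Y_{a;0}(\delta)$ in closed form via the spectral-projection identity $\sum_{j=0}^{p-1}A^{(j)}=p\,x^\Lambda (y^\Lambda)^T$, which is valid here precisely because the Corollary assumes $A$ irreducible (so the peripheral eigenvalues of $A/\rho$ are simple and the $\rho$-projection is rank one); this yields $Y_{a;0}(\delta)=C_\delta\,x^\Lambda_a$, $Y_\delta=C_\delta$, and then both the $t_0=0$ and $t_0>0$ formulas of Theorem~\ref{thm:dixmier-measure} reduce algebraically to $M([\gamma])$. What your route buys is a direct verification on \emph{all} cylinder sets (not just square ones), with no appeal to the generating-family argument, at the cost of invoking the heavier Perron--Frobenius fact about the periodic peripheral spectrum; the paper's route is softer, needing only uniqueness of the normalized positive eigenvector, and avoids any explicit evaluation of the limits $A^{(j)}$. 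Both uses of irreducibility are essential in their respective places, and your final remarks (probability, $\delta$-independence) follow as you say.
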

\begin{proof}
To see that $\mu_\delta$ is a probability measure, we compute 
\begin{align*}
\mu_\delta(X_{\mathcal{B}_\Lambda}) &= \sum_{v \in \mathcal{V}_0} \mu_\delta([v]) = \sum_{v\in \mathcal{V}_0} \frac{Y_{v;0}(\delta)}{Y_\delta} =1,
\end{align*}
since $Y_s = \sum_{v \in \mathcal{V}_0} Y_{v;0}(s)$ for all $s$.  Recall that $Y_{v;0}(\delta) > 0$ for all $v \in \mathcal V_0$, and hence $Y_{\delta}$ is also finite and nonzero.

Moreover, for any path $\gamma \in F_v\mathcal{B}_\Lambda$ with $|\gamma| \geq k$,  write $\gamma = \gamma_0 \gamma'$ with $|\gamma_0| = k$.  Since $r(\gamma') \in \mathcal{V}_k = \mathcal{V}_0$, we can identify $\gamma'$ with a path in $F\mathcal{B}_\Lambda$. Then Proposition \ref{pr:delta-weight} tells us that 
\[w_\delta(\gamma) = \rho^{-1/\delta} w_\delta(\gamma').\]
Consequently, 
\begin{align*}\mu_\delta([v]) &  = \lim_{s \searrow \delta} \frac{\sum_{\gamma \in F_v \mathcal{B}_\Lambda}w_\delta(\gamma)^s}{\zeta_\delta(s)} = \lim_{s \searrow \delta} \left(  \frac{\sum_{r(\gamma) = v, |\gamma| < k} w_\delta(\gamma)^s}{\zeta_\delta(s)} + \frac{\sum_{r(\gamma) = v, |\gamma| \geq k } w_\delta(\gamma)^s}{\zeta_\delta (s)} \right)  \\
&= \lim_{s \searrow \delta} \left( \frac{\sum_{r(\gamma) = v, |\gamma| < k} w_\delta(\gamma)^s}{\zeta_\delta(s)} + \frac{\sum_{n=1}^\infty \sum_{t=0}^{k-1} \sum_{r(\gamma) = v, \, |\gamma| = nk + t} w_\delta(\gamma)^s}{\zeta_\delta(s)} \right) \\
&= \lim_{s \searrow \delta} \left( \frac{\sum_{r(\gamma) = v, |\gamma| < k} w_\delta(\gamma)^s}{\zeta_\delta(s)} + \frac{\rho^{-s/\delta} \sum_{n=0}^\infty \sum_{t=0}^{k-1} \sum_{z \in \Lambda^0}\sum_{r(\gamma') = z, \, |\gamma'| = nk + t}A(v, z) w_\delta(\gamma')^s}{\zeta_\delta(s)} \right) \\
&= \frac{1}{\rho} \sum_{z \in \Lambda^0} A(v, z) \mu_\delta([z]).
\end{align*}
The penultimate equality holds because of the formula \eqref{eq:w-delta}
for the weight $w_\delta$; the last equality holds because $\lim_{s \to \delta} \zeta_w(s) = \infty$, and hence the first sum (having a finite numerator) tends to zero as $s$ tends to $\delta$.

Thus, $(\mu_\delta([v]))_v$ is a positive eigenvector for $A$ with $\ell^1$-norm 1 and eigenvalue $\rho$, and hence must agree with $x^\Lambda$ by the irreducibility of $A$.

Recall from Equation \eqref{eq:mu-first-formula} that if $|\gamma| = q_0 k$ (equivalently, if we think of $\gamma \in \Lambda$, we have $d(\gamma) = (q_0, \ldots, q_0)$) we have 
\[ \mu_\delta([\gamma]) = \frac{Y_{s(\gamma); 0}(\delta)}{Y_\delta \rho^{q_0 }} . \]
Thus, for such $\gamma$, we have 
\[ \mu_\delta([\gamma]) = \mu_\delta([s(\gamma)]) \rho^{-q_0} = \mu_\delta([s(\gamma)]) (\rho_1 \cdots \rho_k)^{-q_0} = x^\Lambda_{s(v)} (\rho_1 \cdots \rho_k)^{-q_0}.\]
Comparing this formula with Equation \eqref{eq:M-measure} tells us that whenever $|\gamma | = q_0 k$,
\[\mu_\delta([\gamma]) = M([\gamma]).\]
Since $\mu_\delta$ agrees with $M$ on the square cylinder sets $[\lambda]$ with $d(\lambda) = (q_0, \ldots, q_0)$, and we know from the proof of Lemma 4.1 of \cite{FGKP} that
these sets generate the Borel $\sigma$-algebra of $\Lambda^\infty$, $\mu_\delta$ must agree with $M$ on all of $\Lambda^\infty.$ 
\end{proof}

\subsection{Hausdorff measure}
\label{sec:hausdorff}
In this section, we show that the same hypotheses we needed to guarantee the existence of the Dixmier trace also imply that the measure induced by the Dixmier trace agrees with the Hausdorff measure of the ultrametric Cantor set $(\Lambda^\infty, d_\delta)$.  A similar conclusion was also obtained, under different hypotheses, in Theorem 3.8 of \cite{FGKP}.
\begin{defn}\cite[Definition 16]{rogers}
\label{def:hausdorff-measure}
Let $(X, d)$ be a metric space and fix $s \in \R_{\geq 0}$.
The \emph{Hausdorff measure} of dimension $s$ of a  compact subset $Z$ of $X$ is 
\[ H^s(Z) = \lim_{\epsilon \to 0}\,  \inf \left\{ \sum_{U_i \in F} (\text{diam } U_i)^s: |F| < \infty, \ \cup_i U_i = Z, \ \text{diam } U_i < \epsilon \text{ for all } i\right\}.\]
It is standard  to show that $H^s(Z)$ is a decreasing function of $s$, and that there is a unique $s \in \R$ such that $H^t(X) = \infty$ for all $t > s$ and that $H^t(X) = 0$ for all $t < s$.  This value of $s$ is called the \emph{Hausdorff dimension} of $X$.
\end{defn}

\begin{thm}
\label{thm:Hausd-meas-nontriv}Let $\Lambda$ be a finite, strongly connected $k$-graph  and fix $\delta \in (0,1)$.     If Equation \eqref{eq:weight-diam} holds for the ultrametric Cantor set $(X_{\mathcal B_\Lambda}, d_{\delta})$, we have $0 < H^\delta(X_{\mathcal B_\Lambda}) < \infty$.
\end{thm}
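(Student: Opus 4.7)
The plan is to bracket $H^\delta(X_{\mathcal{B}_\Lambda})$ between two positive constants by exploiting the very tight relation between the weight $w_\delta$, the diameter of cylinder sets, and the measure $M = \mu_\delta$ of Corollary~\ref{cor:dixmier-aHLRS}. Comparing Equation~\eqref{eq:w-delta} with Equation~\eqref{eq:M-measure} yields, for every $\eta\in F\mathcal{B}_\Lambda$,
\[
w_\delta(\eta)^\delta \;=\; M([\eta])\cdot\bigl(x^\Lambda_{s(\eta)}\bigr)^{\delta-1}.
\]
Since $x^\Lambda$ has finitely many strictly positive entries, there exist constants $0<x_{\min}\le x_{\max}<\infty$ with $x_{\min}\le x^\Lambda_v\le x_{\max}$ for all $v\in\Lambda^0$; because $\delta-1<0$ this gives
\[
x_{\max}^{\delta-1}\,M([\eta])\;\le\; w_\delta(\eta)^\delta\;\le\; x_{\min}^{\delta-1}\,M([\eta]).
\]

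For the upper bound, first I would invoke Hypothesis~\ref{hypoth} to identify $\text{diam}[\gamma]=w_\delta(\gamma)$, then for each $n\in\N$ use the disjoint cover $X_{\mathcal{B}_\Lambda}=\bigsqcup_{\gamma\in F^n\mathcal{B}_\Lambda}[\gamma]$. Summing the bound above over this partition and using that $M$ is a probability measure gives
\[
\sum_{\gamma\in F^n\mathcal{B}_\Lambda}(\text{diam}[\gamma])^\delta\;\le\;x_{\min}^{\delta-1}\sum_{\gamma\in F^n\mathcal{B}_\Lambda}M([\gamma])\;=\;x_{\min}^{\delta-1}.
\]
Because the second condition in Definition~\ref{def:weight} forces $\max_{\gamma\in F^n\mathcal{B}_\Lambda}\text{diam}[\gamma]\to 0$ as $n\to\infty$, these partitions give admissible covers for arbitrarily small $\epsilon$, whence $H^\delta(X_{\mathcal{B}_\Lambda})\le x_{\min}^{\delta-1}<\infty$.

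For the lower bound, the key geometric input is an ultrametric lemma: there is a constant $C>0$ such that every non-empty $U\subseteq X_{\mathcal{B}_\Lambda}$ is contained in a cylinder set $[\gamma_U]$ with $\text{diam}[\gamma_U]\le C\cdot\text{diam}(U)$. To prove this, I would fix $x\in U$ and choose the largest $n$ for which $w_\delta(x_1\cdots x_n)>\text{diam}(U)$; the strong triangle inequality together with the strict monotonicity of $w_\delta$ along sub-paths (Definition~\ref{def:weight}) forces $U\subseteq[x_1\cdots x_n]$, while the explicit formula for $w_\delta$ shows
\[
\frac{w_\delta(x_1\cdots x_n)}{w_\delta(x_1\cdots x_{n+1})}\;\le\;C\;:=\;\bigl(\max_i \rho_i\bigr)^{1/\delta}\cdot\frac{x_{\max}}{x_{\min}},
\]
so $\text{diam}[\gamma_U]=w_\delta(x_1\cdots x_n)\le C\cdot w_\delta(x_1\cdots x_{n+1})\le C\cdot\text{diam}(U)$. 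Given any finite cover $\{U_i\}$ of $X_{\mathcal{B}_\Lambda}$ with $\text{diam}(U_i)<\epsilon$, replace each $U_i$ by the cylinder set $[\gamma_i]$ supplied by the lemma; this yields a cover of $X_{\mathcal{B}_\Lambda}$ by cylinder sets, and subadditivity of $M$ gives $\sum_i M([\gamma_i])\ge M(X_{\mathcal{B}_\Lambda})=1$. Combining,
\[
\sum_i\text{diam}(U_i)^\delta \;\ge\; C^{-\delta}\sum_i w_\delta(\gamma_i)^\delta \;\ge\; C^{-\delta}x_{\max}^{\delta-1}\sum_i M([\gamma_i]) \;\ge\; C^{-\delta}x_{\max}^{\delta-1}>0,
\]
uniformly in $\epsilon$, so $H^\delta(X_{\mathcal{B}_\Lambda})\ge C^{-\delta}x_{\max}^{\delta-1}>0$.

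The main obstacle I anticipate is the ultrametric covering lemma: verifying both that $U\subseteq[x_1\cdots x_n]$ for the chosen $n$ (which uses the strict monotonicity of $w_\delta$ under path extension rather than any finer comparison of weights of distinct paths of equal length) and that the successive-ratios bound $C$ is finite (which ultimately reduces to the ratio $x_{\max}/x_{\min}$ of Perron–Frobenius entries and the bounded spectral radii of the $A_i$). Everything else is bookkeeping from the identity $w_\delta(\eta)^\delta = M([\eta])(x^\Lambda_{s(\eta)})^{\delta-1}$ together with the fact, already established in Corollary~\ref{cor:dixmier-aHLRS}, that $M$ is a probability measure.
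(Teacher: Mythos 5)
Your proof is correct, but it takes a genuinely different route from the paper's. You run a mass-distribution (Frostman-type) argument: the exact identity $w_\delta(\eta)^\delta = M([\eta])\,(x^\Lambda_{s(\eta)})^{\delta-1}$, obtained by comparing \eqref{eq:w-delta} with \eqref{eq:M-measure}, shows that $(\mathrm{diam}[\eta])^\delta$ is comparable to $M([\eta])$ with uniform constants $x_{\max}^{\delta-1}$ and $x_{\min}^{\delta-1}$, after which finiteness follows from summing over the level-$n$ partitions and positivity from your ultrametric covering lemma plus subadditivity of $M$. Both halves of your covering lemma check out: the containment $U\subseteq[x_1\cdots x_n]$ uses only that $w_\delta$ is strictly decreasing along sub-paths (if $|x\wedge y|=m<n$ then $d(x,y)=w_\delta(x_1\cdots x_m)>w_\delta(x_1\cdots x_n)>\mathrm{diam}(U)$, a contradiction), and the one-step ratio bound gives a finite $C$; in fact, choosing instead the smallest $n$ with $w_\delta(x_1\cdots x_n)\le\mathrm{diam}(U)$ gives $C=1$, which is the standard ultrametric fact that every set has the same diameter as the smallest ball containing it, and is exactly the reduction the paper makes at the start of its proof. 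The paper then diverges completely: for positivity it argues by contradiction, builds a sequence of refining cylinder covers, symmetrizes them into ``self-similar'' covers, and bounds the resulting sums below using the Jordan-form expansion of $A^q$ and the positivity of the limit coefficients $c_{z,v;q}$ (which requires a separate transpose-eigenvector argument); for finiteness it counts paths via $A^q(v,w)=O(\rho^q)$. Your argument is substantially shorter and yields explicit two-sided bounds $C^{-\delta}x_{\max}^{\delta-1}\le H^\delta(X_{\mathcal B_\Lambda})\le x_{\min}^{\delta-1}$, at the cost of importing the measure $M$; the paper's argument is self-contained on the combinatorial side. One small citation point: you should invoke $M$ as the Borel probability measure of Proposition 8.1 of \cite{aHLRS} (Equation \eqref{eq:kgraph_Measure}), not via Corollary \ref{cor:dixmier-aHLRS} --- that corollary assumes $A_1\cdots A_k$ irreducible, a hypothesis absent from the theorem, and you never actually need the identification $\mu_\delta=M$, only that $M$ itself is a probability measure with the stated values on cylinders. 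Also restrict to $\epsilon<\min_v x^\Lambda_v$ and discard singleton elements of the cover (their closures add nothing and the cylinders over the remaining sets still cover the compact perfect space $X_{\mathcal B_\Lambda}$) so that the ``largest $n$'' in your lemma always exists.
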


\begin{proof}
 First, consider a cover $\{ U_i\}_i$ of $X_{\mathcal B_\Lambda}$  and the associated sum
 \[
  \sum_{i \in F} (\text{diam } U_i)^\delta.
 \]
 Given $U_i$ in this covering, let $x = x_1 x_2 \cdots \in U_i$, and define $B_{x,i} = B(x,\tfrac 1 2 \text{diam } U_i)$ (the open ball of center $x$ and diameter $\text{diam }U_i$). {Invoking Equation \eqref{eq:weight-diam} and the definition of the weight $w_\delta$, there is a smallest $ n\in \N$ such that 
 \[w_\delta(x_1 \cdots x_n) \leq \text{diam}\, U_i;\]
 consequently,   $B_{x, i} = [x_1 \cdots x_n]$.} 
 Moreover, the ultrametric property implies that $B_{x, i} = U_i$ for all $x \in U_i$.
 Therefore, without loss of generality, the  infimum defining $H^\delta$ can be taken over coverings of the form $\{[\gamma_i]\}_{i \in F}$.

 Assume, for a contradiction, that $H^\delta(X_{\mathcal B_\Lambda}) = 0$. This means that for all $n \in \N$, there is $\epsilon_n > 0$ such that
 \[
  \inf_{|F|<\infty} \left\{ \sum_{i \in F} (\text{diam } [\gamma_i])^s: \cup_i [\gamma_i] = X_{\mathcal B_\Lambda}, \ \text{diam } [\gamma_i] < \epsilon_n \text{ for all } i\right\} < \frac 1 {2n}.
 \]
 By definition of the weight (and thus, by Equation \eqref{eq:weight-diam}, of the diameter of the cylinder sets $[\gamma_i])$, it follows that for all $n \in \N$ there is $m_n > 0$ such that
 \[
  \inf_{|F|<\infty} \left\{ \sum_{i \in F} (\text{diam } [\gamma_i])^s: \cup_i [\gamma_i] = X_{\mathcal B_\Lambda}, \ |\gamma_i| > m_n \text{ for all } i\right\} < \frac 1 {2n}.
 \]
 (note that $m_n$ tends to infinity as $n$ tends to infinity.)
 By definition of the infimum, for all $n$, there is  a covering $\{\gamma_i^{(n)}\}$ with $|\gamma_i| >  {m_n}$  such that
 \[
  \sum_{i \in F} (\text{diam } [\gamma^{(n)}_i])^\delta < \frac 1 n.
 \]
 Write $M_n$ for the length of the longest of the paths $\gamma_i^{(n)}$.
 For a given $n$, there is always a $k>n$ such that $m_k > M_n$. It follows that the shortest path among the $\{\gamma_i^{(k)}\}_i$ is longer that the longest path among the $\{\gamma_i^{(n)}\}_i$.
 In other words, the cover $\mathcal C_k := \{\gamma_i^{(k)}\}_i$ is a refinement of the cover $\mathcal C_n := \{\gamma_i^{(n)}\}_i$.
 In conclusion, up to extracting a subsequence, there is a sequence of refining coverings of $X$ whose diameter tends to zero, such that the elements of each covering are of the form $[\gamma]$ for $\gamma$ a finite path, and such that
 \[
  \lim_{n \rightarrow \infty} \sum_{[\gamma] \in \mathcal C_n} (\text{diam } [\gamma])^\delta = 0.
 \]
  
Now, let $(\mathcal C_n)_{n\in \N}$ be a sequence of finite open coverings of $X_{\mathcal B_\Lambda}$ such that, for all $n \in \N$,
 \[
   \sum_{[\gamma] \in \mathcal C_n} (\text{diam } [\gamma])^\delta < \frac 1 n.
 \]
 
 Each $\mathcal C_n$ is made of clopen sets $\mathcal C_n = \{[\gamma_i^{(n)}] \}_{i \in F_n}$ for $F_n$ a finite set, and we let $m_n$ and $M_n$ be respectively the length of the shortest and longest path among the $\{\gamma_i^{(n)}\}_{i \in F_n}$.
 
 For each $n$, we now consider the set of finite clopen covers  $\mathcal C$ of $X$ which satisfy the following: 
 \begin{enumerate}
 \item The clopen sets in $\mathcal C$ are of the form $[\eta_i^{(n)}]$ ($i \in F'_n$)
 \item The cover  satisfies $\sum_{i \in F'_n} (\text{diam } [\eta^{(n)}_i])^\delta < \frac 1 n$.

 \item For all $i \in F'_n$, $|\eta_i^{(n)}| \leq M_n$.
 \end{enumerate}
 There is at least one covering satisfying these conditions, {namely} $\mathcal C_n$, and there are only finitely many since there are finitely many paths whose length are bounded by $m_n$ and $M_n$.
 Among all these coverings, we let $\mathcal C'_n$ be {one such that $m'_n:=\min_i |\eta^{(n)}_i|$ is maximal.} 
 Since $\mathcal C_n$ satisfies conditions 1--3 above, we have $m'_n \geq m_n$.
 Therefore, when we do this construction for all $n$, $\mathcal C'_{n+1}$ is still a refinement of $\mathcal C'_n$.
 
 We further alter the coverings $\mathcal C'_n$ to show that they can be self-similar to some extent.
 For a given $n$, let $m'_n$ be the length of the shortest path appearing in $\mathcal C'_n$.
 For each $\lambda$ of length ${m'_n}$, we consider:
 \[
  \Sigma_{\lambda} = \sum_{i \in F'_n,\ [\eta_i^{(n)}] \subseteq [\lambda]} (\diam [\eta_i^{(n)}])^\delta
 \]
 Let us fix $v \in \mathcal V_n$ for the moment, and let $\lambda_v$ be a path for which the sum above is minimal, more precisely:
 \begin{equation}\label{eq:hausdorff-sum-minimal}
  \Sigma_{\lambda_v} = \min \{ \Sigma_\lambda \ : \ \lambda \in F^{m'_n} (\mathcal B), \ s(\lambda) = v \}.
 \end{equation}
 We name explicitly the partition of $[\lambda_v]$ induced by $\mathcal C'_n$: the elements of $\mathcal C'_n$ which are subsets of $[\lambda_v]$ are:
 \[
  [\lambda_v \eta'_1], \ldots, [\lambda_v \eta'_l].
 \]
 We define a new partition of $X$ as follows:
 for all finite paths $\lambda$ of length $m'_n$ and source $v$, we replace the elements of $\mathcal C'_n$ which are subsets of $[\lambda]$ by
 \[
  [\lambda \eta'_1], \ldots, [\lambda \eta'_l].
 \]
  By Equation~\eqref{eq:hausdorff-sum-minimal} and the formula given in Proposition \ref{pr:kgraph-bratteli-inf-path-spaces} for the weight $w_\delta$, for all $\lambda$ with $|\lambda| = m'_n$ and $s(\lambda) = v$, 
 \begin{equation}\label{eq:hausdorff-sum-reduced}
  \sum_{i=1}^k (\diam([{\lambda \eta'_i}])^\delta = \Sigma_{\lambda_v} \leq \Sigma_\lambda.
 \end{equation}
 We perform these substitutions for all $v$, and obtain a new partition of $X$, which we expectedly call $\mathcal C''_n$.
 By construction, we have
 \[
  \sum_{[\gamma] \in \mathcal C'_n} (\text{diam } [\gamma])^\delta \leq  \sum_{[\gamma] \in \mathcal C''_n} (\text{diam } [\gamma])^\delta .
 \]
 
 We can make this change for all $n$, and obtain a new sequence of partitions $(\mathcal C''_n)_n$ which satisfy properties 1--3 above, and in addition satisfies the following self-similarity condition:
 \begin{itemize}
  \item if $[\gamma] = [\lambda \eta]$ is an element of the partition $\mathcal C''_n$, with $|\lambda| = m'_n$, then for all paths $\lambda'$ of length $m'_n$ with $s(\lambda) = s(\lambda')$, then $[\lambda' \eta] \in \mathcal C''_n$.
 \end{itemize}
 In addition, since $\mathcal C'_n$ was the partition satisfying 1--3 for which $m'_n$ was maximal, we deduce that the shortest path $\lambda$ for which $[\lambda] \in \mathcal C''_n$ has length at most $m'_n$. By construction, its length cannot be lower than $m'_n$. Therefore,
 \begin{itemize}
 \item there is a path $\lambda$ of length $m'_n$ such that $[\lambda] \in \mathcal C''_n$.
 \end{itemize}
 
 We now have all the ingredients to conclude the proof.
 Fix $\lambda$ a path such that $[\lambda] \in \mathcal C''_n$ and $|\lambda| = m'_n$. Let $v$ be the source of $\lambda$. Then, by the self-similarity property above, for all $\lambda'$ such that $s(\lambda')=v$ {and $|\lambda'|=m'_n$}, we have $[\lambda'] \in \mathcal C''_n$.
 We can therefore compute a lower bound of $ \left\{ \sum_{[\gamma] \in \mathcal C''_n} (\text{diam}\, [\gamma])^\delta \right\}_{n \in \N}$:
 \[ \begin{split}
\sum_{[\gamma] \in \mathcal C''_n} (\text{diam}\, [\gamma])^\delta  & \geq \sum_{\lambda \in F^{m'_n}(\mathcal B), \ s(\lambda) = v} (\diam [\lambda])^\delta = \sum_{\lambda \in F^{m'_n}(\mathcal B), \ s(\lambda) = v} w_\delta (\lambda)^\delta \\
      & = \sum_{z \in \mathcal V_0} \rho^{-q} (\rho_1 \cdots \rho_t)^{-1} ( x^\Lambda_v)^\delta(A^q A_1 \cdots A_t)(z,v), \qquad \quad m'_n = qk+t.
 \end{split}\]
{
 Assume for the moment that $t = 0$; then $w_\delta(\lambda)^\delta = \rho^{-q} (x^\Lambda_v)^\delta$.  Recall from Equation~\eqref{eq-analog-oflemma-3-6-of-JS-all-cases} that 
 \[ A^q(z,v) = c_{z,v;q} \rho^q +   \sum_{i=p+1}^m P_i^{z,v}(q) \alpha_i ^q, \]
 where for each $i$, $|\alpha_i| < |\rho|$ and $P_i^{z,v}$ is a polynomial in $q$.  Thus, since $A$ and $x^\Lambda$ have non-negative entries, 
\[ 
  w_\delta(\lambda)^\delta A^q(z,v) = |w_\delta(\lambda)^\delta A^q(z,v)| = (x^\Lambda_v)^\delta \left|c_{z,v;q} + \sum_{i=p+1}^m P_i^{z,v}(q) \left( \frac{\alpha_i}{\rho}\right)^q \right| .
\]
{We claim that for all $q\in \N$, $ w_\delta(\lambda)^\delta A^q(z,v)$ is uniformly bounded away from zero.
To see this, we use the characterization of a strongly connected $k$-graph from~\cite[Section 3]{aHLRS}: the family of matrices $\{A_i\}_{1 \leq i \leq k}$ is an irreducible family of matrices. This characterization implies immediately that the family $\{A_i^T\}_{1 \leq i \leq k}$ is also an irreducible family.
Proposition 3.1 of \cite{aHLRS} then tells us that $\{ A_i^T\}_{1 \leq i \leq k}$ has a common positive {right} eigenvector $y \in (0, \infty)^{\Lambda^0}$. 
Note that $y$ is also a positive eigenvector for $A^T$ with eigenvalue $\rho$, since the spectral radius of a matrix equals that of its transpose.

Now, the same arguments used 
to establish in Equation \eqref{eq:Aj-positive} that for all $j \in \{ 0, \ldots, p-1\}$ and $a\in \mathcal V_0$, there exists $b\in \mathcal V_0$ such that $A^{(j)}(a,b) > 0$ will also show that for all such $j$ and $b$, there exists $a$ such that $A^{(j)}(a,b) = (A^{(j)})^T(b,a)$ is positive.}  In other words, for each fixed $q$ and $v$, $c_{z, v; q}$
must be nonzero for at least one $z \in \mathcal V_0$. 
 Since $P_i^{z,v}$ is a polynomial in $q$ and $c_{z,v;q}$ is periodic in $q$ (of period $p$), we see that \[ \lim_{q \to \infty} w_\delta(\lambda)^\delta A^q(z,v)\] is bounded away from zero, for all $z, v \in \mathcal V_0$.

If $1 \leq t \leq k-1$, the fact that the adjacency matrices $A_i$ commute implies that
\[ A^q A_1 \cdots A_t(z, v) = \sum_{a \in \mathcal V_0} A_1 \cdots A_t(z,a) A^q(a, v) = \sum_{a \in \mathcal{V}_0} A_1 \cdots A_t(z,a) \left(c_{a,v;q} \rho^q +   \sum_{i=p+1}^m P_i^{a,v}(q) \alpha_i ^q\right) \]
and hence (since  $w_\delta(\lambda) = \frac{x^\Lambda_{s(\lambda)}}{(\rho^q \rho_1 \cdots \rho_t)^{1/\delta}}$)
\[\sum_{z\in \mathcal V_0} w_\delta(\lambda)^\delta (A^q A_1 \cdots A_t)(z, v) = \frac{(x^\Lambda_v)^\delta}{(\rho_1 \cdots \rho_t)} \sum_{a, z \in \mathcal{V}_0} A_1 \cdots A_t(z,a) \left(c_{a,v;q}  +   \sum_{i=p+1}^m P_i^{a,v}(q)\left(\frac{\alpha_i}{\rho}\right)^q\right).\]
Since $\Lambda$ is strongly connected, by Lemma 2.1 of \cite{aHLRS}, for all $a \in \mathcal V_0$ and all $t$, $A_1 \cdots A_t(z, a)$ must be nonzero for at least one $z$.  This, combined with the fact that for each $q$ and $v$, there exists $a$ such that $\{c_{a, v; q}\}  > 0$, implies that the sum above is  bounded away from zero, uniformly in $q$ and $t$.  In other words, no matter what value of $m_n' = qk +t$ we have,
\[ \lim_{n\to \infty} \sum_{[\gamma] \in \mathcal C''_n} (\text{diam}\, [\gamma])^\delta > 0,\]
contradicting the hypothesis that $X_{\mathcal B_\Lambda}$ has zero Hausdorff measure.
 }

 To show that $H^s(X_{\mathcal B_\Lambda})$ is not infinite, simply consider for all $q \in \N$ the cover of $X_{\mathcal B_\Lambda}$ given by $\{[\gamma] \ : \ \gamma \in F^{qk}(\mathcal B) \}$.
 We have
 \[
  H^\delta (X_{\mathcal B_\Lambda}) \leq \lim_{q \rightarrow \infty} \sum_{\gamma \in F^{qk}(\mathcal B)} (\diam[\gamma])^\delta = \lim_{q \to \infty} \sum_{\gamma \in F^{qk}(\mathcal B)} w_\delta(\gamma)^\delta.
 \]
 Now, if $|\gamma| = qk$, then $w_\delta(\gamma)^\delta = O(\rho^{-q})$. In addition, the number of paths in $F^{qk}(\mathcal B)$ is equal to $\sum_ {v,w} A^q(v,w)$, which by Equation~\eqref{eq-analog-oflemma-3-6-of-JS-all-cases} is $O(1)$-dominated  by $ \rho^{q}$. Therefore $
  H^\delta(X_{\mathcal B_\Lambda})$ is finite.
\end{proof}

\begin{cor}
The Hausdorff dimension of $(X_{\mathcal B_\Lambda}, d_\delta)$ is $\delta$.
\label{cor:Hausdorff-dim}
\end{cor}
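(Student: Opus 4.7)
The plan is that this corollary should be immediate from Theorem \ref{thm:Hausd-meas-nontriv} together with the classical scaling property of Hausdorff measure. Theorem \ref{thm:Hausd-meas-nontriv} has already established that $0 < H^\delta(X_{\mathcal B_\Lambda}) < \infty$, and it is a standard fact that any exponent at which the Hausdorff measure is finite and nonzero must coincide with the Hausdorff dimension. So the work consists only of reading off the conclusion from Definition \ref{def:hausdorff-measure}.

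Concretely, I would verify directly that $H^t(X_{\mathcal B_\Lambda}) = 0$ for every $t > \delta$ and that $H^t(X_{\mathcal B_\Lambda}) = \infty$ for every $t < \delta$. For the first claim, fix $t > \delta$ and let $\{U_i\}_{i \in F}$ be a finite cover of $X_{\mathcal B_\Lambda}$ with $\diam(U_i) < \epsilon$ for all $i \in F$. Then
\[ \sum_{i \in F} (\diam U_i)^t \leq \epsilon^{t-\delta} \sum_{i \in F}(\diam U_i)^\delta. \]
Taking the infimum over such covers and then letting $\epsilon \to 0$, the finiteness of $H^\delta(X_{\mathcal B_\Lambda})$ granted by Theorem \ref{thm:Hausd-meas-nontriv} forces $H^t(X_{\mathcal B_\Lambda}) = 0$.

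For the second claim, fix $t < \delta$ and apply the symmetric estimate $\sum_i(\diam U_i)^\delta \leq \epsilon^{\delta - t}\sum_i(\diam U_i)^t$ to any finite cover with diameter less than $\epsilon$. Taking the infimum over such covers yields, for each fixed $\epsilon > 0$,
\[ H^\delta(X_{\mathcal B_\Lambda}) \leq \epsilon^{\delta - t} \inf\Bigl\{\sum_{i \in F}(\diam U_i)^t : |F|<\infty,\ \cup_i U_i = X_{\mathcal B_\Lambda},\ \diam U_i < \epsilon\Bigr\}. \]
Since $H^\delta(X_{\mathcal B_\Lambda}) > 0$ by Theorem \ref{thm:Hausd-meas-nontriv} and $\epsilon^{\delta - t} \to 0$, the infimum on the right must tend to infinity as $\epsilon \to 0$, giving $H^t(X_{\mathcal B_\Lambda}) = \infty$.

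I do not anticipate any real obstacle, since the substantive work lies in Theorem \ref{thm:Hausd-meas-nontriv}; the corollary is a one-step consequence of scaling. The only point worth noting is that Definition \ref{def:hausdorff-measure} restricts to finite covers, but this is automatic given the compactness of $X_{\mathcal B_\Lambda}$, so both scaling estimates above apply without modification.
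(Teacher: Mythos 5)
Your proposal is correct and takes essentially the same route as the paper, which presents this corollary as an immediate consequence of Theorem \ref{thm:Hausd-meas-nontriv} together with the standard crossover characterization of Hausdorff dimension recalled in Definition \ref{def:hausdorff-measure} (no separate argument is given there beyond a pointer to the literature). One small correction: in your second claim the inequality you display should bound the $\epsilon$-level pre-measure $\inf\bigl\{\sum_{i}(\diam U_i)^\delta : \diam U_i<\epsilon\bigr\}$ --- which is $\le H^\delta(X_{\mathcal B_\Lambda})$ and increases to $H^\delta(X_{\mathcal B_\Lambda})>0$ as $\epsilon\to 0$ --- rather than $H^\delta(X_{\mathcal B_\Lambda})$ itself, but since that pre-measure is eventually bounded below by a positive constant the conclusion $H^t(X_{\mathcal B_\Lambda})=\infty$ for $t<\delta$ follows exactly as you intend.
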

Note that this Corollary can also be proved from  Lemma~2.11 and Theorem~2.12 in~\cite{JS-embedding}.

\begin{thm}
\label{thm:anHuef-Hausdorff-meas}
Let $\Lambda$ be a finite, strongly connected $k$-graph. If $(X_{\mathcal B_\Lambda}, d_{w_\delta})$ is a Cantor set of Hausdorff dimension $\delta$ for which Equation \eqref{eq:weight-diam} holds,
its normalized Hausdorff measure $\frac{1}{H^\delta(X_{\mathcal B_\Lambda})} H^\delta$ agrees with the measure $M$  of Equation \eqref{eq:M-measure}.
\end{thm}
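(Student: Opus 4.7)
The plan is to show that $H^\delta$ and $H^\delta(X_{\mathcal{B}_\Lambda})\cdot M$ agree on the family of ``square'' cylinder sets $\mathcal F := \{[\lambda] : |\lambda| = qk,\ q \in \N\}$. Since $\mathcal F$ is a $\pi$-system generating the Borel $\sigma$-algebra of $X_{\mathcal{B}_\Lambda}$ (by Lemma~4.1 of~\cite{FGKP}), and since both measures are finite on $X_{\mathcal{B}_\Lambda}$ (by Theorem~\ref{thm:Hausd-meas-nontriv} for $H^\delta$, and trivially for $M$), this will force $\frac{1}{H^\delta(X_{\mathcal{B}_\Lambda})} H^\delta = M$ on all Borel sets.

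The first step is a self-similarity identity for $H^\delta$ restricted to square cylinders. For any Bratteli path $\lambda$ with $|\lambda|=qk$, the prefixing map $\sigma_\lambda:[s(\lambda)]\to[\lambda]$, $x\mapsto \lambda x$ (cf.\ Remark~\ref{rmk:S2_shift}(b)), is a homeomorphism. A direct computation from Equation~\eqref{eq:w-delta} shows that for $x,y\in[s(\lambda)]$ with $|x\wedge y|=mk+t$ we have $|\lambda(x\wedge y)|=(q+m)k+t$ and
\[
 w_\delta(\lambda(x\wedge y)) \;=\; \rho^{-q/\delta}\, w_\delta(x\wedge y),
\]
so $\sigma_\lambda$ is a similarity of $(X_{\mathcal{B}_\Lambda},d_\delta)$ of ratio $\rho^{-q/\delta}$. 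Since Hausdorff measure in dimension $\delta$ scales by the $\delta$th power of the similarity ratio, this yields
\[
 H^\delta([\lambda]) \;=\; \rho^{-q}\, H^\delta([s(\lambda)]).
\]

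Next, setting $h_v:=H^\delta([v])$ for $v\in\Lambda^0$ and applying the disjoint decomposition $[v]=\bigsqcup_{\lambda\in v\Lambda^{\mathbf 1}}[\lambda]$ (with $\#(v\Lambda^{\mathbf 1}w)=A(v,w)$) together with the $q=1$ case of the above identity, I would conclude that $Ah=\rho h$ with $A=A_1\cdots A_k$, where $h=(h_v)_v\in\R^{\Lambda^0}_{\ge 0}\setminus\{0\}$ is nonzero since $\sum_v h_v=H^\delta(X_{\mathcal{B}_\Lambda})>0$ by Theorem~\ref{thm:Hausd-meas-nontriv}. The most delicate step is then to upgrade this to $h=c\,x^\Lambda$ with $c=H^\delta(X_{\mathcal{B}_\Lambda})$: using that $\Lambda$ is strongly connected, Lemma~4.1 of~\cite{aHLRS} gives that $\{A_i\}_{i=1}^k$ is an irreducible family, so applying the Perron--Frobenius-type argument used in the proof of Corollary~\ref{cor:dixmier-aHLRS}---averaging $h$ by a strictly positive matrix $A_F$ built as a finite sum of finite products of the $A_i$'s (as in Proposition~3.1 of~\cite{aHLRS})---forces $h$ to be a positive scalar multiple of the Perron--Frobenius eigenvector $x^\Lambda$. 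Normalizing using $\sum_v x^\Lambda_v=1$ gives $h=H^\delta(X_{\mathcal{B}_\Lambda})\,x^\Lambda$.

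Combining this with the self-similarity identity and the formula~\eqref{eq:M-measure} for $M$ (specialized to $t=0$), one obtains, for every square path $\lambda$ with $|\lambda|=qk$,
\[
 H^\delta([\lambda]) \;=\; \rho^{-q}\,h_{s(\lambda)} \;=\; H^\delta(X_{\mathcal{B}_\Lambda})\,\rho^{-q}\,x^\Lambda_{s(\lambda)} \;=\; H^\delta(X_{\mathcal{B}_\Lambda})\, M([\lambda]),
\]
which is precisely the identity required for the generating-family argument above. The main obstacle is the eigenvector-uniqueness step, since $A=A_1\cdots A_k$ itself need not be irreducible even when $\Lambda$ is strongly connected (one can build small $2$-graph examples in which $A_1A_2$ is a scalar multiple of the identity); the argument therefore must faithfully leverage the irreducibility of the \emph{family} $\{A_i\}_{i=1}^k$ rather than of $A$ alone, essentially mirroring the averaging trick with $A_F$ used in Corollary~\ref{cor:dixmier-aHLRS}.
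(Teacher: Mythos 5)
Your similarity computation for square paths, the resulting identity $H^\delta([\lambda])=\rho^{-q}H^\delta([s(\lambda)])$ for $|\lambda|=qk$, the eigenvector equation $Ah=\rho h$ for $h=(H^\delta([v]))_{v}$, and the closing $\pi$-system argument are all sound. The gap is precisely at the step you yourself flag as delicate, and the proposed repair does not close it. The matrix $A_F$ commutes with $A$, so it preserves the $\rho$-eigenspace of $A$; all the averaging trick can produce (as in the proof of Proposition \ref{pr:spec-rad-cantor}) is a \emph{strictly positive} $\rho$-eigenvector $A_Fh$, which is useless when that eigenspace meets the nonnegative cone in more than the ray through $x^\Lambda$. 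This really happens under the hypotheses of Theorem \ref{thm:anHuef-Hausdorff-meas}: take $\Lambda^0=\{1,2\}$ and $A_1=A_2=\begin{pmatrix}0&2\\2&0\end{pmatrix}$ (these commute, every range--source-preserving bijection of bicoloured paths yields a genuine $2$-graph, the family is irreducible, and Equation \eqref{eq:weight-diam} holds). Then $A=4I$, the equation $Ah=\rho h$ is vacuous, and no manipulation by matrices commuting with $A$ can force $h\propto x^\Lambda=(\tfrac12,\tfrac12)$. In effect your route reruns the argument of Corollary \ref{cor:dixmier-aHLRS}, and therefore silently inherits that corollary's extra hypothesis that $A=A_1\cdots A_k$ be irreducible --- a hypothesis Theorem \ref{thm:anHuef-Hausdorff-meas} deliberately does not impose.

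What is missing is the scaling relation in the individual colours, $A_ih=\rho_ih$ for each $i$, which together with the uniqueness of the common Perron--Frobenius eigenvector of the irreducible \emph{family} $\{A_i\}$ (Proposition 3.1 of \cite{aHLRS}) does pin down $h$. This cannot be extracted from your similarity argument, because prefixing by a path $\lambda$ whose degree is not a multiple of $(1,\dots,1)$ is \emph{not} a similarity of $(X_{\mathcal B_\Lambda},d_\delta)$: the ratio $w_\delta(\lambda\eta)/w_\delta(\eta)$ then depends on where $\eta$ sits in the rainbow cycle. The paper instead proves $H^\delta([\lambda])=\rho(\Lambda)^{-d(\lambda)}H^\delta([s(\lambda)])$ for \emph{every} $\lambda\in\Lambda$ by comparing covers directly and using that the $\delta$-th powers of the diameters, rather than the diameters themselves, scale by the constant factor $\rho(\Lambda)^{-d(\lambda)}$; it then invokes the uniqueness clause of Proposition 8.1 of \cite{aHLRS} for Borel probability measures satisfying the full scaling relation. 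Supplying the colour-by-colour scaling (or quoting that uniqueness statement after establishing it) is the missing ingredient your proof needs.
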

\begin{proof}
We will show that the Hausdorff measure $H^\delta$ (which is finite and nonzero by Theorem \ref{thm:Hausd-meas-nontriv} above) has the same scaling property as the measure $M$.  Since we know from Proposition 8.1 of \cite{aHLRS} that $M$ is the unique Borel probability measure on $\Lambda^\infty$ with 
\[M([\lambda]) = \rho(\Lambda)^{-d(\lambda)} M([s(\lambda)])\]
for all cylinder sets $[\lambda]$,
showing that $H^\delta([\lambda]) = \rho(\Lambda)^{-d(\lambda)} H^\delta([s(\lambda)])$ for all $\lambda \in \Lambda$ will complete the proof.

Since the cylinder sets $[\lambda] \subseteq \Lambda^\infty$ are compact open sets, standard arguments about Hausdorff measure show that 
\[ H^\delta ([\lambda])  = \lim_{\epsilon \to 0}\,  \inf\left\{ \sum_{\eta_i \in F} (\text{diam } [\lambda \eta_i])^\delta: \#(F) < \infty, [\lambda] = \cup_i [\lambda \eta_i] , \ \text{diam } [\lambda \eta_i] < \epsilon \text{ for all } i\right\}.\]
Moreover, recall from  Proposition \ref{pr:delta-weight} and Equation \eqref{eq:diam} that for the metric $d_\delta$, 
\begin{align*}
\text{diam } [\lambda \eta_i] &= \begin{cases}
\rho(\Lambda)^{-d(\gamma_i)/\delta} x^\Lambda_{s(\gamma_i)}, & \gamma_i \text{ is the shortest extension of } \lambda \eta_i \\
\quad & \quad \text{ such that  } |\gamma_i| = qk + t \text{ and } \#(s(\gamma_i) \Lambda^{e_{t+1}} )\not= 1\\
0, & \#[\lambda \eta_i] = 1
\end{cases} 
\end{align*}

Thus, $\{ \lambda \eta_i \}_i$ is a cover of $[\lambda]$ of maximum radius $\epsilon$, iff $\{ \eta_i \}_i$ is a cover of $[s(\lambda)]$ of maximum radius $\epsilon \rho(\Lambda)^{d(\lambda)/\delta}$.    Moreover, 
\[( \text{diam }[\lambda \eta_i])^\delta 
= \rho(\Lambda)^{-d(\lambda)} (\text{diam }[\eta_i])^\delta.\]
It follows that, as desired, 
\begin{align*}
H^\delta ([\lambda]) &= \lim_{ \epsilon \to 0} \inf \left\{ \sum_{\eta_i \in F} (\text{diam } [ \lambda \eta_i])^\delta: \#(F) < \infty,  [s(\lambda)] = \cup_i  [\eta_i] , \ \text{diam } [\eta_i] < \rho(\Lambda)^{d(\lambda) }\epsilon \text{ for all } i\right\}\\
&= \rho(\Lambda)^{-d(\lambda)} H^\delta([s(\lambda)]). \qedhere
\end{align*}
\end{proof}
\begin{rmk} 
Note that Theorem \ref{thm:anHuef-Hausdorff-meas} does not contradict the uniqueness of the nontrivial Hausdorff measure.  Indeed, it proves a stronger uniqueness result: Not only do all of our ultrametric Cantor sets $(X_{\mathcal B_\Lambda}, d_\delta)$ possess a unique nontrivial Hausdorff measure $H^\delta$, but (up to scaling)  this measure is independent of $\delta$. 
 In other words, the Hausdorff measure of $(X_{\mathcal B_\Lambda}, d_\delta)$ depends almost entirely on the topological structure of the infinite path space, rather than on the choice of geometry via the ultrametric $d_\delta$.
\end{rmk}

\section{Eigenvectors of Laplace-Beltrami operators and the wavelets of \cite{FGKP}}
\label{sec-wavelets-as-eigenfunctions}
In this section, we investigate the relation between the decomposition of $L^2(\Lambda^\infty, \mu_\delta)$ via the eigenspaces of the  Laplace-Beltrami operators $\Delta_s$ associated to the {spectral triples of Section \ref{sec:zeta-regular} for the }ultrametric Cantor set $(X_{\mathcal{B}_\Lambda}, d_{w_\delta})$ of Corollary~\ref{cor:ultrametric-Cantor}, and the wavelet decomposition of $L^2(\Lambda^\infty, M)$ given in Theorem 4.2 of \cite{FGKP}.
Theorem \ref{thm:JS-wavelets} below shows  that when  the measure $\mu_\delta$ induced by the Dixmier trace agrees with the measure $M$, the eigenspaces of the Laplace-Beltrami operators refine the wavelet decomposition of \cite{FGKP}.  In order to state and prove this Theorem, we first review the two orthogonal decompositions mentioned above.

According to Section~8.3 of \cite{pearson-bellissard} and Section~4 of \cite{julien-savinien}, for each $s\in \R$ the even spectral triple associated to the weighted stationary $k$-Bratteli diagram $(\mathcal{B}_\Lambda,w_\delta)$ of Corollary~\ref{cor:ultrametric-Cantor} gives rise to a Laplace-Beltrami operator $\Delta_s$ on $L^2(X_{\mathcal{B}_\Lambda}, \mu)$  as follows:
\[
\langle f, \Delta_s(g)\rangle =Q_s(f,g):=\frac{1}{2}\int_{\Upsilon}\text{Tr}(\vert D\vert^{-s}[D,\pi_\tau(f)]^\ast [D,\pi_\tau(g)]\, d\nu(\tau),
\]
where $\Upsilon$ is the set of choice functions and $\nu$ is the measure on $\Upsilon$ induced from the Dixmier measure $\mu_\delta$ on the infinite path space. Thanks to Section 8.1 of \cite{pearson-bellissard}, we know that $Q_s$ is a closable Dirichlet form for all $s\in \R$ and it has a dense domain that is generated by a set of characteristic functions on cylinder sets of $X_{\mathcal{B}_\Lambda}$. Also, by applying the work of \cite{pearson-bellissard} and \cite{julien-savinien} to our weighted stationary $k$-Bratteli diagram $\mathcal{B}_\Lambda$, we can obtain the explicit formula for $\Delta_s$ on characteristic functions as follows.

For a finite path $\eta = (\eta_i)_{i=1}^{|\eta|}$ {(where each  $\eta_i$ is an edge)} in $\mathcal{B}_\Lambda$, we write  $\chi_{[\eta]}$ for the characteristic function of the set $[\eta] \subseteq  X_{\mathcal{B}_\Lambda}$ of infinite paths of $\mathcal{B}_\Lambda$ whose initial segment is $\eta$, and $\eta(0, i)$ for $\eta_1 \cdots \eta_i$. We denote by $\eta(0, 0)$ the vertex $r(\eta)$. Also, for $\gamma \in F\mathcal{B}_\Lambda$, we set
 \[
 \frac{1}{F_\gamma} = \sum_{(e, e') \in \text{ext}_1 (\gamma)} \mu([\gamma e]) \mu([\gamma e']), 
 \]
  where $\text{ext}_1(\gamma)$ is the set of pairs $(e, e')$ of edges in $\mathcal{B}_\Lambda$ with $e \not= e'$ and $r(e) = r(e') = s(\gamma)$.
  
  {From Remark \ref{rmk:weight-diam}, we know that if Equation \eqref{eq:weight-diam} holds for the weighted stationary $k$-Bratteli diagram $(\mathcal B_\Lambda, w_\delta)$ associated to a higher-rank graph $\Lambda$, then $\text{ext}_1(\gamma)$ is nonempty for all $\gamma \in F\mathcal B_\Lambda$.  Since we need to invoke Equation \eqref{eq:weight-diam} in order to guarantee that the measures $\mu_\delta$ and $M$  agree on $X_{\mathcal B_\Lambda} \cong \Lambda^\infty$, we will also assume without loss of generality that $\text{ext}_1(\gamma) $ is always nonempty; equivalently, that $F_\gamma < \infty$.}
Then{, as in Section 4 of \cite{julien-savinien},} for each $s\in \R$, we have

\begin{align*}\Delta_s(\chi_{[\eta]}) &  = -\sum_{i=0}^{|\eta|-1} 2 F_{\eta(0,i)} w(\eta(0,i))^{s-2}\left(\mu([\eta(0,i)] \backslash [\eta(0, i+1)] ) \chi_{[\eta]} \right. \\
& \qquad \quad \left. - \mu([\eta]) \chi_{[\eta(0,i)]\backslash [\eta(0, i+1)]} \right).\end{align*}

Moreover, Theorem 4.3 of \cite{julien-savinien} tells us that each finite path $\gamma$ in $\mathcal{B}_\Lambda$ determines an eigenspace $E_\gamma$ for $\Delta_s$:
\begin{equation}\label{eq:eigenspace_E_gamma}
 E_\gamma = \text{span} \left\{ \frac{1}{\mu[\gamma e]} \chi_{[\gamma e]} - \frac{1}{\mu[\gamma e']} \chi_{[\gamma e']}: (e, e') \in \text{ext}_1(\gamma) \right\}.\end{equation}
The other eigenspaces of $\Delta_s$ are $E_{-1} = \text{span} \{\chi_{\Lambda^\infty} \}$ and 
\[E_0 = \text{span}\left\{ \frac{1}{\mu([v])} \chi_{[v]} -\frac{1}{\mu([v'])} \chi_{[v']}: v \not= v' \in \mathcal{V}_0 \right\}.\]
Observe that if $\gamma \not= \eta$, then $E_\gamma \perp E_\eta$.  
Also, the eigenspaces of $\Delta_s$ are independent of $s$, although the eigenvalues (as described in \cite{pearson-bellissard, julien-savinien}) depend on the choice of $s \in \R$.

We now review the construction of the wavelet decomposition (for a finite, strongly connected $k$-graph $\Lambda$)  of $L^2(\Lambda^\infty, M)$ which was introduced in Section 4 of \cite{FGKP}. 

For each $n \in \N$, write 
\[\mathscr{V}_n = \text{span} \{ \chi_{[\lambda]}: d(\lambda) = (n, \ldots, n)\},\]
and define 
\[\mathcal{W}_n = \mathscr{V}_{n+1} \cap \mathscr{V}_{n}^\perp.\] 
We know from Lemma~4.1 of \cite{FGKP} that 
\[ \{ \chi_{[\lambda]}: d(\lambda) = (n, \ldots, n) \text{ for some } n \in \N\}\]
densely spans $L^2(\Lambda^\infty, M)$.  Consequently, 
\begin{equation}
L^2(\Lambda^\infty, M)= \mathscr{V}_0 \oplus \bigoplus_{n\in \N} \mathcal{W}_n;
\label{eq:wavelet-decomp}
\end{equation}
Proposition \ref{prop:wavelets-reformulated} below shows that this orthogonal decomposition  is precisely the wavelet decomposition of Theorem 4.2 of \cite{FGKP}.

Both for the proof of Proposition \ref{prop:wavelets-reformulated} and our main result, Theorem \ref{thm:JS-wavelets}, it will be convenient to work with a specific basis for $\mathcal{W}_0$.  For each vertex $v$ in $\Lambda$, let 
\[ D_v =  v\Lambda^{(1,\ldots, 1)} .\] 
One can show (cf.~\cite{aHLRS} Lemma 2.1(a)) that $D_v$ is always nonempty.

Enumerate the elements of $D_v$ as $D_v = \{ \lambda_0, \ldots, \lambda_{\#(D_v) -1}\}.$
Observe that if $D_v = \{ \lambda\}$ is a 1-element set, then $[v] = [\lambda]$.  If $\#(D_v) > 1$, that is,  the cardinality of $D_v$ is greater than one, 
then for each  $1 \leq i \leq \#(D_v) -1$, we define 
\begin{equation}\label{eq:f_iv}
f^{i,v} = \frac{1}{M[\lambda_0]} \chi_{[\lambda_0]} - \frac{1}{M[\lambda_i]} \chi_{[\lambda_i]}.
\end{equation}
One easily checks that in $L^2(\Lambda^\infty, M)$, $\langle f^{i,v} , \chi_{[w]} \rangle = 0$ for all $i$ and all vertices $v, w$, and that \[ \{ f^{i, v}: v \in \Lambda^0, 1 \leq i\leq \#(D_v)-1\}\]
is a linearly independent set in $\mathcal{W}_0 =\mathscr{V}_1\cap \mathscr{V}_0^\perp  \subseteq L^2(\Lambda^\infty, M)$. 

To see that $\{ f^{i,v}\}_{i,v}$ is a basis for $\mathcal W_0$, we will show that $\dim \mathcal{W}_0 = \dim \text{span}\, \{f^{i,v}\}_{i,v}.$  First, note that for any $n \in \N$, the set $\{ \chi_{[\lambda]}: \lambda \in \Lambda^{(n, \ldots, n)}\}$
is a basis for $\mathscr{V}_n$.  Thus, \[ \dim \mathcal{W}_0 = \dim \mathscr{V}_1 - \dim \mathscr{V}_0 = \#(\Lambda^{(1, \ldots, 1)}) - \#(\Lambda^0).\]
On the other hand, 
	\[ \# \Big( \{ f^{i,v}: v \in \Lambda^0, 1 \leq i\leq \#(D_v)-1\}\Big) = \sum_{v \in \Lambda^0} (\#(D_v) -1 ) = \#(\Lambda^{(1, \ldots, 1)}) - \#(\Lambda^0),\]
so $\mathcal{W}_0 = \text{span} \, \{ f^{i,v}\}_{i,v}$ as claimed.

For each $\lambda \in \Lambda$, define the operator $S_\lambda \in B(L^2(\Lambda^\infty, M))$ by 

\begin{equation}\label{eq:S-lambda-cylinder}
S_\lambda \chi_{[\eta]} = \begin{cases}
\rho(\Lambda)^{d(\lambda)/2} \chi_{[\lambda \eta]}, & s(\lambda) = r(\eta) \\
0, & \text{ else.}
\end{cases}
\end{equation}

We can think of these operators as combined ``scaling and translation'' operators, since they change both the size and the range of a cylinder set $[\eta]$, and are intimately tied to the geometry of the $k$-graph $\Lambda$.

The operators $S_\lambda$ were introduced in Theorem 3.5 of \cite{FGKP}, and in fact they give rise to a representation of the $C^*$-algebra $C^*(\Lambda)$ of the $k$-graph $\Lambda$.  We discuss this in more detail in Section \ref{sec-Consani-Marcolli-spectral-triples-for-k-graphs} below; see Equations \eqref{eq:S_lambda} and \eqref{eq:S-lambda-star}.

The following Proposition reconciles our definition of $\mathcal{W}_n := \mathscr{V}_{n+1} \cap \mathscr{V}_n^\perp$ with the wavelet subspaces which were denoted $\mathcal{W}_{n, \Lambda}$ in  
 Theorem 4.2 of \cite{FGKP}.  By showing that each $\mathcal{W}_n$ can be obtained from $\mathcal{W}_0$ via certain ``scaling and translation'' operators $S_\lambda$, this justifies describing the orthogonal decomposition of $L^2(\Lambda^\infty, M)$ given in \eqref{eq:wavelet-decomp} as a wavelet decomposition.

\begin{prop}\label{prop:S_n}
For any $n \in \N$,
the set  
 	\[S_n = \{ S_\lambda f^{i, s(\lambda)}: d(\lambda) = (n, \ldots, n), 1 \leq i \leq \#(D_{s(\lambda)}) - 1\}\] 
 is a basis for $\mathcal{W}_n=\mathscr{V}_{n+1}\cap \mathscr{V}_n^{\perp}$.
\label{prop:wavelets-reformulated}
\end{prop}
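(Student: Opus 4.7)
The plan is to establish three claims: (a) every element of $S_n$ lies in $\mathcal{W}_n$; (b) the set $S_n$ is linearly independent; and (c) $\#(S_n) = \dim \mathcal{W}_n$. The last two dimensions are both finite, so together these facts identify $S_n$ as a basis of $\mathcal{W}_n$.

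For (a), I would first use Equation~\eqref{eq:S-lambda-cylinder} to compute, for $\lambda$ with $d(\lambda) = (n, \ldots, n)$ and $v = s(\lambda)$,
\[
S_\lambda f^{i,v} = \rho(\Lambda)^{d(\lambda)/2}\left(\frac{1}{M[\lambda_0]}\chi_{[\lambda\lambda_0]} - \frac{1}{M[\lambda_i]}\chi_{[\lambda\lambda_i]}\right),
\]
where $\{\lambda_0, \ldots, \lambda_{\#(D_v)-1}\}$ enumerates $D_v = v\Lambda^{(1,\ldots,1)}$. Since $d(\lambda\lambda_0) = d(\lambda\lambda_i) = (n+1,\ldots,n+1)$, this expression lies in $\mathscr V_{n+1}$. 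To show orthogonality to $\mathscr V_n$, I would take an arbitrary $\mu \in \Lambda^{(n,\ldots,n)}$ and compute $\langle S_\lambda f^{i,v}, \chi_{[\mu]}\rangle$. When $\mu \neq \lambda$ the supports $[\lambda]$ and $[\mu]$ are disjoint, so the inner product vanishes. When $\mu = \lambda$, Equation~\eqref{eq:kgraph_Measure} gives $M([\lambda\lambda_j]) = \rho(\Lambda)^{-d(\lambda)}M([\lambda_j])$ for $j \in \{0,i\}$, so both terms collapse to $\rho(\Lambda)^{-d(\lambda)}$ and cancel. This is the one computation that requires genuine care, as one must track the interaction between the $\rho(\Lambda)^{d(\lambda)/2}$ factor from $S_\lambda$ and the scaling of the Perron--Frobenius-weighted measure $M$.

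For (b), I would exploit that distinct $\lambda, \lambda' \in \Lambda^{(n,\ldots,n)}$ give $S_\lambda f^{i,s(\lambda)}$ and $S_{\lambda'}f^{j,s(\lambda')}$ with disjoint supports $[\lambda]$ and $[\lambda']$. Thus any linear relation $\sum_{\lambda,i}c_{\lambda,i}S_\lambda f^{i,s(\lambda)} = 0$ decouples into separate relations $\sum_i c_{\lambda,i}S_\lambda f^{i,s(\lambda)} = 0$ for each $\lambda$. Since $S_\lambda$ multiplies cylinder functions on $[s(\lambda)]$ by the nonzero scalar $\rho(\Lambda)^{d(\lambda)/2}$ and reindexes them bijectively to cylinder functions on $[\lambda]$, it is injective on the span of $\{\chi_{[\lambda_j]}\}_j$; combined with the linear independence of $\{f^{i,s(\lambda)}\}_i$ noted in the paragraph preceding the Proposition, this forces all $c_{\lambda,i} = 0$.

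For (c), the dimension count is purely combinatorial: $\dim\mathscr V_n = \#\Lambda^{(n,\ldots,n)}$ so $\dim\mathcal{W}_n = \#\Lambda^{(n+1,\ldots,n+1)} - \#\Lambda^{(n,\ldots,n)}$, while the factorization property yields a bijection
\[
\Lambda^{(n+1,\ldots,n+1)} \;\longleftrightarrow\; \{(\lambda,\eta) : \lambda\in\Lambda^{(n,\ldots,n)},\ \eta\in s(\lambda)\Lambda^{(1,\ldots,1)}\},
\]
hence $\sum_{\lambda\in\Lambda^{(n,\ldots,n)}}\#D_{s(\lambda)} = \#\Lambda^{(n+1,\ldots,n+1)}$, so $\#(S_n) = \sum_\lambda(\#D_{s(\lambda)}-1) = \dim\mathcal{W}_n$. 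The main obstacle is really just the careful bookkeeping in step (a); once the scaling factors align, (b) and (c) are essentially immediate.
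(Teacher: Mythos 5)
Your proposal is correct and follows essentially the same route as the paper's proof: show membership in $\mathcal{W}_n$ via the explicit formula for $S_\lambda f^{i,v}$ and the scaling identity $M([\lambda\lambda_j]) = \rho(\Lambda)^{-d(\lambda)}M([\lambda_j])$, establish linear independence from disjointness of the cylinder sets $[\lambda\lambda_i]$, and conclude with the dimension count $\#(S_n) = \#(\Lambda^{(n+1,\ldots,n+1)}) - \#(\Lambda^{(n,\ldots,n)}) = \dim\mathcal{W}_n$. The only cosmetic difference is that you justify linear independence via injectivity of $S_\lambda$ combined with independence of the $f^{i,v}$, whereas the paper reads it off directly from $[\lambda\lambda_i]\cap[\lambda'\lambda_i'] = \delta_{\lambda,\lambda'}\delta_{\lambda_i,\lambda_i'}[\lambda\lambda_i]$; both are fine.
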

\begin{proof}
The formula \eqref{eq:S-lambda-cylinder} shows that if $d(\lambda) = (n, \ldots, n),$ then $S_\lambda f^{i,s(\lambda)}$ is a linear combination of characteristic functions of cylinder sets of degree $(n+1, \ldots, n+1)$.  Thus, to see that $S_\lambda f^{i, s(\lambda)} \in \mathcal{W}_n$ for each such   $\lambda$ and each $1 \leq i \leq \#(D_{s(\lambda)}) -1$, we must check that $\langle S_\lambda f^{i, s(\lambda)}, \chi_{[\eta]} \rangle = 0$ whenever $d(\eta) = (n, \ldots, n)$.  We compute:
\begin{align*}
\frac{1}{\rho(\Lambda)^{d(\lambda)/2}} \langle S_\lambda f^{i, s(\lambda)}, \chi_{[\eta]} \rangle &= \frac{1}{M[\lambda_0]} \int_{\Lambda^\infty} \chi_{[\eta]} \chi_{[\lambda \lambda_0]} \, dM - \frac{1}{M[\lambda_i]}\int_{\Lambda^\infty} \chi_{[\eta]} \chi_{[\lambda \lambda_i]} \, dM \\
&= \begin{cases}
0, & \eta \not= \lambda \\
\frac{M[\lambda \lambda_0]}{M[\lambda_0]} - \frac{M[\lambda \lambda_i]}{M[\lambda_i]}, & \lambda = \eta.
\end{cases}
\end{align*}

Using the formula for $M$ given in \eqref{eq:kgraph_Measure}, we see that

\[ \frac{M[\lambda \lambda_0]}{M[\lambda_0]} - \frac{M[\lambda \lambda_i]}{M[\lambda_i]} = \rho(\Lambda)^{-d(\lambda)} - \rho(\Lambda)^{-d(\lambda)} = 0.\]
In other words, $ \langle S_\lambda f^{i, s(\lambda)}, \chi_{[\eta]}\rangle_M = 0$ always, so $S_\lambda f^{i, s(\lambda)} \perp \mathscr{V}_n$, and hence every element $S_\lambda f^{i, s(\lambda)} \in \mathcal{W}_n$.

Moreover, $S_n$ is easily seen to be a linearly independent set, since if $d(\lambda) = d(\lambda') = (n, \ldots, n)$ and $d(\lambda_i) = d(\lambda_i') = (1, \ldots, 1)$, 
\[ [\lambda \lambda_i] \cap [\lambda' \lambda_i'] = \delta_{\lambda, \lambda'} \delta_{\lambda_i,\lambda_i'} [\lambda \lambda_i].\]
Since  $\dim \mathcal{W}_n = \dim \mathscr{V}_{n+1} - \dim \mathscr{V}_n  = \#(\Lambda^{(n+1, \ldots, n+1)}) - \#(\Lambda^{(n, \ldots, n)}) $
and 
\[\#(S_n) = \sum_{\lambda \in \Lambda^{(n,\ldots, n)}} (\#(D_{s(\lambda)}) -1) = \#(\Lambda^{(n+1, \ldots, n+1)}) - \#(\Lambda^{(n, \ldots, n)})\]
we  have $\mathcal{W}_n = \text{span}\, S_n$ as claimed. 
\end{proof}

\subsection{Wavelets and eigenspaces for $\Delta_s$}\label{sec:k-wavelet}

In this section, we prove our Theorem relating the wavelet decomposition \eqref{eq:wavelet-decomp} with the eigenspaces $E_\gamma$ of the Laplace-Beltrami operators $\Delta_s$ in the case when $A := A_1 \cdots A_k$ is irreducible.
Recall from Corollary \ref{cor:dixmier-aHLRS}  that in this case, the  measure $\mu = \mu_\delta$ used in the following theorem agrees with the measure $M$ from Proposition 8.1 of \cite{aHLRS}, which was described in Equation \eqref{eq:M-measure} above. 

\begin{thm}\label{thm:wavelet-k}
Let $\Lambda$ be a finite, strongly connected $k$-graph with adjacency matrices $A_i$. Suppose that $A=A_1\cdots A_k$ is irreducible. For any weight $w_\delta$ on the associated Bratteli diagram $\mathcal{B}_\Lambda$ as in Proposition \ref{pr:delta-weight},
{such that Equation \eqref{eq:weight-diam} is valid for the ultrametric Cantor set $(\Lambda^\infty, d_{w_\delta})$,}
the eigenspaces of the associated Laplace-Beltrami operators $\Delta_s$ refine the wavelet decomposition of \eqref{eq:wavelet-decomp}: 
\[\mathscr{V}_0 = E_{-1} \oplus E_0 \quad \text{ and } \quad \mathcal{W}_n = \text{span}\, \{ E_\gamma: |\gamma| = n k + t,\, 0 \leq t \leq k-1\}.\]
\label{thm:JS-wavelets}
\end{thm}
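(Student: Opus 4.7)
The plan is to prove $\mathscr{V}_0 = E_{-1} \oplus E_0$ by a quick inclusion and dimension count, and then to show for each finite path $\gamma$ with $nk \leq |\gamma| < (n+1)k$ that $E_\gamma \subseteq \mathcal W_n$; equality of $\bigoplus_{nk \leq |\gamma| < (n+1)k} E_\gamma$ with $\mathcal W_n$ will then follow from a telescoping dimension count.

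For the first claim, $E_{-1} = \operatorname{span}\{\chi_{\Lambda^\infty}\}$ and $E_0$ are manifestly contained in $\mathscr{V}_0 = \operatorname{span}\{\chi_{[v]} : v \in \Lambda^0\}$. Orthogonality is the one-line computation $\langle \chi_{\Lambda^\infty},\, \mu[v]^{-1}\chi_{[v]} - \mu[v']^{-1}\chi_{[v']}\rangle = 1 - 1 = 0$, and by the Julien--Savinien theorem $\dim E_{-1} + \dim E_0 = 1 + (\#\Lambda^0 - 1) = \dim \mathscr{V}_0$.

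For the main claim, fix $\gamma \in F\mathcal{B}_\Lambda$ with $|\gamma| = nk + t$, $0 \leq t \leq k-1$. To see $E_\gamma \subseteq \mathscr{V}_{n+1}$, note that for any extending edge $e$ of $\gamma$, $|\gamma e| \leq (n+1)k$; using that $\Lambda$ is source-free together with the identification in Proposition~\ref{pr:kgraph-bratteli-inf-path-spaces}, the cylinder $[\gamma e]$ decomposes as a finite disjoint union of ``square'' cylinders $[\lambda]$ with $d(\lambda) = (n+1,\ldots,n+1)$, and so $\chi_{[\gamma e]} \in \mathscr{V}_{n+1}$. For orthogonality to $\mathscr{V}_n$, take $\eta \in \Lambda^{(n,\ldots,n)}$, corresponding to a Bratteli path of length $nk$. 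Since $|\eta| = nk \leq |\gamma|$, either $\eta$ is a prefix of $\gamma$, in which case both $[\gamma e], [\gamma e'] \subseteq [\eta]$, or $[\gamma e] \cap [\eta] = [\gamma e'] \cap [\eta] = \emptyset$. In either case,
\[
\langle \mu[\gamma e]^{-1}\chi_{[\gamma e]} - \mu[\gamma e']^{-1}\chi_{[\gamma e']},\, \chi_{[\eta]}\rangle = 0.
\]
Since Julien--Savinien established that the $E_\gamma$'s are pairwise orthogonal, this yields $\bigoplus_{nk \leq |\gamma| < (n+1)k} E_\gamma \subseteq \mathcal{W}_n$.

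To obtain equality, I will count dimensions. Each $E_\gamma$ has dimension $\#\{e : r(e) = s(\gamma)\} - 1$, and for each fixed $t \in \{0,\ldots,k-1\}$, summing over $|\gamma| = nk+t$ gives $\#F^{nk+t+1}\mathcal{B}_\Lambda - \#F^{nk+t}\mathcal{B}_\Lambda$ (since each length-$(nk+t+1)$ Bratteli path factors uniquely as $\gamma e$). Summing over $t$ telescopes to $\#F^{(n+1)k}\mathcal{B}_\Lambda - \#F^{nk}\mathcal{B}_\Lambda$; by Remark~\ref{rmk:no-bij-finite-paths}(b) the Bratteli paths of length a multiple of $k$ are in bijection with morphisms in $\Lambda$ of ``square'' degree, so this equals $\#\Lambda^{(n+1,\ldots,n+1)} - \#\Lambda^{(n,\ldots,n)} = \dim \mathcal{W}_n$.

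The main obstacle is careful bookkeeping for paths in the stationary $k$-Bratteli diagram whose length is not a multiple of $k$: such paths do not correspond to ordinary $k$-graph morphisms but to rainbow paths, and this mismatch must be respected both when decomposing $\chi_{[\gamma e]}$ into square cylinders (for the $\mathscr{V}_{n+1}$ containment) and when invoking the honest-bijection step at the end of the dimension count.
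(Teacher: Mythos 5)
Your proposal is correct, and the overall architecture (establish the containment $E_\gamma \subseteq \mathcal{W}_n$, then match dimensions by the telescoping count $\sum_{t}(\#F^{nk+t+1}\mathcal{B}_\Lambda - \#F^{nk+t}\mathcal{B}_\Lambda) = \#\Lambda^{(n+1,\ldots,n+1)} - \#\Lambda^{(n,\ldots,n)} = \dim\mathcal{W}_n$) is exactly the paper's. Where you differ is in how the containment is proved. The paper works with the wavelet basis $S_n = \{S_{\eta_n} f^{i,r(\eta_t)}\}$ of Proposition \ref{prop:wavelets-reformulated}: it writes a generator $f_\eta$ of $E_\eta$ as an explicit linear combination of the $S_{\eta_n} f^{i, r(\eta_t)}$, which requires enumerating the degree-$(1,\ldots,1)$ extensions of $\eta_t e$ and $\eta_t e'$ and verifying that the coefficient of the ``preferred'' term $\chi_{[\eta_n\lambda_0]}$ vanishes (the computation in Equation \eqref{eq:f-eta-in-W-n}). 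You instead use the definition $\mathcal{W}_n = \mathscr{V}_{n+1}\cap\mathscr{V}_n^\perp$ directly: membership in $\mathscr{V}_{n+1}$ follows from decomposing $[\gamma e]$ into square cylinders, and orthogonality to $\mathscr{V}_n$ is the one-line computation $\mu[\gamma e]^{-1}\mu([\gamma e]\cap[\eta]) - \mu[\gamma e']^{-1}\mu([\gamma e']\cap[\eta]) \in \{1-1,\, 0-0\}$. Your route is shorter and bypasses Proposition \ref{prop:wavelets-reformulated} entirely; what the paper's computation buys is the explicit expression of the Laplace--Beltrami eigenfunctions in terms of the scaling-and-translation operators $S_\lambda$, which makes the link to the wavelet structure (rather than merely to the subspaces $\mathscr{V}_{n+1}\cap\mathscr{V}_n^\perp$) concrete. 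The same remark applies to your dimension-count treatment of $\mathscr{V}_0 = E_{-1}\oplus E_0$, where the paper instead exhibits each $\chi_{[v]}$ explicitly as an element of $E_{-1}\oplus E_0$. The bookkeeping caveat you flag at the end is handled correctly: all the paths you need to convert between $\mathcal{B}_\Lambda$ and $\Lambda$ are rainbow paths, so Remark \ref{rmk:no-bij-finite-paths}(b) applies.
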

\begin{proof}
First observe that under  the identification of $\Lambda^0 \subseteq \Lambda$ with $\mathcal{V}_0 \subseteq \mathcal{B}_\Lambda,$ we have $E_0 \subseteq \mathscr{V}_0$ and $E_{-1} \subseteq \mathscr{V}_0$, since the spanning vectors of both $E_0$ and $E_{-1}$ are linear combinations of $\chi_{[v]}$ for vertices $v$. Thus $E_{-1} \oplus E_0\subset \mathscr{V}_0$. For the other inclusion, we compute
\begin{align*} \left( 1 + \sum_{w \not= v \in \Lambda^0} \frac{\mu[w]}{\mu[v]} \right) \chi_{[v]} &= \chi_{\Lambda^\infty} - \sum_{w\not= v \in \Lambda^0}  \chi_{[w]}  + \sum_{w \not= v} \frac{\mu[w]}{ \mu[v]}  \chi_{[v]} \\
&= \chi_{\Lambda^\infty} -\sum_{w \not= v} \mu[w] \left( \frac{1}{\mu[w]} \chi_{[w]} - \frac{1}{\mu[v]} \chi_{[v]} \right)  .
\end{align*}
By rescaling, we see that $\chi_{[v]} \in E_{-1} \oplus E_0$, and hence $\mathscr{V}_0 = E_{-1} \oplus E_0$ as claimed.

To examine the claim about $\mathcal{W}_n$, let $\eta\in F\mathcal{B}_\Lambda$ with $|\eta|= n k + t$.  In other words, $\eta$ represents an element of degree $(\overbrace{
n+1, \ldots, n+1}^t,n, \ldots, n)$ in the associated $k$-graph. Choose a typical generating element $f_\eta$ of $E_\eta$ as in \eqref{eq:eigenspace_E_gamma},
\[f_\eta = \frac{1}{\mu[\eta e]} \chi_{[\eta e]} - \frac{1}{\mu[\eta e']} \chi_{[\eta e']},\]
where $(e,e')\in \text{ext}_1(\eta)$.
Write $\eta = \eta_n \eta_t$, where $d(\eta_n) = (n, \ldots, n)$ and $d(\eta_t) = (\overbrace{1, \ldots, 1}^t, 0, \ldots, 0)$.  Enumerate the paths in $r(\eta_t) \Lambda^{(1, \ldots, 1)}$ as 
\[ \{\lambda_0, \ldots, \lambda_m, \lambda_{m+1}, \ldots, \lambda_{m+\ell}, \lambda_{m+\ell+ 1}, \ldots, \lambda_{m+\ell + p}\}\]
where the paths $\lambda_i$ for $0 \leq i \leq m$ are the extensions of $\eta_t e$ and the paths $\lambda_i$ for $m+1 \leq i \leq m+\ell$ are the extensions of $\eta_t e'$.  Then 
\begin{equation}
f_\eta = \frac{1}{\mu[\eta e]} \sum_{i=0}^m \chi_{[\eta_n \lambda_i]} - \frac{1}{\mu[\eta e']} \sum_{i=m+1}^{m+\ell} \chi_{[\eta_n \lambda_i]}.\label{eq:f-eta-as-squares}
\end{equation}
 Using \eqref{eq:f_iv} and \eqref{eq:S-lambda-cylinder}, we obtatin
\[ S_{\eta_n} f^{i, r(\eta_t)} = \rho(\Lambda)^{(n/2, \ldots, n/2)} \left( \frac{1}{\mu[\lambda_0]} \chi_{[\eta_n \lambda_0]} - \frac{1}{\mu[\lambda_i]} \chi_{[\eta_n \lambda_i]} \right),\]
and hence
\begin{equation}
\label{eq:f-eta-in-W-n}
\begin{split}
S_{\eta_n} & \left( \sum_{i=1}^m \frac{-\mu[\lambda_i]}{\mu[\eta e]} f^{i, r(\eta_t)}  + \sum_{i=m+1}^{m+\ell} \frac{\mu[\lambda_i]}{\mu[\eta e']} f^{i, r(\eta_t)} \right) \\
& = \rho(\Lambda)^{(n/2, \ldots, n/2)}\left(  \frac{1}{\mu[\eta e]}\sum_{i=1}^m \chi_{[\eta_n \lambda_i]} - \frac{1}{\mu[\eta_n e']} \sum_{i=m+1}^{m+\ell} \chi_{[\eta_n \lambda_i]} \right. \\
& \qquad \qquad \qquad \left. + \frac{1}{\mu[\lambda_0]} { \chi_{[\eta_n\lambda_0]} } \left( \sum_{i=1}^m \frac{-\mu[\lambda_i]}{\mu[\eta e] } + \sum_{i=m+1}^{m+\ell} \frac{\mu[\lambda_i]}{\mu[\eta e'] } \right) \right)\\
&= \rho(\Lambda)^{(n/2, \ldots, n/2)} \left(  f_\eta + \frac{1}{\mu[\lambda_0]} \chi_{[\eta_n \lambda_0]} \left( \sum_{i=0}^m \frac{-\mu[\lambda_i]}{\mu[\eta e] } + \sum_{i=m+1}^{m+\ell} \frac{\mu[\lambda_i]}{\mu[\eta e'] } \right) \right).
\end{split}
\end{equation}
Since the paths $\lambda_i$, for $0 \leq i \leq m$, constitute the extensions of $\eta_t e$ with the same degree $(1,\ldots,1)$, we have $\sum_{i=0}^m \mu[\lambda_i] = \mu[\eta_t e]$.  Similarly, $\sum_{j=m+1}^{m+\ell} \mu[\lambda_j] = \mu[\eta_t e']$.  Moreover, 
\[ \frac{\mu[\eta_t e]}{\mu[\eta e]} = \rho(\Lambda)^{d(\eta e) - d(\eta_t e)}= \rho(\Lambda)^{d(\eta_n)} = \frac{\mu[\eta_t e']}{\mu[\eta e']}.\]
In other words, the coefficient of $\chi_{[\eta_n \lambda_0]}$ in Equation \eqref{eq:f-eta-in-W-n} is zero, and so $f_\eta \in \mathcal{W}_n$.

If our ``preferred path'' $\lambda_0$ is not an extension of either $e$ or $e'$, Equation \eqref{eq:f-eta-as-squares} and Equation \eqref{eq:f-eta-in-W-n} hold in a modified form without the zeroth term,  and we again have $f_\eta \in \mathcal W_n$.   In other words,
\[ E_\eta \subseteq \mathcal{W}_n \ \text{ whenever } |\eta| = n k + t.\]

To see that $\mathcal{W}_n= \bigoplus_{t=0}^{k-1} \bigoplus_{|\eta| = nk + t} E_\eta$, we again use a dimension argument.  
If $|\eta| = nk+t$, we know from \cite{julien-savinien} Theorem 4.3 that $\dim E_\eta = \#( s(\eta) \Lambda^{e_{t+1}}) -1$.  Since we have a bijection between 
\[ \bigcup_{|\eta| = nk + t} s(\eta) \Lambda^{e_{t+1}} \quad \text{ and } \quad \Lambda^{d(\eta) + e_{t+1}},\] 
\begin{align*}
\dim \left(  \bigoplus_{t=0}^{k-1} \bigoplus_{|\eta| = nk + t} E_\eta \right)& = \sum_{t=1}^k \#\Big( \Lambda^{(\overbrace{n+1, \ldots, n+1}^t, n, \ldots, n)}\Big) - \sum_{t=0}^{k-1} \#\Big( \Lambda^{(\overbrace{n+1, \ldots, n+1}^t, n, \ldots, n)}\Big) \\
&= \# (\Lambda^{(n+1, \ldots, n+1)}) - \# (\Lambda^{(n, \ldots, n)} )\\
& = \dim \mathcal{W}_n. 
\end{align*} 
\end{proof}

\begin{rmk}
\label{rmk:wavelets-eigenfcns-1graph}
Recall that a directed graph with adjacency matrix $A$ gives rise to both a stationary Bratteli diagram with adjacency matrix $A$, and a 1-graph -- namely, the category of its finite paths.  Moreover, for many 1-graphs the wavelets of \cite[Section 4]{FGKP} agree with the wavelets of \cite[Section 3]{marcolli-paolucci}.  (Marcolli and Paolucci only considered in \cite{marcolli-paolucci} strongly connected directed graphs whose adjacency matrix $A$ has entries from $\{0,1\}$; but for all such directed graphs, the wavelets of \cite[Section 4]{FGKP} agree with the wavelets of \cite[Section 3]{marcolli-paolucci}.) Thus, in this situation, Theorem \ref{thm:JS-wavelets} above implies that the  eigenspaces of the Laplace-Beltrami operators $\Delta_s$ associated to the stationary Bratteli diagram with adjacency matrix $A$, as in \cite{julien-savinien} Section 4, refine the graph wavelets from Section 3 of \cite{marcolli-paolucci}.
\end{rmk}

\subsection{Different constructions of wavelets}

 In this section, we describe two variations on the original wavelet decomposition of $L^2(\Lambda^\infty, M)$ from Section 4 of \cite{FGKP} which we discussed in the previous section.  The first of these variations has a more precise correspondence with the  eigenspaces $E_\gamma$ of the Laplace-Beltrami operators, and the second connects these eigenspaces with the $J$-shaped wavelets which were introduced in \cite{FGKP2} by four of the authors of the current paper.

To construct our first variation of the wavelet decomposition \eqref{eq:wavelet-decomp},
for each $v \in \Lambda^0$, equip $\C^{ v \Lambda^{e_1}}$ with the inner product 
\begin{equation}
 \langle \vec{v}, \vec{w} \rangle = \sum_{e \in v\Lambda^{e_1}} \overline{v_e} w_e M([e]).\label{eq:inner-prod-JS}
 \end{equation}
Then, let $\{(c_1)^{\ell, v}\}_\ell${, where $1 \leq \ell \leq \#(v\Lambda^{e_1}) -1$,} denote an orthonormal basis for the orthogonal complement of $\text{span}\{(1, 1, \ldots, 1)\}$ in $\C^{v\Lambda^{e_1}}$  with respect to this inner product.  
For $1 \leq \ell \leq \#(v\Lambda^{e_1}) -1$, define  
\[ f_1^{\ell, v}  = \sum_{e \in v\Lambda^{e_1}} (c_1)^{\ell, v}_e \chi_e.\]
Since $\chi_v = \sum_{e \in v\Lambda^{e_1}} \chi_e$, we have 
\begin{align*}
\langle \chi_w, f_1^{\ell, v} \rangle &= \delta_{v,w} \sum_{e \in v\Lambda^{e_1}} {(c_1)^{\ell, v}_e} M([e]) \\
&= 0,
\end{align*}
since each vector $(c_1)^{\ell, v}$ is orthogonal to $(1, 1, \ldots, 1)$ in the inner product \eqref{eq:inner-prod-JS}.  In other words, the functions $\{f_1^{\ell, v}\}_{\ell, v}$ are orthogonal to $\mathscr{V}_0$.

We claim that the functions $\{f_1^{\ell, v}\}_{\ell, v}$ form an orthonormal set.  To that end, 
 note that if $v \not= w$ then $\langle f_1^{\ell, v}, f_1^{\tilde \ell, w} \rangle = 0$, as there are no common extensions of $e \in v\Lambda^{e_1}$ and $\tilde e \in w\Lambda^{e_1}$.  

Now, observing that $\chi_e \chi_{\tilde e} = \delta_{e, \tilde e} \chi_e$, we compute 
\begin{align*}
\langle f_1^{\ell, v}, f_1^{\tilde \ell, v} \rangle &= \sum_{e \in v\Lambda^{e_1}} (c_1)^{\ell, v}_e (c_1)^{\tilde \ell, v}_e M([e]) \\
&= \delta_{\ell, \tilde \ell}
\end{align*} 
since we chose the vectors $\{(c_1)^{\ell, v}\}_\ell$ to be orthonormal with respect to the inner product \eqref{eq:inner-prod-JS}.

{We define our first family of ``mother wavelets'' by} 
\[\mathcal{W}_{0, 1} = \text{span} \{ f_1^{\ell, v}: v \in \Lambda^0, 1 \leq \ell \leq \#( v\Lambda^{e_1})-1 \}.\]

Now, for each edge $e \in \Lambda^{e_1}$, define an inner product on $\C^{s(e) \Lambda^{e_2}}$  by 
\begin{equation}
\label{eq:inner-prod-JS-2}
\langle \vec{v},  \vec{w} \rangle = \sum_{f \in s(e) \Lambda^{e_2}} \overline{v_f} w_f M([e f]).
\end{equation}
Let $\{(c_2)^{\ell, e}\}_{\ell}${, where $1 \leq \ell \leq \#(s(e) \Lambda^{e_2}) -1$,} be an orthonormal basis for the orthogonal complement of $\text{span} \{(1, \ldots, 1)\}$ with respect to this inner product.
For each edge $e \in \Lambda^{e_1}$ and each  $1 \leq \ell \leq \#(s(e) \Lambda^{e_2}) -1$, define 
\[ f_2^{\ell, e} := \sum_{f\in s(e) \Lambda^{e_2}} (c_2)^{\ell, e}_f \chi_{ef};\]
we claim that  $\{ f_2^{\ell, e}: e \in \Lambda^{e_1}, \ 1 \leq \ell \leq \#(s(e) \Lambda^{e_2})-1\}$ is an orthonormal set in $L^2(\Lambda^\infty, M)$ which is also orthogonal to $\mathscr{V}_0 \oplus \mathcal{W}_{0, 1}$.

To see this, we perform similar calculations to the ones above to show that 
\[ \langle f_2^{\ell, e}, f_1^{\ell, v} \rangle = 0\]
 (this follows because $\chi_e = \sum_{f \in s(e) \Lambda^{e_2}} \chi_{ef}$ and the vectors $(c_2)^{\ell, e}$ were chosen to be orthogonal to $(1, 1, \ldots, 1)$); that
\[ \langle f_2^{\ell, e}, f_2^{\tilde \ell, \tilde e} \rangle = \delta_{\ell, \tilde \ell} \delta_{e, \tilde e}\]
(again, this follows because the vectors $(c_2)^{\ell, e}$ were chosen to be an orthonormal set with respect to the inner product \eqref{eq:inner-prod-JS-2}); and that 
\[ \langle f_2^{\ell, \tilde e}, \chi_v \rangle =  \langle f^{\ell, \tilde e}_2, \sum_{{e \in v\Lambda^{e_1}}} \chi_e \rangle = \delta_{v, r(\tilde e)} \langle f^{\ell, \tilde e}_2, \chi_{\tilde e} \rangle = 0. \]

Thus, we can define {a second family $\mathcal{W}_{0,2}$ of mother wavelets:}
\[ \mathcal{W}_{0, 2} = \text{span} \{ f^{\ell, e}_2: e\in \Lambda^{e_1}, \ 1 \leq \ell \leq \#(s(e)\Lambda^{e_2})-1 \}.\]

In general, for any $1 \leq j \leq k$, and for any $\eta \in \Lambda^{{ e_1+  \cdots + e_{j-1}}}$, we define an inner product on $\C^{s(\eta) \Lambda^{e_j}}$  by 
\begin{equation}
\label{eq:inner-prod-JS-j}
\langle \vec{v}, \vec{w} \rangle = \sum_{e \in s(\eta) \Lambda^{e_j}} \overline{v_e} w_e M([\eta e]),
\end{equation}
pick an orthonormal basis $\{ (c_j)^{\ell, \eta}\}_\ell$ for the complement of $\text{span}\{ (1, \ldots, 1)\}$ in this inner product,
and we define 
\[ f_j^{\ell, \eta} = \sum_{e \in s(\eta) \Lambda^{e_j}} (c_j)^{\ell, \eta}_e \chi_{\eta e}.\]
Then, one checks (as above) that $\{f_j^{\ell, \eta}\}_{\ell, \eta}$ forms an orthonormal set {in $L^2(\Lambda^\infty, M)$}, so we set  
\[\mathcal{W}_{0, j} = \text{span} \{ f^{\ell, \eta}_j: \eta \in \Lambda^{{e_1 + \cdots +  e_{j-1}}}, \ 1 \leq \ell \leq  \# (s(\eta) \Lambda^{e_j}) -1\}.\]

To see that $f^{\ell, \eta}_j$ is orthogonal to $\mathcal{W}_{0, i}$ for $i < j$, pick a basis element $f^{\tilde \ell, \nu}_i $ of $ \mathcal{W}_{0, i}$  with $\nu\in \Lambda^{e_1+\ldots +e_{i-1}}$ and write 
\[ f^{\tilde \ell, \nu}_i = \sum_{e \in s(\nu) \Lambda^{e_i}} (c_i)^{\tilde \ell, \nu}_e \chi_{\nu e} =  \sum_{e \in s(\nu) \Lambda^{e_i}} \sum_{\beta \in s(e)  \Lambda^{e_{i+1}+ \cdots + e_j}} (c_i)^{\tilde \ell, \nu}_e \chi_{\nu e \beta}.\]
This  sum will collapse when we take the inner product  of $f_i^{\tilde{\ell},\nu}$ with $f^{\ell, \eta}_j$:   there is at most one choice of $e$ such that $\eta$ extends $\nu e$.  Moreover, if we write each $\beta$ above as $\tilde \beta f$ where $d(f) = e_j$, then there is also at most one choice of $\tilde \beta$ so that $\nu e \tilde \beta = \eta$.  Consequently, when we take the inner product with $f^{\ell, \eta}_j$, the sum above reduces to a sum over elements $ f \in s(\eta) \Lambda^{e_j}$.  To be precise, 
\begin{align*}
\langle f^{\ell, \eta}_j, f^{\tilde{\ell}, \nu}_i \rangle &= \delta_{\eta, \nu e \tilde{\beta}} \sum_{f \in s(\eta) \Lambda^{e_j}} \overline{(c_j)^{\ell, \eta}_f} (c_i)^{\tilde \ell, \nu}_e M([\eta f])  \\
 &= \delta_{\eta, \nu e \tilde{\beta}} (c_i)^{\tilde \ell, \nu}_e \sum_{f\in s(\eta) \Lambda^{e_j}} \overline{(c_j)_f^{\ell, \eta}} M([\eta f]) \\
 &= 0,
\end{align*}
because of our choice of $(c_j)^{\ell, \eta}$.  Thus $\mathcal{W}_{0,j}$ is orthogonal to $\mathscr{V}_0\oplus \mathcal{W}_{0,1}\oplus \dots \oplus \mathcal{W}_{0,j-1}$.

Now, for any $q \in \N$ and any $1 \leq j \leq k$, we can define 
\begin{equation}\label{eq:subspace-refine-wavelet}
\mathcal{W}_{q, j} = \text{span} \{ S_\lambda f^{\ell, \eta}_j: d(\lambda) = (q, q, \ldots, q)\}.
\end{equation}
As indicated above, we think of the spaces $\mathcal{W}_{0,j}$ as the ``mother wavelets'' and the operators $S_\lambda$ where $d(\lambda) = (q, \ldots, q)$ as the ``scaling and translation'' operators we apply to our mother wavelets to get an orthonormal basis for $L^2(\Lambda^\infty, M)$. {In fact, these ``mother wavelets'' $\mathcal W_{0,j}$ are a refinement of the original mother wavelet family $\mathcal W_0$ from \cite{FGKP}.}  The following Theorem makes this precise.
\begin{thm}
\label{thm:refined-JS-wavelets}
Let $\Lambda$ be a finite, strongly connected $k$-graph with adjacency matrices $A_i$, and let $\{S_\lambda: \lambda \in \Lambda\} \subseteq B(L^2(\Lambda^\infty, M))$ be the operators of Equation \eqref{eq:S-lambda-cylinder}.  For any $q \in \N, 1 \leq j \leq k$,  the set 
$ \{ S_\lambda f^{\ell, \eta}_j: d(\lambda) = (q, q, \ldots, q)\} \subseteq L^2(\Lambda^\infty, M)$ is  orthonormal.  Setting 
\[ \mathcal{W}_{q, j} = \text{span} \{ S_\lambda f^{\ell, \eta}_j: d(\lambda) = (q, q, \ldots, q) \},\]
we have $\mathcal{W}_{q, j} \perp \mathcal{W}_{q', j'}$ whenever $qk + j \not= q' k + j'$.  Moreover, 
\[ L^2(\Lambda^\infty, M) = \mathscr{V}_0 \oplus \bigoplus_{q\in \N} \bigoplus_{1 \leq j \leq k} \mathcal{W}_{q, j}.\]

If {we also assume that $A := A_1 \cdots A_k$ is irreducible and Equation \eqref{eq:weight-diam} holds for the  ultrametric Cantor sets ($X_{\mathcal B_\Lambda}, d_\delta)$,} then denoting by $E_\gamma$ the eigenspaces of the Laplace-Beltrami operators as in Theorem 4.3 of \cite{julien-savinien}, also described in Equation \eqref{eq:eigenspace_E_gamma}, we have  
\[ \mathcal{W}_{q, j} = \text{span } \{ E_\gamma: |\gamma| = qk + j-1\}.\]
{It follows that $\mathcal{W}_q = \bigoplus_{j=1}^k \mathcal{W}_{q, j}$.}
\end{thm}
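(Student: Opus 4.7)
The plan is to refine the sequence of square-cylinder spaces $\mathscr{V}_q$ by introducing intermediate ``rainbow'' spaces
\[
\mathscr{V}_{q,j} := \text{span}\{\chi_{[\mu]} : d(\mu) = (q,\ldots,q) + e_1 + \cdots + e_j\}, \qquad 0 \leq j \leq k,
\]
so that $\mathscr{V}_{q,0} = \mathscr{V}_q$, $\mathscr{V}_{q,k} = \mathscr{V}_{q+1}$, and these fit into a nested chain $\mathscr{V}_0 \subseteq \mathscr{V}_{0,1} \subseteq \cdots \subseteq \mathscr{V}_{0,k} = \mathscr{V}_1 \subseteq \mathscr{V}_{1,1} \subseteq \cdots$. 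The key reduction is to identify $\mathcal{W}_{q,j} = \mathscr{V}_{q,j} \cap \mathscr{V}_{q,j-1}^\perp$; from this, the orthonormality of the spanning family, the mutual orthogonality of distinct $\mathcal{W}_{q,j}$'s, completeness of the decomposition, and the identity $\mathcal{W}_q = \bigoplus_{j=1}^k \mathcal{W}_{q,j}$ will all follow by telescoping and a dimension count.

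For the orthonormality of $\{S_\lambda f_j^{\ell,\eta} : d(\lambda) = (q,\ldots,q)\}$: a direct calculation using \eqref{eq:S-lambda-cylinder} and the identity $M([\lambda\alpha]) = \rho(\Lambda)^{-d(\lambda)} M([\alpha])$ (valid when $r(\alpha) = s(\lambda)$) shows $S_\lambda$ acts as an $L^2$-isometry on functions supported in $[s(\lambda)]$. The vectors $f_j^{\ell,\eta}$ are themselves orthonormal in $L^2(\Lambda^\infty, M)$ by the orthonormality of $(c_j)^{\ell,\eta}$ for \eqref{eq:inner-prod-JS-j} (together with disjointness of supports for distinct $\eta$), and $S_\lambda f_j^{\ell,\eta}$ and $S_{\lambda'} f_j^{\ell',\eta'}$ with $\lambda \neq \lambda'$ of the same degree have disjoint supports.

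For the orthogonality $\mathcal{W}_{q,j} \perp \mathcal{W}_{q',j'}$ when $qk+j \neq q'k+j'$: the inclusion $\mathcal{W}_{q,j} \subseteq \mathscr{V}_{q,j}$ is clear from \eqref{eq:S-lambda-cylinder}, and to see $\mathcal{W}_{q,j} \perp \mathscr{V}_{q,j-1}$, I pair a generator $S_\lambda f_j^{\ell,\eta}$ with $\chi_{[\nu]}$ for $d(\nu) = (q,\ldots,q)+e_1+\cdots+e_{j-1}$: the factorization property forces $\nu = \lambda\eta$ for a nonzero contribution, and then the remaining sum is proportional to $\sum_f \overline{(c_j)^{\ell,\eta}_f}\, M([\eta f])$, which vanishes because $(c_j)^{\ell,\eta}$ is orthogonal to the constant vector in the inner product \eqref{eq:inner-prod-JS-j}. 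A componentwise multi-degree comparison then yields $\mathscr{V}_{q,j} \subseteq \mathscr{V}_{q',j'-1}$ whenever $qk+j < q'k+j'$, giving the asserted orthogonality. A dimension count using the factorization property,
\[
\dim \mathcal{W}_{q,j} = \#\Lambda^{(q,\ldots,q)+e_1+\cdots+e_j} - \#\Lambda^{(q,\ldots,q)+e_1+\cdots+e_{j-1}} = \dim \mathscr{V}_{q,j} - \dim \mathscr{V}_{q,j-1},
\]
then forces $\mathcal{W}_{q,j} = \mathscr{V}_{q,j} \cap \mathscr{V}_{q,j-1}^\perp$ and telescopes to $\mathscr{V}_{q+1} = \mathscr{V}_q \oplus \bigoplus_{j=1}^k \mathcal{W}_{q,j}$. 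Density of $\bigcup_q \mathscr{V}_q$ in $L^2(\Lambda^\infty, M)$ (Lemma 4.1 of \cite{FGKP}) delivers the full orthogonal decomposition, together with the identity $\mathcal{W}_q = \bigoplus_{j=1}^k \mathcal{W}_{q,j}$.

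Under the extra hypotheses, Corollary \ref{cor:dixmier-aHLRS} gives $\mu_\delta = M$. A generator $f_\gamma = M[\gamma e]^{-1}\chi_{[\gamma e]} - M[\gamma e']^{-1}\chi_{[\gamma e']}$ of $E_\gamma$ with $|\gamma| = qk + j - 1$ lies in $\mathscr{V}_{q,j}$ since $d(\gamma e) = (q,\ldots,q)+e_1+\cdots+e_j$. Pairing $f_\gamma$ against $\chi_{[\mu]}$ with $d(\mu) = d(\gamma)$, the factorization property forces $\mu = \gamma$ for a nonzero contribution, and the two coefficients then cancel, placing $E_\gamma \subseteq \mathcal{W}_{q,j}$. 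The matching dimension count $\sum_{|\gamma|=qk+j-1}\dim E_\gamma = \sum_\gamma (\#(s(\gamma)\Lambda^{e_j}) - 1) = \dim \mathcal{W}_{q,j}$ closes the argument. The main obstacle I anticipate is the multi-indexed bookkeeping: carefully identifying which cylinder sets intersect nontrivially across the many varying degree vectors via the factorization property, and tracking the componentwise ordering of multi-degrees needed to nest the rainbow spaces across different values of $q$.
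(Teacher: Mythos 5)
Your proposal is correct and follows essentially the same route as the paper: direct inner-product computations for orthonormality and for orthogonality to the ``previous'' span, a telescoping dimension count to obtain completeness and the identification of $\mathcal{W}_{q,j}$ as a relative orthogonal complement, and the observation $\langle f_\gamma, \chi_{[\gamma]}\rangle = 0$ plus a dimension count for the eigenspace statement. Your intermediate rainbow spaces $\mathscr{V}_{q,j}$ are exactly the spans of characteristic functions of length-$(qk+j)$ paths in the Bratteli diagram, which the paper handles implicitly through the counts $n_{qk+j}$, so the two arguments coincide in substance.
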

\begin{proof}
The orthonormality checks  proceed analogously to the above computations of orthonormality for the spaces $\mathcal{W}_{0,j}$ (also see the proof of \cite{FGKP} Theorem 4.2).  To see that every element in $L^2(\Lambda^\infty, M)$ can indeed be written as a linear combination of elements of $\mathscr{V}_0$ and $\mathcal{W}_{q,j}$, we again use a dimension counting argument.  Note that for every path $\eta$ in the Bratteli diagram of length at most $qk + j$, 
\[\chi_{[\eta]} \in \text{span} \{\chi_{[\nu]}: |\nu| = qk+j\}\]
and moreover, $\{\chi_{[\nu]}: |\nu| = qk + j\}$ is an orthogonal set in $L^2(\Lambda^\infty, M)$.  Denote by $n_{qk+j} := \#( \{ \nu: |\nu| = qk+j\} )$.

On the other hand, we note that $\{ f^{\ell, \eta}_j: d(\eta) = e_1 + \cdots + e_{j-1}, 1 \leq \ell \leq \# (s(\eta) \Lambda^{e_j})-1\}$ contains 
\[\sum_{d(\eta)=e_1+\dots+e_{j-1}}(\#( s(\eta)\Lambda^{e_j}) -1)=n_j - n_{j-1}\]
elements.  Similarly, 
$\{S_\lambda f^{\ell, \eta}_j: d(\lambda) = (q, \ldots, q), d(\eta) = e_1 + \cdots + e_{j-1}, 1 \leq \ell \leq \#(s(\eta) \Lambda^{e_j})-1 \}$ contains 
\[\sum_{d(\lambda)=(q,\dots,q), d(\eta)=e_1+\cdots +e_{j-1}, s(\lambda)=r(\eta)}(\#( s(\eta)\Lambda^{e_j}) -1) =n_{qk+j}-n_{qk+j-1} \]
elements -- 
in other words, 
\[ \# \Big( S_\lambda f^{\ell, \eta}_j: d(\lambda) = (q, \ldots, q), d(\eta) = e_1 + \cdots +  e_{j-1}, 1 \leq \ell \leq \#(s(\eta) \Lambda^{e_j})-1 \Big) = n_{qk + j} - n_{qk+j-1}.\]
Thus, there are 
\[ n_0 + (n_1 - n_0) + (n_2 - n_1) + \cdots + (n_{qk + j} - n_{qk+j-1}) = n_{qk+j}\]
 elements in an orthogonal basis of $\mathscr{V}_0 \oplus \bigoplus \{\mathcal{W}_{r, i}: rk + i \leq qk + j\}$.  Since all of these basis elements are linear combinations of characteristic functions $\chi_{[\nu]}$ with $|\nu| \leq qk +j$, we have 
\[ \mathscr{V}_0 \oplus \bigoplus \{ \mathcal{W}_{r,i}: rk+i \leq qk+ j \} \subseteq \text{span} \{ \chi_{[\nu]}: |\nu| = qk+j\} .\]
Since both of these spaces have dimension $n_{qk+j}$, they must agree.  In particular,
 any characteristic function $\chi_{[\eta]}$, and hence any function in  $L^2(\Lambda^\infty, M)$, lies in $ \mathcal{V}_0 \oplus \bigoplus_{q=0}^\infty \bigoplus_{j=1}^k \mathcal{W}_{q,j}$ as claimed.

For the  assertion relating the eigenspaces $E_\gamma$ of the Laplace--Beltrami operator with the spaces $\mathcal{W}_{q,j}$, where $|\gamma| = qk + j-1$, recall that a spanning set of $E_\gamma$ is given by the functions $f_{\gamma}$, where \[f_\gamma = \frac{1}{M([\gamma e])} \chi_{[\gamma e]} - \frac{1}{M([\gamma e'])} \chi_{[\gamma e']}\]
for edges $e, e' \in \text{ext}_1(\gamma)$, and $\langle f_{\gamma}, \chi_{[\gamma]} \rangle = \frac{1}{M([\gamma e])} M([\gamma e]) - \frac{1}{M([\gamma e'])} M([\gamma e']) = 0.$
{
Since $\chi_{[\gamma]} = \sum_{e \in s(\gamma) \Lambda^{e_j}} \chi_{[\gamma e]}$, our choice of the inner product \eqref{eq:inner-prod-JS-j} which we used to define the functions $f^{\ell, \gamma}_j$ spanning $\mathcal W_{0, j}$ means that 
\[ \langle f_\gamma, \chi_{[\gamma]} \rangle = 0 \ \forall \ (e, e') \in \text{ext}_1(\gamma), \text{ and consequently } f_\gamma \in \mathcal{W}_{0,j}.\]}
Thus, $E_\gamma \subseteq \mathcal{W}_{q, j}$.  Again, dimension considerations (as in the proof of Theorem \ref{thm:JS-wavelets} above, or \cite{julien-savinien} Theorem 4.3) tell us that since  $\dim E_\gamma = \# (s(\gamma) \Lambda^{e_j}) -1$, 
\[ \dim \mathcal{W}_{q, j} = n_{qk +j} - n_{qk + j -1} = \sum_{\gamma: |\gamma| = qk+j-1} \dim E_\gamma. \qquad \qquad \]
{The final assertion, that $\mathcal W_q = \bigoplus_{j=1}^k \mathcal{W}_{q,j}$, now follows from the equality $\mathcal W_q = \bigoplus_{t=0}^{k-1} \{ E_\gamma : | \gamma| = qk +t\}$ of Theorem \ref{thm:JS-wavelets}}.
\end{proof}

We now discuss our second variation on the original wavelet decomposition of $L^2(\Lambda^\infty, M)$.
In Theorem 5.2 of \cite{FGKP2}, four of the authors of the current paper showed how to modify the construction of the wavelets described in \eqref{eq:wavelet-decomp}.  Fix $J = (J_1, \ldots, J_k) \in \N^k$ with $J_i >0 $  for all $1 \leq i \leq k$. For each integer $j \geq 1$ we set 
\[ \mathcal{W}_{0}^J = \text{span} \{ \chi_{[\lambda]}: d(\lambda) = J \} \cap \mathscr{V}_0^\perp, \qquad \mathcal{W}_j^J = \{ S_\lambda f: f \in \mathcal{W}_0^J, d(\lambda) = j J\},\]
then we have 
\begin{equation}L^2(\Lambda^\infty, M) = \mathscr{V}_0 \oplus \bigoplus_{j=0}^\infty \mathcal{W}_j^J.\label{eq:J-wavelets}
\end{equation}
This wavelet decomposition can also be shown to agree with the eigenspaces of a Laplace-type operator associated to a spectral triple for the infinite path space of a Bratteli diagram, as in the previous sections.  Since the proofs exactly parallel our work in Sections \ref{sec:zeta-regular} and \ref{sec-wavelets-as-eigenfunctions} of the current paper, we merely sketch the construction here.

Given a finite, strongly connected $k$-graph $\Lambda$ and $J \in \N^k$ with $J_i > 0$ for all $i$, we construct an associated Bratteli diagram $\mathcal{B}_\Lambda^J = ((\mathcal{V}_\Lambda^J)^n, (\mathcal{E}_\Lambda^J)^n)$ with $r((\mathcal{E}_\Lambda^J)^n) = (\mathcal{V}_\Lambda^J)^{n-1}$, by setting
\[ (\mathcal{V}_\Lambda^J)^n = \Lambda^0 \;\;\text{for all $n \in \N$}\]
and having $J_1$ edges of color 1, then $J_2$ edges of color 2, and so on; after $J_k$ edges of color $k$, we repeat the $J_1$ edges of color 1 and continue in ``$J$-scaled rainbow order'' ad infinitum.  More precisely, given $n \in \N$, write 
\[n = q (J_1 + \cdots + J_k) + n_1  + n_2  + \cdots + n_k ,\]
 with $n_i \leq J_i$ for all $i$ and $n_i = 0$ unless $n_{i-1} = J_{i-1}$.  Let $t$ be the largest index such that $n_t \not= 0$ (or $t=k$ if $n_i =0$ for all $i$); then 
\[  (\mathcal{E}_\Lambda^J)^n = \Lambda^{e_t}.\]
As in Proposition \ref{pr:kgraph-bratteli-inf-path-spaces}, we have $\Lambda^\infty \cong X_{\mathcal{B}_\Lambda^J}$.

Assuming that Hypothesis \ref{hypoth} holds, one can show  as in Corollary~\ref{cor:ultrametric-Cantor}  that $X_{\mathcal{B}_\Lambda^J}$ is an ultrametric Cantor set with the metric $d_{\delta}$ induced by the weight $w_\delta$ of Equation \eqref{eq:w-delta}.  Thus, we have a Pearson-Bellissard type spectral triple for $X_{\mathcal{B}_\Lambda^J} \cong \Lambda^\infty$,  whose associated Dixmier trace  agrees with the measure $M$ given in Equation~\eqref{eq:kgraph_Measure} on $\Lambda^\infty$ if $A_1^{J_1} \cdots A_k^{J_k}$ is irreducible, as in Corollary \ref{cor:dixmier-aHLRS}.  Finally, the analysis employed in Theorem \ref{thm:JS-wavelets} will show that the eigenspaces $\{E_\lambda: \lambda \in F \mathcal{B}_\Lambda^J \}$ of the Laplace-type operator of the spectral triple  agree with the wavelet decomposition \eqref{eq:J-wavelets}:
\[ \mathcal{W}_j^J = \bigoplus \{ E_\lambda: j (J_1 + \cdots + J_k) \leq |\lambda| \leq (j+1) (J_1 + \cdots + J_k) -1\}.\]
Similarly, we have an analogue of the refined wavelet decomposition of Theorem \ref{thm:JS-wavelets}: setting $\mathcal{W}_{0, 1}^J = \text{span} \{ \chi_{[\lambda]}: |\lambda| = 1 \} \cap \mathscr{V}_0^\perp$ and inductively defining 
\[\mathcal{W}_{0, \ell}^J = \text{span} \{ \chi_{[\lambda]}: |\lambda| = \ell\} \cap \left(\mathcal{W}_{0, \ell-1}^J\right)^\perp \quad \text{ for } 1 < \ell \leq J_1 + \cdots+ J_k,\]
\[ \mathcal{W}_{q, \ell }^J = \text{span} \{ S_\lambda f: f \in \mathcal{W}_{0,\ell}^J, |\lambda| = q (J_1+ \cdots + J_k) \},\]
we have $L^2(\Lambda^\infty, M) = \mathscr{V}_0 \bigoplus_{q\in \N} \bigoplus_{\ell =1}^{J_1 + \cdots + J_k} \mathcal{W}_{q, \ell}^J$ and 
\[ \mathcal{W}_{q, \ell}^J = \text{span} \{E_\gamma: |\gamma| = q (J_1 + \cdots + J_k) + \ell-1 \}.\]

\begin{rmk} 
Given a finite, strongly connected $k$-graph $\Lambda$, in addition to the wavelets and eigenspaces associated to $\Lambda$ as described above, we can also construct (in a variety of ways) a directed graph $E^J_\Lambda$, such that $(E^J_\Lambda)^\infty \cong \Lambda^\infty$.  Then, applying Theorem \ref{thm:JS-wavelets} to these directed graphs $E^J_\Lambda$, we obtain (for each $J$) a pair of compatible orthogonal decompositions of $L^2((E^J_\Lambda)^\infty, M_J) \cong L^2(\Lambda^\infty, M)$ -- the wavelet decomposition of \cite{marcolli-paolucci} and the eigenspaces of the Laplace-Beltrami operators $\Delta_s$.
To our knowledge, such a realization of the infinite path space of a $k$-graph as the infinite path space of a directed graph has not previously appeared in the $k$-graph literature, although the ``collapsing'' or ``telescoping'' procedure we use below is commonly employed when studying Bratteli diagrams.

To be precise, fix $J \in \N^k$ with $J_i > 0$ for all $i$.  If $\Lambda$ is a  finite, strongly connected $k$-graph with adjacency matrices $A_1, \ldots, A_k$, let $A^J = A_1^{J_1} A_2^{J_2} \cdots A_k^{J_k}$.  Let $E_\Lambda^J$ be the ``collapsed'' 1-graph with adjacency matrix $A^J$. 
Note that $\Lambda$ and $E_\Lambda^J$ have the same vertex set, and that the edges in $E_\Lambda^J$ are in bijection with the  morphisms of degree $J$ in $\Lambda$. 

We first observe that $(E_\Lambda^J)^\infty \cong \Lambda^\infty$.  By Remark 2.2 of \cite{kp}, any infinite path $y \in \Lambda^\infty$ is completely determined by the sequence of finite paths $\{ y(n \cdot J, (n+1) \cdot J)\}_{n \in \N}$. Since each of these finite paths $y(n \cdot J, (n+1) \cdot J)$ has degree $J$, it corresponds to a unique edge $e_{y, n} \in E_\Lambda^J$. 
Thus, just as in the proof of Proposition \ref{pr:kgraph-bratteli-inf-path-spaces} above, $y \mapsto (e_{y, n})_{n \in \N}$ is a bijection $\Lambda^\infty \to (E_\Lambda^J)^\infty$.

To see that this bijection is a homeomorphism, recall from the proof of Lemma 5.1 of \cite{FGKP2} that
\begin{equation}
\label{eq:J-topology-basis}\{ [\lambda]: \lambda \in \Lambda, \ d(\lambda) = n \cdot J, \ n \in \N\}
\end{equation}
is a basis for the topology on $\Lambda^\infty$. Moreover, the bijection between edges in $E_\Lambda^J$ and morphisms of degree $J$ in $\Lambda$ implies that the bijection $y \mapsto (e_{y, n})_n$ of the previous paragraph gives rise to a bijection between the elements $[\lambda]$ of the basis \eqref{eq:J-topology-basis} and the  cylinder sets in $(E_\Lambda^J)^\infty$.  In other words, $\Lambda^\infty \cong (E_\Lambda^J)^\infty$ as claimed.

 Denote by $M_J$ the measure of Equation \eqref{eq:kgraph_Measure} on the infinite path space $(E_\Lambda^J)^\infty$; that is, $M_J([e_1 \cdots e_n]) = (\rho(\Lambda)^{-J})^n x^\Lambda_{s(e_n)}.$
If $\lambda_i \in \Lambda$ is the path of degree $J$ which corresponds to the edge $e_i$, then \[M([\lambda_1 \cdots \lambda_n]) =M_J([e_1 \cdots e_n]),\]
so the homeomorphism $\Lambda^\infty \cong (E_\Lambda^J)^\infty$ induces an isomorphism $L^2(\Lambda^\infty, M) \cong L^2((E_\Lambda^J)^\infty, M_J)$ of Hilbert spaces.

Whenever $A^J$ is irreducible, then, Section 3 of \cite{marcolli-paolucci} tells us how to construct a wavelet decomposition of $L^2((E_\Lambda^J)^\infty, M_J)$, which by Theorem \ref{thm:JS-wavelets} above is compatible with the eigenspaces of the Laplace-Beltrami operators $\Delta_s$ associated to the stationary Bratteli diagram with adjacency matrix $A^J$.  
\end{rmk}

\section{Consani-Marcolli spectral triple for strongly connected higher-rank graphs}
\label{sec-Consani-Marcolli-spectral-triples-for-k-graphs}

In Section 6 of \cite{consani-marcolli}, Consani and Marcolli construct a spectral triple for the Cuntz-Krieger algebra $\mathcal{O}_A$ associated to a matrix $A \in M_n(\N)$.
Recall from \cite{kprr} that if $E$ is the directed graph with adjacency matrix $A$, then $\mathcal{O}_A \cong C^*(E)$.
In this section, we generalize the construction of Consani and Marcolli to build spectral triples for higher-rank graph $C^*$-algebras $C^*(\Lambda)$.
For these spectral triples (described in Theorem~\ref{thm-Consani-Marcolli-spectral-triples-k-graphs} below), it is shown in Theorem \ref{thm:CM-Dirac-wavelets} that the eigenspaces of the Dirac operator are compatible with the wavelet decomposition from \cite{FGKP}. We also discuss in Remark \ref{rmk:CM-rectangle-wavelets} at the end of the section how to modify the construction of the spectral triple to make the eigenspaces of the Dirac operator compatible with the $J$-shape wavelets of \cite{FGKP2}.

 In our construction of the spectral triples of Theorem~\ref{thm-Consani-Marcolli-spectral-triples-k-graphs}, we were also inspired by several other spectral triples associated to Bratteli diagrams or fractal sets: namely, Christensen and Ivan's spectral triples \cite{christensen-ivan-AF} for AF algebras, Julien and Putnam's work \cite{JP} on spectral triples for subshifts, and the spectral triples for certain hyperbolic dynamical systems studied by Deeley, Goffeng, Mesland and Whittaker~\cite{DGMW}.

We begin by reviewing the definition of the $C^*$-algebra $C^*(\Lambda)$ of a higher-rank graph $\Lambda$.

\subsection{The $C^*$-algebra of a higher-rank graph}\label{subsection:k-graph}
Let $\Lambda$ be a finite $k$-graph with no sources. Kumjian and Pask defined $C^*(\Lambda)$ in \cite{kp} to be the universal $C^*$-algebra generated by a collection of partial isometries $\{s_\lambda\}_{\lambda \in \Lambda}$ satisfying the Cuntz-Krieger conditions:
\begin{itemize}
\item[(CK1)] $\{ s_v: v \in \Lambda^0\}$ is a family of mutually orthogonal projections;
\item[(CK2)] Whenever $s(\lambda) = r(\eta)$ we have $s_\lambda s_\eta = s_{\lambda \eta}$;
\item[(CK3)] For any $\lambda \in \Lambda, \ s_\lambda^* s_\lambda = s_{s(\lambda)}$;
\item[(CK4)] For all $v \in \Lambda^0$ and all $n \in \N^k$, $\sum_{\lambda \in v\Lambda^n} s_\lambda s_\lambda^* = s_v$.
\end{itemize}
Given $m_1, m_2 \in \N^k$, we write $m_1 \vee m_2$ for the coordinate-wise maximum of $m_1$ and $m_2$.  For a pair $(\lambda, \eta) \in \Lambda \times \Lambda$ with $r(\lambda) = r(\eta)$, we define 
\[ \Lambda^{min}(\lambda, \eta) = \{ ( \alpha, \beta) : \lambda \alpha = \eta \beta \text{ and } d(\lambda \alpha) = d(\lambda) \vee d(\eta)\}.\]
If $\Lambda$ is a 1-graph, then $|\Lambda^{min}(\lambda, \eta)| \in \{ 0, 1\};$ however, this need not true for higher-rank graphs with $k > 1$ (c.f.~the 2-graphs of \cite{LLNSW}  Example 7.7).

Condition (CK4) implies that for any $\lambda, \eta \in \Lambda$ we have 
\[ s_\lambda^* s_\eta = \sum_{(\alpha, \beta) \in \Lambda^{min}(\lambda, \eta)} s_\alpha s_\beta^*,\]
where we interpret empty sums as zero.  Consequently, $C^*(\Lambda)$ is the closed linear span of $\{ s_\lambda s_\eta^*\}_{\lambda, \eta \in \Lambda}$.  In what follows, denote by $\mathcal{A}_\Lambda$ the dense $*$-subalgebra of $C^*(\Lambda)$ spanned by $\{ s_\lambda s_\eta^*\}_{\lambda, \eta \in \Lambda}$.

\subsection{The Consani-Marcolli $k$-graph spectral triples}
  The Dirac operator and spectral triple defined in this section generalize those of \cite{consani-marcolli} Section 6.2 from the $C^*$-algebras of directed graphs to those of higher-rank graphs.
 
\begin{defn} Let $\Lambda$ be a finite, strongly connected $k$-graph.  Define $\mathcal{R}_{-1} \subset L^2(\Lambda^\infty, M)$ to be the linear subspace of constant functions on $\Lambda^\infty$. 
 For $s\in\N$, define  $\mathcal{R}_s \subset L^2(\Lambda^\infty, M)$ by
 \[
 \mathcal{R}_s=\text{span} \left\{ \chi_{[\eta]} : \ \eta \in \Lambda, \ \sup \{ d(\eta)_i: 1 \leq i \leq k\} \leq s \right\},
 \]
 where $d(\eta) = (d(\eta)_1, \ldots, d(\eta)_k) \in \N^k$.
 
 Let $\Xi_s$ be the orthogonal projection in $L^2(\Lambda^\infty, M)$ onto the subspace $\mathcal{R}_s$.
       For a pair $(s,r)\in \N\times ( \N \cup \{-1\}) $ with $s>r$, let
        \[
        \widehat{\Xi}_{s,r} =\Xi_s  - \Xi_{r}.
        \]
        Since $\mathcal{R}_r \subset \mathcal{R}_s$, $\widehat{\Xi}_{s,r}$ is the orthogonal projection onto the subspace $\mathcal{R}_s \cap ({\mathcal{R}_{r} })^{\perp}$.

      Given an increasing sequence $\alpha= \{ \alpha_q\}_{q\in \N}$  of positive real numbers with $\lim_{q \to \infty} \alpha_q = \infty $, we define an operator $D$ on $L^2(\Lambda^\infty, M)$ by
      
\begin{equation} \label{eq:Dirac}        
D:=\sum_{q\in \N} \alpha_q\;\widehat{\Xi}_{q, q-1}.
\end{equation}

\end{defn}
 
Let $\pi$ be the representation of $C^*(\Lambda)$ on $L^2(\Lambda^\infty, M)$ described in  Proposition 3.4 and Theorem 3.5 of \cite{FGKP} (also see Equations \eqref{eq:S_lambda} and \eqref{eq:S-lambda-star} below). The aim of this section is to prove that this representation makes $(\mathcal{A}_\Lambda, L^2(\Lambda^\infty, M), D)$ into a spectral triple, and to describe the eigenspaces of $D$ in terms of the wavelet decomposition of \cite{FGKP}.  To prove these statements, we begin by establishing that $D$ is a self-adjoint unbounded operator with compact resolvent. 

\begin{prop}\label{pr:dirac-self-adjoint}
The operator $D$ on $L^2(\Lambda^\infty, M)$ of Equation \eqref{eq:Dirac}  is unbounded and self-adjoint.
  \end{prop}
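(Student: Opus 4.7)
The plan is to identify $D$ as a diagonal operator on an orthogonal decomposition of $L^2(\Lambda^\infty,M)$ indexed by $q$, with real eigenvalues $\alpha_q$ on the $q$th summand. Self-adjointness and unboundedness then follow from standard facts about diagonalizable unbounded operators with unbounded spectrum.

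First I would note that the inclusions $\mathcal{R}_{-1} \subset \mathcal{R}_0 \subset \mathcal{R}_1 \subset \cdots$ imply that the closed subspaces
\[\mathcal{H}_q := \mathcal{R}_q \cap \mathcal{R}_{q-1}^\perp \quad (q\geq 0), \qquad \mathcal{H}_{-1} := \mathcal{R}_{-1}\]
are mutually orthogonal, with $\widehat{\Xi}_{q,q-1}$ the orthogonal projection onto $\mathcal{H}_q$. Next I would verify the Hilbert space decomposition
\[L^2(\Lambda^\infty,M) = \mathcal{H}_{-1} \oplus \bigoplus_{q\geq 0}\mathcal{H}_q.\]
For this it suffices to see that $\bigcup_{q\geq -1} \mathcal{R}_q$ is dense: by Lemma~4.1 of \cite{FGKP}, the characteristic functions $\chi_{[\lambda]}$ with $d(\lambda) = (n,\ldots,n)$ densely span $L^2(\Lambda^\infty,M)$, and any such $\chi_{[\lambda]}$ belongs to $\mathcal{R}_n$.

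With this decomposition in hand, I would define the natural domain
\[\mathrm{Dom}(D) := \Bigl\{ f \in L^2(\Lambda^\infty,M) \,:\, \sum_{q\geq 0}\alpha_q^{\,2}\,\|\widehat{\Xi}_{q,q-1}f\|^2 < \infty \Bigr\},\]
which is dense since it contains every element of the dense subspace $\bigcup_q\mathcal{R}_q$ (any such element has only finitely many nonzero components in the orthogonal decomposition above). On $\mathcal{H}_{-1}$ the operator $D$ acts as $0$, and on each $\mathcal{H}_q$ (with $q\geq 0$) it acts as multiplication by the real scalar $\alpha_q$. Self-adjointness on $\mathrm{Dom}(D)$ is then the standard statement that a densely defined diagonal operator with real eigenvalues on a Hilbert orthogonal direct sum decomposition is self-adjoint on its maximal domain (see, e.g., Reed--Simon, Theorem VIII.3--4).

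For unboundedness it suffices to produce nonzero vectors in $\mathcal{H}_q$ for arbitrarily large $q$. The strong connectedness of $\Lambda$ together with Hypothesis~\ref{hypoth} (ensuring $\rho_i > 1$ for all $i$) implies that $\#\Lambda^{(n,\ldots,n)}$ grows without bound in $n$, so the inclusions $\mathcal{R}_{q-1}\subsetneq\mathcal{R}_q$ are strict for infinitely many $q$; for such $q$ pick unit vectors $f_q\in\mathcal{H}_q$ and observe $\|Df_q\|=\alpha_q\to\infty$. The only mild subtlety is this last step, i.e.~checking that $\mathcal{H}_q$ is nontrivial for infinitely many $q$; otherwise the argument is routine Hilbert space spectral theory.
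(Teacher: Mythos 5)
Your proof is correct and follows essentially the same route as the paper's: $D$ is treated as a diagonal operator with real eigenvalues on the orthogonal decomposition into the subspaces $\mathcal{R}_q\cap\mathcal{R}_{q-1}^\perp$, with density supplied by the square cylinder sets of Lemma~4.1 of \cite{FGKP}. You are in fact somewhat more careful than the paper, which does not write down the maximal domain explicitly and simply asserts unboundedness from $\alpha_q\to\infty$ without verifying (as you do via the growth of $\#\Lambda^{(n,\ldots,n)}$) that infinitely many of the eigenspaces are nonzero.
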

 
 \begin{proof} 
 The fact that $D$ is unbounded follows from the hypothesis that $\lim_{q \to \infty} \alpha_q = \infty$. 
Thus, to see that $D$ is self-adjoint we must first check that it is densely defined, and then show that $D$ and  $D^*$ have the same domain.  For the first assertion, recall from Lemma 4.1 of \cite{FGKP} that 
\[ \{[\eta]: d(\eta) = (n, \ldots, n)\text{ for some }n \in \N\} \]
 generates the topology on $\Lambda^\infty$, and hence
 \[
 \text{span} \{\chi_{[\eta]}:d(\eta)=(n,n,\dots,n),\; n\in \N\}
 \]
 is dense in $L^2(\Lambda^\infty, M)$.
  Given such a ``square'' cylinder set $[\eta]$ with $d(\eta) =(s, \ldots, s)$, since $\chi_{[\eta]} \in \mathcal{R}_s$, we can write $\chi_{[\eta]} = \sum_{r\leq s} \widehat{\Xi}_{r, r-1}(\chi_{[\eta]})$.  Then, 
\[ 
D (\chi_{[\eta]}) = \sum_{r\leq s} \alpha_r \widehat{\Xi}_{r,r-1} (\chi_{[\eta]}) ,
\]
which is a finite linear combination of vectors with finite $L^2$-norm, and hence is in $L^2(\Lambda^\infty, M)$. 
 In other words, for any finite linear combination $\xi$ of characteristic functions of square cylinder sets, $D\xi$ is in $L^2(\Lambda^\infty, M)$.  Thus $D$ is defined on (at least) the finite linear combinations of square cylinder sets, which form a dense subspace of $L^2(\Lambda^\infty, M)$.
 
Moreover, our definition of $D$ as a diagonal operator on $L^2(\Lambda^\infty, M)$ with real eigenvalues implies that  $D = D^*$ formally; since the operators $D$ and $D^*$ are given by the same diagonal formula, their domains also agree, and hence we do indeed have $D = D^*$ as unbounded operators.
 \end{proof}

\begin{prop} 
Let $D$ be the operator on $L^2(\Lambda^\infty, M)$ given in \eqref{eq:S-lambda-cylinder}.      For all complex numbers $\lambda \not\in \{ \alpha_n \}_{n \in \N}$, the resolvent $R_\lambda(D) := (D - \lambda)^{-1}$ is a compact operator on $L^2(\Lambda^\infty, M)$.
      \label{pr:cpt-resolvent}
    \end{prop}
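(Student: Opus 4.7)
The strategy is to show that $D$ is a diagonalizable operator with finite-dimensional eigenspaces and eigenvalues tending to $\infty$, whence its resolvent is the norm-limit of a sequence of finite-rank operators. The starting observation is that the projections $\widehat{\Xi}_{q,q-1}$ for $q \in \N$ together with $\Xi_{-1}$ are mutually orthogonal and sum strongly to the identity on $L^2(\Lambda^\infty, M)$; this follows from the fact that the nested subspaces $\mathcal R_s$ (and the constants $\mathcal R_{-1}$) contain all characteristic functions of square cylinder sets, which by Lemma~4.1 of \cite{FGKP} densely span $L^2(\Lambda^\infty, M)$. Thus $D$ is unitarily equivalent to the diagonal operator with eigenvalue $\alpha_q$ on the range of $\widehat{\Xi}_{q,q-1}$ (and eigenvalue $0$ on $\mathcal R_{-1}$), and for $\lambda$ outside the spectrum the resolvent must be given by the formula
\[
R_\lambda(D) = -\frac{1}{\lambda}\,\Xi_{-1} \;+\; \sum_{q \in \N} \frac{1}{\alpha_q - \lambda}\,\widehat{\Xi}_{q, q-1}.
\]

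The key technical step is to verify that each $\mathcal R_s$ is finite-dimensional, so that each projection $\widehat{\Xi}_{q,q-1}$ has finite rank. By definition, $\mathcal R_s$ is spanned by the characteristic functions $\chi_{[\eta]}$ where $\eta \in \Lambda$ satisfies $d(\eta)_i \leq s$ for all $1 \leq i \leq k$. Since $\Lambda$ is finite, the set
\[
\{\eta \in \Lambda : d(\eta)_i \leq s \text{ for } 1 \leq i \leq k\} \;=\; \bigcup_{0 \leq n_1, \ldots, n_k \leq s} \Lambda^{(n_1, \ldots, n_k)}
\]
is a finite union of finite sets, so $\dim \mathcal R_s < \infty$ and hence $\widehat{\Xi}_{q,q-1}$ has finite rank for each $q$. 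I expect this to be the least difficult step, since finiteness of $\Lambda$ makes the bound on the dimension immediate.

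With these ingredients in place, the compactness follows by a standard approximation argument: set
\[
T_N := -\frac{1}{\lambda}\,\Xi_{-1} \;+\; \sum_{q=0}^{N} \frac{1}{\alpha_q - \lambda}\,\widehat{\Xi}_{q, q-1},
\]
which is a finite sum of finite-rank operators, hence finite-rank. Since the projections $\widehat{\Xi}_{q,q-1}$ are mutually orthogonal, we have
\[
\|R_\lambda(D) - T_N\| \;=\; \sup_{q > N} \frac{1}{|\alpha_q - \lambda|},
\]
and this supremum tends to $0$ as $N \to \infty$ because the hypothesis $\alpha_q \to \infty$ guarantees $|\alpha_q - \lambda|^{-1} \to 0$. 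Thus $R_\lambda(D)$ is a norm-limit of finite-rank operators on $L^2(\Lambda^\infty, M)$, and is therefore compact.

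The main conceptual subtlety, rather than an obstacle per se, is ensuring that $\lambda = 0$ is handled correctly: since $D$ vanishes on the one-dimensional subspace $\mathcal R_{-1}$, one should either restrict to $\lambda \notin \{\alpha_n\} \cup \{0\}$ or reinterpret $\{\alpha_n\}_{n \in \N}$ as including $0$ implicitly. In either case the argument above produces compactness, since the corresponding summand $-\lambda^{-1}\Xi_{-1}$ is rank one and absorbed into $T_N$.
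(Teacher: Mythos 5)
Your proof is correct and follows essentially the same route as the paper's: diagonalize $D$ over the mutually orthogonal subspaces $\mathcal R_q\cap\mathcal R_{q-1}^\perp$, write the resolvent as $\sum_q (\alpha_q-\lambda)^{-1}\widehat\Xi_{q,q-1}$, and show it is a norm limit of the finite-rank partial sums because $\alpha_q\to\infty$. You are in fact somewhat more careful than the paper, which does not explicitly verify that each $\mathcal R_s$ is finite-dimensional nor address the kernel $\mathcal R_{-1}$ (so that $\lambda=0$ must be excluded); both of your added observations are correct and worth making.
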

\begin{proof}
By definition, $D$ is given by multiplication by $\alpha_q$ on $\mathcal{R}_{q}\cap \mathcal{R}_{(q-1)}^\perp$.  Consequently, for all $q 
\in \N$, $(D- \lambda)^{-1}$ is given by multiplication by $\frac{1}{\alpha_q - \lambda}$ on $\mathcal{R}_{q}\cap \mathcal{R}_{(q-1)}^\perp$.

Since $\lambda \not \in \{\alpha_n\}_{n\in \N}$ and $\lim_{n\to \infty} \alpha_n=\infty$, given $\epsilon > 0 ,$ we can choose $N$ so that for all $n \geq N$, $\frac{1}{|\lambda - \alpha_n|} < \epsilon$.  Fix $s\in \N$, then for any $f \in \mathcal{R}_s \cap \mathcal{R}_{s-1}^\perp$ of norm 1, 
\begin{align*}
 \| \left( \sum_{q=1}^N \frac{1}{\alpha_q - \lambda}\widehat \Xi_{q, q-1} (f) \right)& - (D-\lambda)^{-1}(f) \|  
= \| \sum_{q > N} \frac{1}{\alpha_q - \lambda} \widehat{\Xi}_{q, q-1}(f) \|  \\
&= \begin{cases}
\left| \frac{1}{\alpha_s - \lambda} \right| \, \|f\| & \text{ if } s > N \\
0 & \text{ if } s \leq N
\end{cases} \\
& < \epsilon,
\end{align*}
since $\|f \| = 1$ by hypothesis.  Since the subspaces $\{  \mathcal{R}_s \cap \mathcal{R}_{s-1}^\perp: s \in \N_0\}$ span $L^2(\Lambda^\infty, M)$,
it follows that $(D - \lambda)^{-1}$ is the norm limit of finite rank operators and hence is compact. 
\end{proof}

According to Proposition~3.4 and Theorem~3.5 of \cite{FGKP}, there is a separable representation $\pi$ of $C^*(\Lambda)$ on $L^2(\Lambda^\infty, M)$ when $\Lambda$ is a finite, strongly connected $k$-graph. We will prove in Theorem \ref{thm-Consani-Marcolli-spectral-triples-k-graphs} below that this representation makes $(\mathcal{A}_\Lambda, L^2(\Lambda^\infty, M), D)$ into a spectral triple; recall that  $\mathcal{A}_\Lambda$ is the dense $*$-subalgebra of $C^*(\Lambda)$ spanned by $\{s_\lambda s_\eta^*: \lambda, \eta \in \Lambda\}$.

{Before stating Theorem \ref{thm-Consani-Marcolli-spectral-triples-k-graphs}, we review the definition of the representation $\pi$.
For $p\in \N^k$ and $\lambda\in \Lambda$, let $\sigma^p$ and $\sigma_\lambda$ be the shift map and prefixing map given in Remark~\ref{rmk:S2_shift}(b).
If we let $S_\lambda:=\pi(s_\lambda)$, the image of the standard generator $s_\lambda$ of $C^*(\Lambda)$, then Theorem 3.5 of \cite{FGKP} tells us that $S_\lambda$ is given on characteristic functions of cylinder sets by
\begin{equation}\label{eq:S_lambda}
\begin{split}
S_\lambda\chi_{[\eta]} (x) &=\chi_{[\lambda]}(x)\rho(\Lambda)^{d(\lambda)/2}\chi_{[\eta]}(\sigma^{d(\lambda)}(x))\\
     &=\begin{cases} \rho(\Lambda)^{d(\lambda)/2}\quad \text{if $x=\lambda\eta y$ for some $y \in \Lambda^\infty$}\\ 
     0 \quad\quad\quad \text{otherwise}\end{cases}\\
     &= \rho(\Lambda)^{d(\lambda)/2} \chi_{[\lambda \eta]}(x).
\end{split}
\end{equation}

   Moreover, the  adjoint $S^*_\lambda$ of  $S_\lambda$  is given on characteristic functions of cylinder sets by
    \begin{equation}
\label{eq:S-lambda-star}    
    \begin{split}
    S^*_\lambda \chi_{[\eta]}(x) &=\chi_{[s(\lambda)]}(x) \rho(\Lambda)^{-d(\lambda)/2}\chi_{[\eta]}(\sigma_\lambda(x))\\
    &=\begin{cases} \rho(\Lambda)^{-d(\lambda)/2}\quad\text{if $\lambda x=\eta y$ for some $y\in \Lambda^\infty$}\\ 0 \quad\quad\quad\text{otherwise}\end{cases}\\
    &= \rho(\Lambda)^{-d(\lambda)/2}\sum_{(\zeta, \xi) \in \Lambda^{min}(\lambda, \eta)} \chi_{[\zeta]}(x).
   \end{split}\end{equation}
 }

 \begin{thm} \label{thm-Consani-Marcolli-spectral-triples-k-graphs} 
 Let $\Lambda$ be a finite, strongly connected $k$-graph, and denote by $\pi$ the representation of $C^*(\Lambda)$ on $L^2(\Lambda^\infty, M)$ given by Proposition 3.4 and Theorem 3.5 of \cite{FGKP}. Let $\mathcal{A}_\Lambda$ be the dense $\ast$-subalgebra of $C^*(\Lambda)$ given in Section~\ref{subsection:k-graph} and let $D$ be the operator given in \eqref{eq:Dirac}.
 If {there exists a constant $C\ge 0$ such that} the sequence $\alpha= \{\alpha_q\}_{q \in \N_0}$ satisfies 
 \[
| \alpha_{q+1} - \alpha_q | \leq C,\ \forall q \in \N_0, 
 \] 
 then the commutator $[D,\pi(a)]$ is a bounded operator on $L^2(\Lambda^\infty, M)$ for any $a \in \mathcal{A}_\Lambda$.
 
 Combined with the above results, this implies that the data $(\mathcal{A}_\Lambda,  L^2(\Lambda^\infty, M), D)$ gives a spectral triple for $C^*(\Lambda)$. 
 \end{thm}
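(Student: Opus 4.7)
The plan is to show that $[D, S_\lambda]$ extends to a bounded operator on $L^2(\Lambda^\infty, M)$ for every $\lambda \in \Lambda$. Since $\mathcal{A}_\Lambda$ is linearly spanned by $\{s_\lambda s_\eta^*\}$, combining the derivation property of $[D, \cdot]$ with the identity $[D, S_\eta^*] = -[D, S_\eta]^*$ (which uses $D^* = D$ from Proposition~\ref{pr:dirac-self-adjoint}) reduces the theorem to this single claim; the remaining spectral triple conditions then follow from Proposition~\ref{pr:cpt-resolvent} and the representation supplied by Theorem~3.5 of \cite{FGKP}.

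The first step will be to identify the spectral subspaces of $D$ with the wavelet subspaces of Section~\ref{sec:k-wavelet}: both $\mathcal{R}_s$ and $\mathscr{V}_s$ are spanned by $\{\chi_{[\mu]} : d(\mu) = (s, \ldots, s)\}$, so $\mathcal{R}_s = \mathscr{V}_s$, and hence $\widehat{\Xi}_{q, q-1}$ projects onto $\mathcal{W}_{q-1}$ for $q \geq 1$ and onto $\mathscr{V}_0 \cap \mathcal{R}_{-1}^\perp$ for $q = 0$. This makes $D$ diagonal in the refined wavelet decomposition $L^2(\Lambda^\infty, M) = \mathcal{R}_{-1} \oplus (\mathscr{V}_0 \cap \mathcal{R}_{-1}^\perp) \oplus \bigoplus_{n \geq 0} \mathcal{W}_n$, with eigenvalues $0, \alpha_0, \alpha_1, \alpha_2, \ldots$ on the respective summands.

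The technical heart of the argument will be the following lemma: setting $M(\lambda) := \max_i d(\lambda)_i$ and $m(\lambda) := \min_i d(\lambda)_i$, for every $n \geq 1$ and $\xi \in \mathcal{W}_{n-1}$ one has $S_\lambda \xi \in \bigoplus_{j = m(\lambda)}^{M(\lambda)} \mathcal{W}_{n + j - 1}$. To prove it, we would test $\langle S_\lambda \xi, \chi_{[\mu]} \rangle = \langle \xi, S_\lambda^* \chi_{[\mu]} \rangle$ against $\mu$ with $d(\mu) = (s, \ldots, s)$; formula~\eqref{eq:S-lambda-star} writes $S_\lambda^* \chi_{[\mu]}$ as a sum of $\chi_{[\zeta]}$ over $(\zeta, \xi') \in \Lambda^{\min}(\lambda, \mu)$, and the factorization property forces $d(\zeta)_i = (s - d(\lambda)_i)_+$, whence $\max_i d(\zeta)_i = (s - m(\lambda))_+$. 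This will place $S_\lambda^* \chi_{[\mu]}$ inside $\mathscr{V}_{n-1}$ whenever $s \leq n + m(\lambda) - 1$, and then orthogonality of $\xi$ to $\mathscr{V}_{n-1}$ gives $\langle S_\lambda \xi, \chi_{[\mu]} \rangle = 0$; the upper containment $S_\lambda \xi \in \mathscr{V}_{n + M(\lambda)}$ is immediate from \eqref{eq:S_lambda}.

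Granted the lemma, for $\xi \in \mathcal{W}_{n-1}$ we may expand $[D, S_\lambda] \xi = \sum_{j = m(\lambda)}^{M(\lambda)} (\alpha_{n+j} - \alpha_n) P_{\mathcal{W}_{n+j-1}}(S_\lambda \xi)$; the Lipschitz bound $|\alpha_{n+j} - \alpha_n| \leq C M(\lambda)$, the mutual orthogonality of the $\mathcal{W}_{n+j-1}$, and the partial-isometry bound $\|S_\lambda\| \leq 1$ then yield $\|[D, S_\lambda] \xi\| \leq C M(\lambda) \|\xi\|$. For a general $\xi = \xi_0 + \sum_{n \geq 1} \xi_n^W$ with $\xi_0 \in \mathscr{V}_0$ and $\xi_n^W \in \mathcal{W}_{n-1}$, we would re-index the contributions to each fixed $\mathcal{W}_N$-component coming from the nearby $\xi_n^W$ and apply Cauchy-Schwarz, producing a global estimate of order $C M(\lambda)(M(\lambda) - m(\lambda) + 1)\|\xi\|$; the finite-dimensional $\mathscr{V}_0$ piece is dispatched directly by noting that $S_\lambda \mathscr{V}_0 \subset \mathscr{V}_{M(\lambda)}$, on which $D$ is bounded by $\alpha_{M(\lambda)}$. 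The hard part will be the Key Lemma, where the combinatorial structure of $\Lambda^{\min}(\lambda, \mu)$ provided by the $k$-graph factorization property is essential and where most of the work in generalizing the Consani-Marcolli argument from directed graphs to higher-rank graphs resides.
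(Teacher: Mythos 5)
Your proposal is correct, and its core mechanism is the same as the paper's: the operators $S_\lambda$ and $S_\lambda^*$ shift the filtration $\{\mathcal{R}_s\}$ (and its orthocomplements) by at most $\max_i d(\lambda)_i$ and at least $\min_i d(\lambda)_i$, so each generator maps a $D$-eigenspace into a band of eigenspaces of width controlled by $d(\lambda)$, and the Lipschitz condition $|\alpha_{q+1}-\alpha_q|\le C$ then bounds the commutator. Where you diverge is in organization, and in two places your route is genuinely tidier. First, you reduce to bounding $[D,S_\lambda]$ for a single generator via the derivation property and $[D,S_\eta^*]=-[D,S_\eta]^*$; the paper instead works directly with $S_\lambda S_\mu^*$ and must split into three cases according to whether $q<\min_\mu$, $\min_\mu\le q\le \max_\mu$, or $q>\max_\mu$, precisely because $S_\mu^*$ can collapse $\mathcal{R}_q\cap\mathcal{R}_{q-1}^\perp$ into $\mathcal{R}_0$ without retaining any orthogonality information. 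Your reduction avoids that case analysis entirely (just make sure the adjoint identity is justified on the common invariant dense domain of finite linear combinations of characteristic functions, which it is since $D$ is diagonal there and $S_\lambda,S_\lambda^*$ preserve that span). Second, your final Cauchy--Schwarz/band-width step, passing from a uniform bound on each summand $\mathcal{W}_{n-1}$ to a bound on all of $L^2(\Lambda^\infty,M)$, fills in a step the paper's proof leaves implicit: the paper concludes boundedness from the bound on each $\mathcal{R}_q\cap\mathcal{R}_{q-1}^\perp$ with only the remark that these subspaces densely span, which is not by itself sufficient since $[D,S_\lambda S_\mu^*]$ does not map distinct summands to orthogonal subspaces; the overlap is a finite band, and your re-indexing argument is exactly the missing justification. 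Your identification $\mathcal{R}_s=\mathscr{V}_s$ and the resulting wavelet interpretation of the eigenspaces is correct (the paper defers it to Theorem \ref{thm:CM-Dirac-wavelets}), and your Key Lemma, including the computation $d(\zeta)_i=\max(s-d(\lambda)_i,0)$ from $\Lambda^{\min}(\lambda,\mu)$, matches the paper's mapping properties of $S_\lambda^*$. In short: same engine, cleaner chassis.
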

\begin{proof}
To prove that $(\mathcal{A}_\Lambda, L^2(\Lambda^\infty, M), D)$ is a spectral triple we need to show that $D$ is self-adjoint, $(D^2+1)^{-1}$ is compact and $[D,\pi(a)]$ is bounded for all $a \in \mathcal{A}_\Lambda$. The first statement is the content of Proposition \ref{pr:dirac-self-adjoint}, and the second follows from Proposition \ref{pr:cpt-resolvent}, thanks to the fact that $\pm i \not \in \{ \alpha_n\}_{n\in \N}$ and hence $(D \pm i )^{-1}$ is compact. 
Thus, to complete the proof of the Theorem, we will now show that $[D, \pi(a)]$ is bounded for all finite linear combinations $a = \sum_{i \in F } c_i s_{\lambda_i} s_{\eta_i}^*\in \mathcal{A}_\Lambda$, {where $c_i\in \C$.}

Given $\lambda \in \Lambda$, write $\max_\lambda = \max_j \{ d(\lambda)_j\}$   and $\min_\lambda = \min_j \{d(\lambda)_j\}$. Then the formula \eqref{eq:S_lambda} implies immediately that, for any fixed $s\in \N$,
 the operator  $S_\lambda$ on $L^2(\Lambda^\infty, M)$ takes $\mathcal{R}_s$ to $\mathcal{R}_{s+\max_\lambda}$.
      
Moreover, Equation \eqref{eq:S-lambda-star} implies that the operator $S_\lambda^*$ on $L^2(\Lambda^\infty, M)$ takes $\mathcal{R}_s$ to $\mathcal{R}_{s-\min_\lambda}$ if $\min_\lambda \leq s$, and to $\mathcal{R}_0$ otherwise.

To see this, suppose $\chi_{[\eta]} \in \mathcal{R}_s$ and $d(\eta) = (n_1, \ldots, n_k)$. 
Then $S_\lambda^* \chi_{[\eta]}$ is a linear combination of cylinder sets $\chi_{[\zeta]}$ with 
\[d(\zeta)_i = \begin{cases} 0, & d(\lambda)_i  \geq d(\eta)_i \\
d(\eta)_i - d(\lambda)_i, & d(\lambda)_i < d(\eta)_i
\end{cases}\]
Consequently, we see that (as desired) 
     \begin{align*}
     \max\{ d(\zeta)_i \} & = \max\{0,  n_i - d(\lambda)_i: 1 \leq i \leq k \}  \leq s - \textstyle{\min_\lambda}. \end{align*}
If $s<\min_\lambda$, then $n_i - d(\lambda)_i \leq 0$ for all $i$, so  $S^*_\lambda \chi_{[\eta]}\in \mathcal{R}_0$ for all $\chi_{[\eta]}\in \mathcal{R}_s$.
      
  Similarly,  if $f \in \mathcal{R}_s^\perp$, then $S_\lambda f \in \mathcal{R}_{s + \min_\lambda}^\perp$.  
      This follows from the fact that $\langle S_\lambda f, g \rangle = \langle f, S_\lambda^*g\rangle $ and the fact that $S_\lambda^*$ takes $\mathcal{R}_r$ to $\mathcal{R}_{r -\min_\lambda}$.  Thus, if $\langle f, h \rangle = 0$ for all $h \in \mathcal{R}_s$, then in particular 
      \[ \langle f, S_\lambda^* g\rangle = 0 \ \forall \ g \in \mathcal{R}_{s + \min_\lambda}.\]
An analogous argument, using the fact that $S_\lambda$ takes $\mathcal{R}_s$ to $\mathcal{R}_{s+\max_\lambda}$, shows that  $S_\lambda^*$  takes $\mathcal{R}_s^\perp$ to $\mathcal{R}_{s-\max_\lambda}^\perp$ if $s \geq \max_\lambda$. 
 
Now fix $q \in \N, \ f\in \mathcal{R}_q \cap \mathcal{R}_{q-1}^{\perp}$,  and fix $\lambda, \mu \in \Lambda$ with $s(\lambda) = s(\mu)$. 
We use  the reasoning of the previous paragraphs to identify the subspaces $\mathcal{R}_s, \mathcal{R}_t^\perp$ which contain  $S_\lambda S_\mu^* f$.

  If $ \max_\mu > q$, then we cannot say that $S_\mu^* f$ is orthogonal to any $\mathcal{R}_t$; we cannot guarantee that $\langle f, S_\mu \xi \rangle = 0$ for any function $\xi$, since even if $\xi \in \mathcal{R}_{-1}$, $S_\mu \xi$ will lie in $\mathcal{R}_{\max_\mu-1}$, which space strictly contains $\mathcal{R}_{q-1}$. (In general, $S_\mu \xi$ will lie in  some subspace containing $\mathcal{R}_{\max_\mu-1}$.) 
Moreover, if $q < \min_\mu$, then $S_\mu^* f\in \mathcal{R}_0$.  Thus,  
  \[ q < \textstyle{\min_\mu} \Rightarrow S_\lambda S_\mu^* f \in \mathcal{R}_{\max_\lambda}; \quad \textstyle{\min_\mu} \leq q \leq  \max_\mu \Rightarrow S_\lambda S_\mu^* f \in\mathcal{R}_{q + \max_\lambda - \min_\mu}; \]
  \[ q > \textstyle{\max_\mu} \Rightarrow S_\lambda S_\mu^* f \in \mathcal{R}_{q + \max_\lambda - \min_\mu} \cap \mathcal{R}_{(q-1) + \min_\lambda - \max_\mu}^\perp.\]

For now, assume $q > \max_\mu$.  Writing $g= S_\lambda S_\mu^*  f$,
  we have 
 \begin{align*} 
 g & = \Big( \Xi_{q+\max_\lambda - \min_\mu }-\Xi_{(q-1)+\min_\lambda - \max_\mu} \Big) g \\
 &=\sum_{w=q+\min_\lambda - \max_\mu }^{q+\max_\lambda - \min_\mu }  \Big( \Xi_{w}-\Xi_{w-1} \Big) g
\end{align*}
and consequently 
  $$D g= \sum_{w=q+\min_\lambda - \max_\mu }^{q+\max_\lambda - \min_\mu } D \Big( \Big( \Xi_{w}-\Xi_{w-1} \Big) g \Big) = \sum_{w=q+\min_\lambda-\max_\mu}^{q+\max_\lambda-\min_\mu} \, \alpha_w\,  \Big( \Big( \Xi_{w}-\Xi_{w-1} \Big) g \Big). $$ 
 It now follows that (still assuming $q > \max_\mu$)
    \[
   [D, S_\lambda S_\mu^* ] f= DS_\lambda S_\mu^* f - S_\lambda S_\mu^* D f =  \sum_{w=q+\min_\lambda - \max_\mu }^{q+\max_\lambda - \min_\mu }  \, ( \alpha_w - \alpha_q) \Big( \Big( \Xi_{w}-\Xi_{w-1} \Big) S_\lambda S_\mu^* f \Big)  .
    \]
 Consequently,  since $|\alpha_w - \alpha_{w-1}| \leq C$ for all $w$, 
    \[
    \begin{split}
      \Vert  [D, S_\lambda S_\mu^* ] f \Vert &\leq   \sum_{w=q+\min_\lambda - \max_\mu }^{q+\max_\lambda - \min_\mu }  \, |\alpha_w-\alpha_q|\,  \Vert  S_\lambda S_\mu^* f \Vert  \\ 
        &\leq \|S_\lambda S_\mu^* f\|  \sum_{w= q+ \min_\lambda - \max_\mu}^{q + \max_\lambda - \min_\mu} C |w-q|\\
      & = \|S_\lambda S_\mu^*  f\|  C \sum_{t = \min_\lambda - \max_\mu }^{\max_\lambda - \min_\mu } | t|  .
     \end{split}
        \]
  Since $S_\lambda S_\mu^*$ is a partial isometry and hence norm-preserving,  
  whenever $f \in \mathcal R_q \cap \mathcal R_{q-1}^\perp$ for $q > \max_\mu$,  $\| [D, S_\lambda S_\mu^*] f\|$ is bounded above by a constant which depends only on $\lambda$ and $\mu$.

  If we have $\min_\mu \leq q \leq \max_\mu$, since we no longer know that $S_\lambda S_\mu^* f \in \mathcal{R}_t$ for any $t$, in calculating $\| [D, S_\lambda S_\mu^* f\ \|$ we have to begin our summation over $w$ at zero, rather than at $q +  \min_\lambda - \max_\mu$. 
  In this case, the final (in)equality above becomes 
  \[ \| [D, S_\lambda S_\mu^*] f \| \leq \sum_{t = 1}^{\max_\lambda - \min_\mu} C t \|S_\lambda S_\mu^* f\|  + \sum_{t=1}^q  C t \| S_\lambda S_\mu^* f \|.\]
 In this case, $q \leq \max_\mu$, so we obtain the norm bound 
  \begin{align*} \| [D, S_\lambda S_\mu^*] f \| & \leq \| S_\lambda S_\mu^* f\| C \left( \frac{(\max_\lambda - \min_\mu)(\max_\lambda - \min_\mu +1)}{2} + \frac{\max_\mu (\max_\mu +1)}{2} \right) .
  \end{align*}
  In other words, $\|[D, S_\lambda S_\mu^*] f\|$ is again bounded by a constant which only depends on $\lambda$ and $\mu$.  A similar argument shows that if $q < \min_\mu$, $\Vert [D,S_\lambda S^*_\mu] f\Vert$ is bounded by a constant which only depends on $\lambda$ and $\mu$.  Since $\{\mathcal R_q \}_q$ densely spans $L^2(\Lambda^\infty, M)$, it follows that $[D, S_\lambda S_\mu^* ]$ is a bounded operator for all $(\lambda, \mu) \in \Lambda \times \Lambda$ with $s(\lambda) = s(\mu)$.
   
By linearity, it follows  that $[D, \pi(a)]$ is bounded for all finite linear combinations $a = \sum_{i \in F} c_i s_{\lambda_i} s_{\eta_i}^*$ of the generators $s_\lambda s_\eta^*$ of $C^*(\Lambda)$. Since every element of the dense $*$-subalgebra $\mathcal{A}_\Lambda$ of $C^*(\Lambda)$ is given by such a finite linear combination, it follows that $(\mathcal{A}_\Lambda, L^2(\Lambda^\infty, M), D)$ is a spectral triple, as claimed.
 \end{proof}

 \begin{thm}
 \label{thm:CM-Dirac-wavelets}
 Let $(\mathcal{A}_\Lambda, L^2(\Lambda^\infty, M), D)$ be the spectral triple described in Theorem~\ref{thm-Consani-Marcolli-spectral-triples-k-graphs}.  Then the eigenspaces of the Dirac operator agree with the wavelet decomposition 
 \[L^2(\Lambda^\infty, M) = \mathscr{V}_0 \oplus \bigoplus_{q=0}^\infty \mathcal{W}_q\]
 of Theorem 4.5 of \cite{FGKP};  we have $\mathscr{V}_0 = \mathcal{R}_0 \oplus \mathcal{R}_{-1}$ and 
 \[ \mathcal{W}_q = \mathcal{R}_{q+1}  \cap \mathcal{R}_{q}^\perp.\]
 \end{thm}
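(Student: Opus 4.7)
The plan is to show that the filtration $\{\mathcal R_s\}_{s\ge 0}$ appearing in the definition of the Dirac operator $D$ coincides with the ``square cylinder'' filtration $\{\mathscr V_s\}_{s\ge 0}$ used in Section~\ref{sec-wavelets-as-eigenfunctions} to build the wavelets. Once this identification is in hand, the theorem reduces to a matter of bookkeeping: by construction $D$ acts as the scalar $\alpha_q$ on each piece $\mathcal R_q\cap\mathcal R_{q-1}^\perp$ (and as $0$ on $\mathcal R_{-1}$), while the wavelet subspace $\mathcal W_n$ was defined to be $\mathscr V_{n+1}\cap\mathscr V_n^\perp$.

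First I would prove that $\mathcal R_s=\mathscr V_s$ for every $s\in\N$. The inclusion $\mathscr V_s\subseteq\mathcal R_s$ is immediate from the definitions, since any $\lambda$ with $d(\lambda)=(s,\ldots,s)$ satisfies $\max_i d(\lambda)_i=s$. For the reverse inclusion, given a cylinder set $[\eta]$ with $\max_i d(\eta)_i\le s$, I would set $p:=(s,\ldots,s)-d(\eta)\in\N^k$ and invoke the factorization property (together with the hypothesis that $\Lambda$ has no sources, so that $s(\eta)\Lambda^p\ne\emptyset$) to obtain the disjoint refinement
\[
[\eta]=\bigsqcup_{\alpha\in s(\eta)\Lambda^p}[\eta\alpha],
\]
in which every $\eta\alpha$ has homogeneous degree $(s,\ldots,s)$. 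Passing to characteristic functions gives $\chi_{[\eta]}=\sum_\alpha\chi_{[\eta\alpha]}\in\mathscr V_s$, completing the identification.

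With $\mathcal R_s=\mathscr V_s$ in hand, the two orthogonal decompositions of $L^2(\Lambda^\infty,M)$ match term by term. For $q\ge 0$, Proposition~\ref{prop:S_n} gives $\mathcal W_q=\mathscr V_{q+1}\cap\mathscr V_q^\perp=\mathcal R_{q+1}\cap\mathcal R_q^\perp$, which is exactly the range of $\widehat\Xi_{q+1,q}$ and hence the $\alpha_{q+1}$-eigenspace of $D$. At the bottom of the filtration, $\mathscr V_0=\mathcal R_0$, and since $\mathcal R_{-1}\subseteq\mathcal R_0$ we obtain the orthogonal splitting
\[
\mathscr V_0 \;=\; \mathcal R_{-1}\,\oplus\,\bigl(\mathcal R_0\cap\mathcal R_{-1}^\perp\bigr),
\]
which is the sum of the kernel of $D$ and its $\alpha_0$-eigenspace. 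This is the intended reading of the equality $\mathscr V_0=\mathcal R_0\oplus\mathcal R_{-1}$ in the theorem statement.

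I do not anticipate any serious obstacle: the factorization-refinement argument is standard, and once $\mathcal R_s=\mathscr V_s$ is established the remainder of the proof is purely notational. The only mild subtlety is interpreting the equality for $\mathscr V_0$ carefully, so that the term ``$\mathcal R_0$'' on the right-hand side is understood as the orthogonal complement $\mathcal R_0\cap\mathcal R_{-1}^\perp$ of the constants inside $\mathcal R_0$.
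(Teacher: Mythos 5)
Your proposal is correct, but it organizes the argument differently from the paper, and in a way that is arguably cleaner. The paper never explicitly proves $\mathcal R_s=\mathscr V_s$; instead it handles the bottom of the filtration by matching $\mathcal R_{-1}$ and $\mathcal R_0\cap\mathcal R_{-1}^\perp$ with the eigenspaces $E_{-1}$ and $E_0$ from Theorem \ref{thm:wavelet-k}, and for $q\geq 0$ it reduces to the case $q=0$ by observing that $S_\lambda$ with $d(\lambda)=(q,\ldots,q)$ maps $\mathcal R_s\cap\mathcal R_{s-1}^\perp$ into $\mathcal R_{s+q}\cap\mathcal R_{s+q-1}^\perp$ (since $\max_\lambda=\min_\lambda$ for such $\lambda$), then shows $\mathcal W_0\subseteq\mathcal R_1\cap\mathcal R_0^\perp$ directly from the construction of the mother wavelets $f^{i,v}$, and closes the gap with the dimension count from Theorem \ref{thm:wavelet-k}. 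Your route — establishing $\mathcal R_s=\mathscr V_s$ once and for all via the factorization refinement $[\eta]=\bigsqcup_{\alpha\in s(\eta)\Lambda^p}[\eta\alpha]$ with $p=(s,\ldots,s)-d(\eta)$, and then reading off $\mathcal W_q=\mathscr V_{q+1}\cap\mathscr V_q^\perp=\mathcal R_{q+1}\cap\mathcal R_q^\perp$ from the definitions — bypasses both the $S_\lambda$ intertwining step and the dimension count. It is worth noting that the paper's final sentence implicitly relies on exactly your lemma (it asserts without proof that $\mathcal R_s$ is spanned by the square cylinder functions $\chi_{[\lambda]}$ with $d(\lambda)=(s,\ldots,s)$ in order for the dimension count to apply), so you have made explicit a fact the paper glosses over. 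You also correctly flag that the displayed equality $\mathscr V_0=\mathcal R_0\oplus\mathcal R_{-1}$ must be read with $\mathcal R_0$ replaced by $\mathcal R_0\cap\mathcal R_{-1}^\perp$, since $\mathcal R_{-1}\subseteq\mathcal R_0$; this matches the paper's intended meaning via $E_{-1}\oplus E_0$.
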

 \begin{proof}
 By definition, $\mathcal{R}_0 = E_0$ and $\mathcal{R}_{-1} = E_{-1}$; hence the fact that $E_0 \oplus E_{-1} \cong \mathscr{V}_0$, as we established in Theorem \ref{thm:wavelet-k}, gives the first assertion.  
 
 For the second, 
recall that $\mathcal{W}_q = \text{span}\,\{ S_\lambda f: f \in \mathcal{W}_0, \ d(\lambda) = (q, q, \ldots, q)\}$.  Consequently, since $\max\{d(\lambda)_i\} = \min \{ d(\lambda)_i\}$ for all such $\lambda$, each such $S_\lambda$ takes $\mathcal{R}_s \cap \mathcal{R}_{s-1}^\perp$ to $\mathcal{R}_{s+q} \cap \mathcal{R}_{s+q-1}^\perp$.  To prove the Proposition, then, it suffices to show that 
 \[ \mathcal{W}_0 = \mathcal{R}_1 \cap \mathcal{R}_0^\perp.\]

 However, again, we recall that $\mathcal{W}_0$ was constructed precisely to be the span of a family $\{f^{m,v}\}$ of functions which were orthogonal to $\mathscr{V}_0 \supseteq \mathcal{R}_0$; and every function $f^{m,v}$ was a linear combination of characteristic functions $\chi_\eta$ with $d(\eta) = (1, \ldots, 1)$.  Since  $\mathcal{R}_1$ consists of linear combinations of such characteristic functions, it follows that $\mathcal{W}_0 \subseteq \mathcal{R}_1 \cap \mathcal{R}_0^\perp$. 
 
 Since $\mathcal{W}_0$ has dimension 
 \[
 {\sum_{v \in \Lambda^0} \#( v \Lambda^{(1, \ldots, 1)})-1 = \#(\Lambda^{(1, \ldots, 1)}) - \#(\Lambda^0), }
 \]

  and $\mathcal{R}_s$ is densely spanned by $\{ \chi_{[\lambda]}: d(\lambda) = (s, \ldots, s) \}$,  the dimension counting argument used in the proof of Theorem~\ref{thm:wavelet-k} gives the desired result.
 \end{proof}
 
 \begin{rmk} \label{rmk:CM-rectangle-wavelets}
 Fix $J \in \N^k$ with $J_i > 0 $ for all $i$.  We described in Section 5 of \cite{FGKP2} how to construct wavelets with ``fundamental domain'' $J$ -- the original construction in Section 4 of \cite{FGKP} used $J=(1, \ldots, 1)$.  By defining 
 \[ \widetilde{\mathcal{R}}_s = \text{span} \{ \chi_{[\eta]}: d(\eta) \leq sJ\}\]
 we can construct a Dirac operator $\widetilde{D}$ on $L^2(\Lambda^\infty, M)$ which gives rise to a spectral triple $(\mathcal{A}_\Lambda, L^2(\Lambda^\infty, M), \widetilde D)$ whose eigenspaces agree with the wavelet decomposition given in Theorem 5.2 of \cite{FGKP2}.  We omit the details here as they are completely analogous to the proofs of Theorems \ref{thm-Consani-Marcolli-spectral-triples-k-graphs} and \ref{thm:CM-Dirac-wavelets} above.
 \end{rmk}

\bibliographystyle{amsplain}
\bibliography{eagbib}

\end{document}